\numberwithin{equation}{section}
\newcommand{\ie}{{\em i.e.}~}
\newcommand{\dd}{\mathrm{d}}
\newcommand{\R}{\mathbb{R}}
\newcommand{\Exp}{\mathbb{E}}
\newcommand{\ind}[1]{\mathds{1}_{\{#1\}}}
\newcommand{\sgn}{\mathrm{sgn}}
\newcommand{\grandO}{\mathcal{O}}
\newcommand{\Mu}{\mathrm{M}}
\newcommand{\revision}[1]{#1}
\newcommand{\bu}{\mathbf{u}}
\newcommand{\bv}{\mathbf{v}}
\newcommand{\bm}{\mathbf{m}}
\newcommand{\rx}{\mathrm{x}}
\newcommand{\ry}{\mathrm{y}}
\newcommand{\rblambda}{\bar{\lambda}}
\newcommand{\rbmu}{\bar{\mu}}
\newcommand{\Ls}{\mathrm{L}} 
\newcommand{\Cs}{\mathrm{C}} 
\newcommand{\Ps}{\mathrm{P}} 
\newcommand{\Ws}{\mathrm{W}} 
\newcommand{\Unif}{\mathrm{U}} 
\newcommand{\ConstUSH}{L_{\mathrm{USH}}} 
\newcommand{\ConstBoundS}{L_{\mathrm{C}}} 
\newcommand{\ConstLip}{L_{\mathrm{LC}}} 
\newcommand{\ConstStab}{\mathcal{L}} 
\newcommand{\x}{\mathbf{x}}
\newcommand{\y}{\mathbf{y}}
\newcommand{\z}{\mathbf{z}}
\newcommand{\Part}{{P_n^d}} 
\newcommand{\Rb}{\mathrm{R}} 
\newcommand{\Nb}{\mathrm{N}} 
\newcommand{\tPhi}{\tilde{\Phi}} 
\newcommand{\tinter}{\tau^{\mathrm{coll}}} 
\newcommand{\bblambda}{\bar{\boldsymbol{\lambda}}} 
\newcommand{\tlambda}{\tilde{\lambda}} 
\newcommand{\tblambda}{\tilde{\boldsymbol{\lambda}}} 
\DeclareMathOperator{\argsinh}{argsinh}
\renewcommand{\bar}[1]{\overline{#1}}
\newtheorem{defi}{Definition}[section]
\newtheorem{lem}[defi]{Lemma}
\newtheorem{prop}[defi]{Proposition}
\newtheorem{theo}[defi]{Theorem}
\newtheorem{cor}[defi]{Corollary}
\newtheoremstyle{myremark}{}{}{}{0pt}{\bfseries}{.}{ }{}
\theoremstyle{myremark}
\newtheorem{rk}[defi]{Remark}
\title[Optimal convergence rate of the multitype sticky particle dynamics]{Optimal convergence rate of the multitype sticky particle approximation of one-dimensional diagonal hyperbolic systems with monotonic initial data}
\author{Benjamin Jourdain}
\address{{\bf Benjamin Jourdain}\newline
{\rm \indent Universit\'e Paris-Est, CERMICS (ENPC), INRIA, F-77455 Marne-la-Vall\'ee.}}
\email{\href{mailto:jourdain@cermics.enpc.fr}{jourdain@cermics.enpc.fr}}
\author{Julien Reygner}
\address{{\bf Julien Reygner}\newline
{\rm \indent Universit\'e Paris-Est, CERMICS (ENPC), F-77455 Marne-la-Vall\'ee.}}
\email{\href{mailto:julien.reygner@enpc.fr}{julien.reygner@enpc.fr}}
\thanks{This research benefited from the support of the French National Research Agency (ANR) under the program ANR-12-BLAN Stab.}
\keywords{Multitype sticky particle dynamics; hyperbolic systems; rate of convergence.}
\subjclass[2010]{35L45; 65M12; 82C21.}
\def\part{\@startsection{part}{1}%
\z@{.7\linespacing\@plus\linespacing}{.5\linespacing}%
{\large\normalfont\bfseries}}
\def\section{\@startsection{section}{1}%
\z@{.7\linespacing\@plus\linespacing}{.5\linespacing}%
{\normalfont\bfseries\centering}}
\def\@settitle{\begin{center}%
  \baselineskip14\p@\relax
    \bfseries
    \LARGE\@title
  \end{center}%
}
\def\@setauthors{%
  \begingroup
  \trivlist
  \centering\footnotesize \@topsep30\p@\relax
  \advance\@topsep by -\baselineskip
  \item\relax
  \andify\authors
  \def\\{\protect\linebreak}%
 {\Large\authors}%
  \endtrivlist
  \endgroup
}
\def\maketitle{\par
  \@topnum\z@ 
  \@setcopyright
  \thispagestyle{firstpage}
  \ifx\@empty\shortauthors \let\shortauthors\shorttitle
  \else \andify\shortauthors
  \fi
  \@maketitle@hook
  \begingroup
  \@maketitle
  \toks@\@xp{\shortauthors}\@temptokena\@xp{\shorttitle}%
  \toks4{\def\\{ \ignorespaces}}
  \edef\@tempa{%
    \@nx\markboth{\the\toks4
      \@nx{\the\toks@}}{\the\@temptokena}}%
  \@tempa
  \endgroup
  \c@footnote\z@
  \def\do##1{\let##1\relax}%
  \do\maketitle \do\@maketitle \do\title \do\@xtitle \do\@title
  \do\author \do\@xauthor \do\address \do\@xaddress
  \do\email \do\@xemail \do\curraddr \do\@xcurraddr
  \do\commby \do\@commby
  \do\dedicatory \do\@dedicatory \do\thanks \do\thankses
  \do\keywords \do\@keywords \do\subjclass \do\@subjclass
}
\begin{document}

\begin{abstract}
  Brenier and Grenier~[SIAM J. Numer. Anal., 1998] proved that sticky particle dynamics with a large number of particles allow to approximate the entropy solution to scalar one-dimensional conservation laws with monotonic initial data. In~[arXiv:1501.01498], we introduced a multitype version of this dynamics and proved that the associated empirical cumulative distribution functions converge to the viscosity solution, in the sense of Bianchini and Bressan~[Ann. of Math. (2), 2005], of one-dimensional diagonal hyperbolic systems with monotonic initial data of arbitrary finite variation. In the present paper, we analyse the $\Ls^1$ error of this approximation procedure, by splitting it into the discretisation error of the initial data and the non-entropicity error induced by the evolution of the particle system. We prove that the error at time $t$ is bounded from above by a term of order $(1+t)/n$, where $n$ denotes the number of particles, and give an example showing that this rate is optimal. We last analyse the additional error introduced when replacing the multitype sticky particle dynamics by an iterative scheme based on the typewise sticky particle dynamics, and illustrate the convergence of this scheme by numerical simulations.
\end{abstract}
\maketitle


\section{Introduction}

Systems of sticky particles have been known to reproduce the phenomenological behaviour of one-dimensional conservation laws in various physical contexts, in particular in astrophysics or in the study of gas dynamics~\cite{zeldo,vergassola}. In such systems, finitely many particles evolve on the real line at constant velocity and stick together at collisions, with preservation of mass and momentum but dissipation of energy. The relation between these discrete systems and the equations of continuum physics was formalised by Brenier and Grenier~\cite{bregre}, who showed that sticky particle dynamics with a large number of particles allow to approximate the entropy solution to scalar one-dimensional conservation laws with monotonic initial data. We also refer to Bouchut~\cite{bouchut}, Grenier~\cite{grenier}, and E, Rykov and Sinai~\cite{eryksin} for previous results in this direction. Based on this idea, we recently introduced a multitype sticky particle dynamics~\cite{jr} in order to approximate the viscosity solution, in the sense of Bianchini and Bressan~\cite{bianbres}, of one-dimensional diagonal hyperbolic systems with monotonic initial data of arbitrary finite variation.

These sticky particle dynamics provide natural numerical schemes for the corresponding solutions to scalar conservation laws or diagonal hyperbolic systems. It is thus of interest to control the approximation error due to this procedure. This is the purpose of this article. We shall rely on the remark that sticky particle dynamics generically induce exact weak solutions to the considered equation, but for discrete initial data. Besides, these weak solutions need not satisfy Kru\v{z}kov's entropy or Bianchini-Bressan's viscosity condition. This leads us to split the total approximation error into a discretisation error of the initial data, and a non-entropicity error induced by the evolution of the particle system. 

The discretisation error of the initial data is addressed in Section~\ref{s:ci}. In particular, if the initial conditions have a compactly supported distributional derivative, this error in $\Ls^1$ distance for $n$ particles is proved to be bounded from above by a term of order $1/n$. The error due to the evolution of the particle system is studied in Section~\ref{s:scl} for the case of scalar conservation laws and in Section~\ref{s:syst} for the case of diagonal hyperbolic systems. In both cases, this error at time $t \geq 0$ is proved to be bounded from above by a term of order $t/n$. This leads to a global convergence rate of order $(1+t)/n$ in the number $n$ of particles. The precise statements for scalar conservation laws and diagonal hyperbolic systems are respectively given in Theorems~\ref{theo:rateSPD} and~\ref{theo:rateMSPD}, which are the main results of this paper. These results are finally illustrated with numerical simulations in Section~\ref{s:num}. \revision{We emphasise the fact that the sticky particle approach is essentially restricted to the one-dimensional case and mention that (non-)existence and (non-)uniqueness issues related to its multidimensional generalisation were pointed out by Bressan and Nguyen~\cite{BreNgu14}.}

The remainder of this introduction is dedicated to a detailed presentation of the sticky particle dynamics (SPD) and the multitype sticky particle dynamics (MSPD). 


\subsection{SPD and scalar conservation laws} This subsection is dedicated to the introduction of the SPD, which allows to approximate the entropy solution to scalar conservation laws in one space dimension. 

\subsubsection{Scalar conservation laws} Let us consider the scalar conservation law
\begin{equation}\label{eq:scl}
  \left\{\begin{aligned}
    & \partial_t u + \partial_x\left(\Lambda(u)\right) = 0, \qquad t \geq 0, \quad x \in \R,\\
    & u(0,x) = u_0(x),
  \end{aligned}\right.
\end{equation}
for a nonconstant, monotonic and bounded initial condition $u_0$. Up to an affine transform of the flux function $\Lambda$, one can assume that $u_0$ is the cumulative distribution function (CDF) of a probability measure $m$ on the real line, which we denote $u_0 = H*m$ where $H(x) = \ind{x \geq 0}$ is the Heaviside function. The space of probability measures on the real line is denoted by $\Ps(\R)$. Then $\Lambda$ only needs to be defined on the interval $[0,1]$, and it shall be assumed to have the following regularity.
\begin{enumerate}[ref=C, label=(C)]
  \item\label{ass:C} The function $\Lambda$ is of class $\Cs^1$ on $[0,1]$.
\end{enumerate}
Under Assumption~\eqref{ass:C}, we denote $\lambda = \Lambda'$ and $\ConstBoundS = \sup_{u \in [0,1]} |\lambda(u)|$.

The following existence and uniqueness result follows from Kru\v{z}kov's theorem, see~\cite[Theorem~2.3.5 and Proposition~2.3.6, pp.~36-37]{serre}.
\begin{theo}\label{theo:kruz}
  Let $\Lambda : [0,1] \to \R$ satisfying Assumption~\eqref{ass:C} and $u_0 = H*m$ for $m \in \Ps(\R)$. There exists a unique weak solution $u : [0,+\infty) \times \R \to [0,1]$ to the scalar conservation law~\eqref{eq:scl} satisfying the entropy condition that, for all $c \in [0,1]$,
  \begin{equation*}
    \partial_t |u-c| + \partial_x \left(\sgn(u-c)(\Lambda(u)-\Lambda(c))\right) \leq 0
  \end{equation*}
  in the distributional sense, where $\sgn(v) := \ind{x \geq 0} - \ind{x<0}$.

  In addition, it satisfies the following properties:
  \begin{enumerate}[label=(\roman*), ref=\roman*]
    \item\label{it:kruz:1} preservation of total variation: for all $t \geq 0$, $u(t,\cdot)$ coincides $\dd x$-almost everywhere with the CDF of a probability measure $m_t$ on the real line;
    \item\label{it:kruz:2} finite speed of propagation: if $u_0(a)=0$, then $u(t,a-t\ConstBoundS)=0$ for all $t \geq 0$, and if $u_0(b)=1$, then $u(t,b+t\ConstBoundS)=1$ for all $t \geq 0$;
    \item\label{it:kruz:3} stability: if $u$ and $v$ refer to the entropy solutions to the scalar conservation law with respective initial data $u_0$ and $v_0$, then for all $t \geq 0$,
    \begin{equation*}
      \|u(t,\cdot)-v(t,\cdot)\|_{\Ls^1(\R)} \leq \|u_0-v_0\|_{\Ls^1(\R)}.
    \end{equation*}
  \end{enumerate}
\end{theo}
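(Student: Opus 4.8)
The plan is to treat existence, uniqueness and the $\Ls^1$ stability estimate (iii) as the classical output of Kru\v{z}kov's theory, and then to \emph{derive} the structural properties (i) and (ii) from the comparison principle that accompanies it. Concretely, existence of a weak solution satisfying the entropy condition can be obtained by the vanishing viscosity method: regularise~\eqref{eq:scl} by adding $\varepsilon \partial_{xx} u$, use the parabolic maximum principle to get uniform $\Ls^\infty$ and $\mathrm{BV}$ bounds, and pass to the limit $\varepsilon \dto 0$ along a subsequence, the entropy inequalities surviving the limit. Uniqueness, together with statement (iii), follows from Kru\v{z}kov's doubling-of-variables argument: given two entropy solutions $u,v$, one writes the entropy inequality for $u$ at $(t,x)$ with constant $c=v(s,y)$ and for $v$ at $(s,y)$ with $c=u(t,x)$, tests against a mollified product kernel, and lets the mollification concentrate on the diagonal to obtain $\partial_t\|u(t,\cdot)-v(t,\cdot)\|_{\Ls^1(\R)}\le 0$. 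For this part I would simply invoke the cited statements of Serre.

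It then remains to establish (i) and (ii). First, applying the entropy inequality with the admissible constants $c=0$ and $c=1$ (equivalently, comparing $u$ with the stationary entropy solutions $u\equiv 0$ and $u\equiv 1$) yields the maximum principle $0\le u(t,x)\le 1$. Monotonicity in $x$ is preserved by translation invariance and order preservation: for $h\ge 0$ the shifted datum satisfies $u_0(\cdot+h)\ge u_0(\cdot)$ since $u_0$ is nondecreasing, the function $(t,x)\mapsto u(t,x+h)$ is the entropy solution with this shifted datum, and the comparison principle (a consequence of (iii) via the Crandall--Tartar lemma) gives $u(t,x+h)\ge u(t,x)$. Hence $u(t,\cdot)$ admits a nondecreasing representative. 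Combined with the limits $u(t,\cdot)\to 0$ and $u(t,\cdot)\to 1$ at $\mp\infty$ --- which I obtain from (ii) below --- this representative is the CDF of a probability measure $m_t$, so that its total variation equals $1$, matching that of $m$; this is exactly (i).

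For (ii), note that monotonicity together with $u_0\in[0,1]$ forces $u_0\equiv 0$ on $(-\infty,a]$ whenever $u_0(a)=0$. I would then use the \emph{localised} form of the contraction estimate, obtained by running Kru\v{z}kov's doubling of variables against a test function supported in the backward cone $\{(s,y):|y-x|\le \ConstBoundS(t-s)\}$: since the characteristic speeds $\lambda=\Lambda'$ are bounded in modulus by $\ConstBoundS$, the value of $u$ at $(t,x)$ depends on $u_0$ only through its restriction to $[x-t\ConstBoundS,x+t\ConstBoundS]$. Comparing $u$ with the stationary solution $v\equiv 0$ on the half-line where the two initial data agree gives $u(t,x)=0$ for all $x\le a-t\ConstBoundS$, and in particular $u(t,a-t\ConstBoundS)=0$; the symmetric comparison with $v\equiv 1$ settles the case $u_0(b)=1$.

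The main obstacle is the doubling-of-variables machinery, which underlies simultaneously the global contraction (iii) and its finite-speed localisation used for (ii); everything else (maximum principle, order preservation, passage to CDFs) is a comparatively soft consequence of comparison and translation invariance. Since the statement is quoted from Serre's monograph, in practice I would cite the corresponding theorem and proposition for existence, uniqueness and the estimates, and only spell out the short comparison arguments that turn them into the probabilistic reformulation (i) and the finite-speed statement (ii).
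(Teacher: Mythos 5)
Your overall plan matches what the paper actually does: the paper gives no proof of this theorem at all, it simply quotes Serre (Theorem~2.3.5 and Proposition~2.3.6 there), and your proposal to invoke the same results for existence, uniqueness and~(iii), and then derive~(i) and~(ii) by soft comparison arguments, is the standard and appropriate route. The vanishing-viscosity existence, the doubling-of-variables contraction, the maximum principle via the constant solutions $0$ and $1$, the monotonicity of $u(t,\cdot)$ via translation invariance and order preservation, and the localised (cone) contraction underlying~(ii) are all sound.

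There is, however, one genuine gap: in~(i) you claim the limits $u(t,x)\to 0$ as $x\to-\infty$ and $u(t,x)\to 1$ as $x\to+\infty$ follow ``from (ii)''. Statement~(ii) is only informative when $u_0$ attains the values $0$ and $1$ at finite points, i.e.\ when $m$ has bounded support on the relevant side. For a measure $m$ with unbounded support --- for instance the two-sided exponential distribution~\eqref{eq:biexpo} used as initial datum in Section~\ref{s:num} of this very paper, or a Gaussian --- one has $0<u_0(x)<1$ for every $x$, so~(ii) is vacuous, and your argument only yields that the nondecreasing function $u(t,\cdot)$ has limits $0\le L^-\le L^+\le 1$ at $\mp\infty$; nothing excludes a loss of mass, say $L^+<1$, in which case $m_t$ would only be a sub-probability measure. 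The gap is fixable with exactly the tools you already deploy: given $\varepsilon>0$, pick $b$ with $u_0(b)\ge 1-\varepsilon$ and let $w$ be the entropy solution with initial datum $w_0:=\max(u_0,1-\varepsilon)$. Comparison with the constant solution $1-\varepsilon$ gives $w\ge 1-\varepsilon$ everywhere, and since $w_0=u_0$ on $[b,+\infty)$, the finite domain of dependence (your localised doubling-of-variables estimate) yields $u(t,x)=w(t,x)\ge 1-\varepsilon$ for all $x\ge b+t\ConstBoundS$. Hence $L^+\ge 1-\varepsilon$ for every $\varepsilon>0$, i.e.\ $L^+=1$; the limit at $-\infty$ is handled symmetrically. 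With this replacement, (i) is complete and the rest of your proposal stands.
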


\subsubsection{Sticky Particle Dynamics} For $n \geq 1$, we denote by $D_n$ the polyhedron of $\R^n$ defined by
\begin{equation*}
  D_n := \{\rx = (x_1, \ldots, x_n) : x_1 \leq \cdots \leq x_n\}.
\end{equation*}
Let $\rx \in D_n$ be a vector of initial positions and $\rblambda = (\rblambda_1, \ldots, \rblambda_n) \in \R^n$ a vector of initial velocities. Under the SPD, the particle with index $k \in \{1, \ldots, n\}$ has initial position $x_k$, initial velocity $\rblambda_k$ and mass $1/n$. It evolves at constant velocity on the real line, up to the first collision with another particle. At collisions, the particles stick together and form a cluster: its mass is given by the number of colliding particles over $n$, and its velocity by the average of the pre-collisional velocities of the particles. More generally, when several clusters collide, they form a single cluster with conservation of total mass and momentum.

For all $t \geq 0$, the position of the $k$-th particle at time $t \geq 0$ is denoted by $\phi_k[\rblambda](\rx;t)$, and it is easy to check that the process $(\phi[\rblambda](\rx;t))_{t \geq 0}$ defined by $\phi[\rblambda](\rx;t) := (\phi_1[\rblambda](\rx;t), \ldots, \phi_n[\rblambda](\rx;t))$ induces a continuous flow in $D_n$. Its stability with respect to the initial configuration and the vector of initial velocities is detailed in Proposition~\ref{prop:stabSPD} below. Before stating this result, let us define the normalised $\Ls^1$ distance on $D_n$ by
\begin{equation*}
  \|\rx-\ry\|_1 := \frac{1}{n} \sum_{k=1}^n |x_k-y_k|.
\end{equation*}

\begin{prop}\cite[Proposition~3.1.9, (i)]{jr}\label{prop:stabSPD}
  Let $\rx, \ry \in D_n$ and $\rblambda, \rbmu \in \R^n$. For all $0 \leq s \leq t$,
  \begin{equation}\label{eq:stabSPD}
    \|\phi[\rblambda](\rx;t) - \phi[\rbmu](\ry;t)\|_{1}\leq \|\phi[\rblambda](\rx;s) - \phi[\rbmu](\ry;s)\|_{1} + \frac{t-s}{n}\sum_{k=1}^n |\rblambda_k-\rbmu_k|.
  \end{equation}
\end{prop}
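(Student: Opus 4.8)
The plan is to reduce the estimate to a differential inequality and integrate. Since $\rblambda$ and $\rbmu$ are fixed and only finitely many collisions occur in each system, both trajectories $r\mapsto\phi[\rblambda](\rx;r)$ and $r\mapsto\phi[\rbmu](\ry;r)$ are continuous and piecewise linear, hence so is $r\mapsto\|\phi[\rblambda](\rx;r)-\phi[\rbmu](\ry;r)\|_1$. Writing $\phi_k(r),\psi_k(r)$ for the two positions and $v_k(r),w_k(r)$ for the corresponding right velocities, it is therefore enough to bound the right derivative
\[
  \frac{1}{n}\sum_{k=1}^n \epsilon_k\bigl(v_k(r)-w_k(r)\bigr)\le\frac{1}{n}\sum_{k=1}^n|\rblambda_k-\rbmu_k|
\]
at every $r$ that is not a collision time, and then integrate over $[s,t]$. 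Here I set $\epsilon_k:=\sgn(\phi_k(r)-\psi_k(r))$ when $\phi_k(r)\neq\psi_k(r)$ and $\epsilon_k:=\sgn(v_k(r)-w_k(r))$ otherwise, so that $\epsilon_k(v_k-w_k)$ is exactly the right derivative of $|\phi_k-\psi_k|$ and $\epsilon_k\in\{-1,0,1\}$.

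Two structural inputs drive the argument. First, conservation of mass and momentum makes the velocity of a cluster the average of the initial velocities of the particles it contains, so $v_k(r)\equiv v_C$ is constant over each cluster $C=\{i,\ldots,j\}$ of the first system, and the partial sums obey an Oleinik-type entropy condition $\sum_{l=i}^p(\rblambda_l-v_C)\ge 0$ for $i\le p\le j$, with equality at $p=j$; the analogue holds for the second system with $\rbmu$ and $w$. (These are defining features of the sticky dynamics established in \cite{jr}; the sign is the one seen in the two-particle case, where the rear particle of a cluster is always the faster one.) Second, the sign vector is monotone along clusters: over a cluster $C$ of the first system the common position $\phi_k\equiv\Phi_C$ is compared with the nondecreasing sequence $\psi_k$, so $\epsilon_k=\sgn(\Phi_C-\psi_k)$ is nonincreasing in $k\in C$, and symmetrically $(\epsilon_k)_k$ is nondecreasing along each cluster of the second system. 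The only delicate point is the treatment of ties $\phi_k=\psi_k$: at a non-collision time such indices share a common position in the second system, hence lie in a single cluster there, so $\epsilon_k=\sgn(v_C-w_{C'})$ is constant on the tie block and lies between the surrounding $+1$'s and $-1$'s, which preserves the monotonicity.

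With these two ingredients the key inequality follows by Abel summation within each cluster. Setting $g_k=\rblambda_k-v_C$ and $G_p=\sum_{l=i}^p g_l$, the boundary terms vanish ($G_{i-1}=G_j=0$) and one gets $\sum_{k\in C}\epsilon_k g_k=\sum_{k=i}^{j-1}(\epsilon_k-\epsilon_{k+1})G_k\ge 0$ because $\epsilon$ is nonincreasing and $G_k\ge 0$; hence $\sum_{k\in C}\epsilon_k v_C\le\sum_{k\in C}\epsilon_k\rblambda_k$, and summing over the clusters of the first system gives $\sum_k\epsilon_k v_k\le\sum_k\epsilon_k\rblambda_k$. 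The same computation for the second system, where $\epsilon$ is nondecreasing along clusters, reverses the inequality into $\sum_k\epsilon_k w_k\ge\sum_k\epsilon_k\rbmu_k$. Subtracting,
\[
  \sum_{k=1}^n\epsilon_k(v_k-w_k)\le\sum_{k=1}^n\epsilon_k(\rblambda_k-\rbmu_k)\le\sum_{k=1}^n|\rblambda_k-\rbmu_k|,
\]
using $|\epsilon_k|\le 1$ in the last step, which is exactly the bound on the right derivative.

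The main obstacle is the second step: matching the monotonicity of the sign vector along clusters with the correct sign of the entropy condition, and keeping every inequality consistent in direction, since a reversed sign in the entropy condition or in the Abel summation would be fatal. The tie-handling looks like a technicality but is precisely where one must use that coincident positions force particles into a common cluster; without it the sign vector need not be monotone and the argument collapses. Everything else — the reduction to the right derivative and the final integration over $[s,t]$ — is routine once the flow is known to be piecewise linear.
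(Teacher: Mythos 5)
The paper itself offers no proof of this proposition: it is imported verbatim from \cite{jr} (Proposition~3.1.9,~(i)), so there is no in-paper argument to compare yours against, and I assess your proof on its own merits and against the structural facts established in \cite{jr}.

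Your argument is correct in its essentials and is the natural direct proof: reduce to a bound on the right derivative of the piecewise linear function $r\mapsto\|\phi[\rblambda](\rx;r)-\phi[\rbmu](\ry;r)\|_1$, then combine (a) the barycentric velocity property of clusters (conservation of momentum), (b) the discrete Oleinik condition on prefix sums of initial velocities within a cluster, and (c) the monotonicity of the sign vector along clusters of each system, via Abel summation. Facts (a)--(b) are indeed established in \cite{jr} as defining features of the dynamics; your sign conventions are coherent (the signed sum is exactly the right derivative, including at indices where $\phi_k=\psi_k$), and both Abel summations go in the right direction, so the term-by-term failure of $\sum_k|v_k-w_k|\leq\sum_k|\rblambda_k-\rbmu_k|$ is correctly circumvented.

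One step is stated imprecisely, although the conclusion survives. In the tie handling you claim that indices with $\phi_k=\psi_k$ ``lie in a single cluster'' of the second system because they share a position. Sharing a position does not force membership in a single cluster: distinct clusters of the same system may occupy the same location. What is true, and what you actually need, is that at a non-collision time $r>0$ two clusters of the same system at the same position must have equal velocities: if the left one were faster, $r$ would be a collision time; if it were slower, the ordering of positions would have been violated immediately before $r$. This yields constancy of $w_k$, hence of $\epsilon_k$, on the tie block. The remaining edge case is $r=0$ when the initial configuration contains coincident particles that do not aggregate: then the velocities across a tie block need not be constant, but they are nondecreasing in $k$ (coincident stable clusters must have ordered velocities for the flow to preserve order), so $\epsilon_k=\sgn(v_C-w_k)$ is still nonincreasing along the cluster and the monotonicity used in the Abel summation is preserved; alternatively, since the distance is piecewise linear, you may simply exclude this finite set of exceptional times from the differential inequality. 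With this repair the proof is complete.
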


\subsubsection{Approximation of the scalar conservation law} Let $\Lambda$ satisfy Assumption~\eqref{ass:C}. In order to approximate the entropy solution to the scalar conservation law~\eqref{eq:scl}, we specify a choice of initial velocities for the SPD by defining $\rblambda \in \R^n$ as
\begin{equation*}
  \forall k \in \{1, \ldots, n\}, \qquad \rblambda_k := n \int_{w=(k-1)/n}^{k/n} \lambda(w)\dd w.
\end{equation*}
Given an initial configuration $\rx \in D_n$, we define the empirical distribution of the SPD at time $t \geq 0$ by
\begin{equation*}
  \mu_t[\rx] := \frac{1}{n} \sum_{k=1}^n \delta_{\phi_k[\rblambda](\rx;t)} \in \Ps(\R),
\end{equation*}
and the associated empirical CDF by
\begin{equation*}
  u_n[\rx](t,x) := H*\mu_t[\rx](t,x) = \frac{1}{n} \sum_{k=1}^n \ind{\phi_k[\rblambda](\rx;t) \leq x}.
\end{equation*}

Given $n \geq 1$ and $\rx \in D_n$, it is easily checked that the empirical CDF $u_n[\rx]$ satisfies the properties~\eqref{it:kruz:1}, \eqref{it:kruz:2} and~\eqref{it:kruz:3} of Theorem~\ref{theo:kruz}. The preservation of the total variation is obvious, and the finite speed of propagation is just the transcription of the fact that the modulus of the initial velocities $\rblambda_k$ is bounded by $\ConstBoundS$, uniformly with respect to $n$. Finally, the $\Ls^1$ stability follows from~\eqref{eq:stabSPD} with $\rbmu=\rblambda$. 

It can also be shown that $u_n[\rx]$ is a weak solution to the scalar conservation law~\eqref{eq:scl} with discrete initial condition~\cite[Proposition~4.2.1]{jr}. However for fixed $n$, it does not necessarily satisfy the entropy condition of Theorem~\ref{theo:kruz}. This property is recovered when taking the limit of an infinite number of particles, as is expressed by the following result, the proof of which is originally due to Brenier and Grenier~\cite{bregre}, see also~\cite{jourdain:sticky} and~\cite[Lemma~8.2.3]{jr} for appropriate generalisations.
\begin{theo}\label{theo:cvSPD}
  Let $\Lambda$ satisfy Assumption~\eqref{ass:C} and let $m \in \Ps(\R)$. Let $(\rx(n))_{n \geq 1}$ be a sequence of initial configurations such that, for all $n \geq 1$, $\rx(n) \in D_n$ and the empirical distribution
  \begin{equation*}
    \mu_0[\rx(n)] = \frac{1}{n} \sum_{k=1}^n \delta_{x_k(n)}
  \end{equation*}
  converges weakly to $m$. 
  
  For all $t \geq 0$, the empirical distribution $\mu_t[\rx(n)]$ converges weakly to the probability measure $m_t \in \Ps(\R)$ such that $u(t,x) := H*m_t(x)$ is the unique entropy solution of the scalar conservation law~\eqref{eq:scl} with initial condition $u_0 = H*m$. Equivalently, for all $t \geq 0$, the empirical CDF $u_n[\rx(n)](t,\cdot)$ converges $\dd x$-almost everywhere to $u(t,\cdot)$.
\end{theo}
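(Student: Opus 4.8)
The plan is to combine a compactness argument for the empirical measures with the uniqueness statement of Theorem~\ref{theo:kruz}, which will identify every subsequential limit. For the compactness, fix $T>0$: since $\mu_0[\rx(n)]$ converges weakly to $m$ the family is tight, and since $|\rblambda_k|\leq\ConstBoundS$ each particle travels at speed at most $\ConstBoundS$, so whatever mass sits in $[-R,R]$ at time $0$ remains in $[-R-\ConstBoundS T,R+\ConstBoundS T]$ for $t\in[0,T]$; hence $(\mu_t[\rx(n)])_n$ is tight, uniformly in $t\in[0,T]$. Pairing the $k$-th particles and using $|\phi_k[\rblambda](\rx(n);t)-\phi_k[\rblambda](\rx(n);s)|\leq\ConstBoundS|t-s|$ yields the uniform time-modulus $\Ws_1(\mu_t[\rx(n)],\mu_s[\rx(n)])\leq\ConstBoundS|t-s|$, where $\Ws_1$ is the Wasserstein distance of order $1$. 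By Prokhorov's theorem and an Arzel\`a--Ascoli argument (with a diagonal extraction over $T=1,2,\dots$), I can pass to a subsequence along which $\mu_t[\rx(n)]$ converges weakly to some $\nu_t$ for every $t\geq0$; equivalently, $u_n[\rx(n)](t,\cdot)$ converges $\dd x$-almost everywhere to $\bar u(t,\cdot):=H*\nu_t$.

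Next I would check that $\bar u$ is the entropy solution. That it is a weak solution of~\eqref{eq:scl} follows by passing to the limit in the weak formulation satisfied by each $u_n[\rx(n)]$~\cite[Proposition~4.2.1]{jr}: the integrands being bounded by $1$, dominated convergence gives $u_n\to\bar u$ and, by continuity of $\Lambda$, $\Lambda(u_n)\to\Lambda(\bar u)$ in $\Ls^1_{\mathrm{loc}}$, while $u_n[\rx(n)](0,\cdot)\to u_0$ fixes the initial datum $u_0=H*m$.

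The decisive point is the entropy condition. Between two consecutive collisions the $k$-th discontinuity of $u_n[\rx(n)]$, separating the values $(k-1)/n$ and $k/n$, moves exactly at its Rankine--Hugoniot speed $\rblambda_k=n\int_{(k-1)/n}^{k/n}\lambda$, and a direct computation with $\eta(u)=|u-c|$ shows that the positive part of the entropy produced by this shock is controlled by the deviation of $\Lambda$ from its chord on $[(k-1)/n,k/n]$, hence by $2n^{-1}\omega_\lambda(n^{-1})$, where $\omega_\lambda$ is the modulus of continuity of $\lambda$. Summing over the $n$ discontinuities and integrating over $[0,T]$, the total positive part of this free-motion production is $\grandO(\omega_\lambda(n^{-1}))\to0$. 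The collisions, at which clusters merge with conservation of mass and momentum but loss of energy, contribute nonpositively to each Kru\v{z}kov balance---the discrete counterpart of the entropy-dissipating merging of shocks. Hence, tested against any nonnegative $\varphi$,
\begin{equation*}
  \left\langle \partial_t|u_n-c|+\partial_x\left(\sgn(u_n-c)(\Lambda(u_n)-\Lambda(c))\right),\varphi\right\rangle \leq \varepsilon_n\|\varphi\|_\infty,
\end{equation*}
with $\varepsilon_n\to0$; letting $n\to\infty$ along the subsequence, the almost everywhere convergence and the continuity of $\Lambda$ give the entropy condition of Theorem~\ref{theo:kruz} for $\bar u$.

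By the uniqueness part of Theorem~\ref{theo:kruz}, $\bar u$ then coincides with the entropy solution $u$ of~\eqref{eq:scl}, so $\nu_t=m_t$ for every $t$, independently of the extracted subsequence; therefore the full sequence $\mu_t[\rx(n)]$ converges weakly to $m_t$. I expect the main obstacle to be precisely the entropy step, namely checking that each collision contributes with the correct (nonpositive) sign to \emph{every} Kru\v{z}kov entropy and not merely to the physical kinetic energy. For a convex flux this is immediate from the velocity ordering at collisions (faster particles behind, slower ahead) via Oleinik's one-sided condition; for a general $\Cs^1$ flux one must argue directly on the piecewise-constant-in-velocity cluster dynamics. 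Once this sign is secured, the compactness and uniqueness steps are routine.
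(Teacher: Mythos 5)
Your overall architecture --- tightness plus a time modulus for the empirical measures, passage to the limit in the weak formulation, a quantitative bound on the Kru\v{z}kov entropy production of $u_n[\rx(n)]$, and identification of the limit by the uniqueness part of Theorem~\ref{theo:kruz} --- is sound, and the free-motion computation (production of a shock with Rankine--Hugoniot speed equals twice the gap between the chord and the graph of $\Lambda$) is correct. But there is a genuine gap at exactly the step you flag as the ``main obstacle'', and the justification you offer there does not close it. First, your production estimate is stated for jumps separating $(k-1)/n$ and $k/n$; this describes $u_n[\rx(n)]$ only up to the first collision. Afterwards, the discontinuities are cluster shocks joining $(i-1)/n$ to $j/n$ with $j>i$; they persist for all later times, and their production for the entropy $|u-c|$ is $2\left(\mathrm{chord}(c)-\Lambda(c)\right)$ along the shock, where the chord is now taken over the whole macroscopic interval $[(i-1)/n,j/n]$. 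A priori this is of order $1$, not $\grandO(n^{-1}\omega_\lambda(n^{-1}))$, so the total production need not vanish unless one proves a structural property of the clusters. Second, the argument you give for that property --- conservation of mass and momentum together with loss of energy --- is not sufficient: momentum conservation is exactly the Rankine--Hugoniot relation (which you already used for the weak formulation, and which is compatible with non-entropic shocks), and energy dissipation corresponds to a single quadratic entropy, whereas the Kru\v{z}kov condition must hold for \emph{every} constant $c$.

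What is actually needed is the following lemma, which is the heart of the Brenier--Grenier theorem that the paper invokes for this statement: every cluster produced by the SPD satisfies the Oleinik condition relative to the piecewise affine interpolation $\Lambda_n$ of $\Lambda$ at the points $k/n$, i.e.\ $\Lambda_n$ lies above the cluster chord. Granting this, the positive production of any shock with respect to the true flux is at most $2\sup_{[0,1]}|\Lambda-\Lambda_n|\leq 2n^{-1}\omega_\lambda(n^{-1})$, and your scheme closes; equivalently, and more in the spirit of the paper, $u_n[\rx(n)]$ is then the \emph{exact} entropy solution of the conservation law with flux $\Lambda_n$, and one concludes by stability with respect to the flux and the initial datum as in Proposition~\ref{prop:genstaconscal}. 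The lemma itself is proved by induction on collisions, not by energy arguments: a single particle is admissible because its jump spans one affine piece of $\Lambda_n$; and when a left cluster with chord slope $\sigma_1$ overtakes a right one with chord slope $\sigma_2$ (a collision forces $\sigma_1>\sigma_2$), the merged chord has slope in $[\sigma_2,\sigma_1]$, hence lies below the left chord on the left interval and below the right chord on the right interval, hence below $\Lambda_n$; simultaneous multi-cluster collisions reduce to this case since coincidence at one space-time point forces the adjacent speeds to be decreasing from left to right. Without this induction (or an explicit citation to it), the entropy step, and therefore the identification of the limit, remains unproven.
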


Using Theorem~\ref{theo:cvSPD} to pass to the limit $n \to +\infty$ in~\eqref{eq:stabSPD}, the stability inequality of~\eqref{it:kruz:3} can be extended in order to take the dependence of the entropy solution on the flux function into account. This is done in the next proposition, which is of independent interest, and the proof of which is postponed to Appendix~\ref{app:pf}.

\begin{prop}\label{prop:genstaconscal}
  Let $\Lambda, \Mu : [0,1] \to \R$ satisfying Assumption~\eqref{ass:C}, and $u_0, v_0$ be CDFs on the real line. Denote $\lambda := \Lambda'$ and $\mu := \Mu'$, and call $u$ and $v$ the entropy solutions of the scalar conservation law with respective flux function $\Lambda$ and $\Mu$, and respective initial data $u_0$ and $v_0$. Then, for all $0 \leq s \leq t$,
  \begin{equation*}
    \|u(t,\cdot)-v(t,\cdot)\|_{\Ls^1(\R)} \leq \|u(s,\cdot)-v(s,\cdot)\|_{\Ls^1(\R)} + (t-s) \int_{w=0}^1 |\lambda(w)-\mu(w)|\dd w.
  \end{equation*}
\end{prop}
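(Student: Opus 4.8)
The plan is to derive the inequality by applying the particle-level stability estimate~\eqref{eq:stabSPD} to the two sticky particle dynamics that approximate $u$ and $v$ respectively, and then to let the number of particles tend to infinity. Since the right-hand side is nonnegative, the claim is trivial whenever $\|u(s,\cdot)-v(s,\cdot)\|_{\Ls^1(\R)}=+\infty$, so I may concentrate on the case where this quantity is finite. Fix $n \geq 1$, let $\rblambda,\rbmu \in \R^n$ be the velocity vectors attached to the fluxes $\Lambda$ and $\Mu$, defined by $\rblambda_k = n\int_{(k-1)/n}^{k/n}\lambda(w)\dd w$ and $\rbmu_k = n\int_{(k-1)/n}^{k/n}\mu(w)\dd w$, and choose ordered initial configurations $\rx(n),\ry(n) \in D_n$ whose empirical distributions converge weakly to the probability measures $m$ and $\bar m$ with cumulative distribution functions $u_0$ and $v_0$; it is convenient to take the quantile discretisations, which additionally make the initial first moments converge.

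The first key point is the elementary identity that, for two equally weighted empirical measures with ordered atoms, the $\Ls^1(\R)$ distance between their cumulative distribution functions equals the normalised distance $\|\cdot\|_1$ between their position vectors, both being equal to the Wasserstein distance $\Ws_1(\alpha,\beta):=\int_\R |H*\alpha-H*\beta|\dd x$ computed along the monotone coupling. Because the flows $\phi[\rblambda](\rx(n);\cdot)$ and $\phi[\rbmu](\ry(n);\cdot)$ remain in $D_n$, their coordinates stay ordered, so this identity applies at every time. Denoting by $u_n(r,\cdot)$ and $v_n(r,\cdot)$ the two empirical CDFs at time $r$, I therefore have $\|u_n(r,\cdot)-v_n(r,\cdot)\|_{\Ls^1(\R)}=\|\phi[\rblambda](\rx(n);r)-\phi[\rbmu](\ry(n);r)\|_1$. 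Applying~\eqref{eq:stabSPD} between $s$ and $t$ and bounding the velocity term by
\begin{equation*}
  \frac{1}{n}\sum_{k=1}^n|\rblambda_k-\rbmu_k| \leq \sum_{k=1}^n\int_{(k-1)/n}^{k/n}|\lambda(w)-\mu(w)|\dd w = \int_0^1|\lambda(w)-\mu(w)|\dd w,
\end{equation*}
I obtain, for every $n$,
\begin{equation*}
  \|u_n(t,\cdot)-v_n(t,\cdot)\|_{\Ls^1(\R)} \leq \|u_n(s,\cdot)-v_n(s,\cdot)\|_{\Ls^1(\R)} + (t-s)\int_0^1|\lambda(w)-\mu(w)|\dd w.
\end{equation*}

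It then remains to pass to the limit $n \to +\infty$. By Theorem~\ref{theo:cvSPD}, for each fixed $r$ the empirical CDFs $u_n(r,\cdot)$ and $v_n(r,\cdot)$ converge $\dd x$-almost everywhere to $u(r,\cdot)$ and $v(r,\cdot)$, so Fatou's lemma applied to the left-hand side above yields $\|u(t,\cdot)-v(t,\cdot)\|_{\Ls^1(\R)} \leq \liminf_n \|u_n(t,\cdot)-v_n(t,\cdot)\|_{\Ls^1(\R)}$. To turn the right-hand side into the desired bound at time $s$, I claim that in fact $\|u_n(s,\cdot)-v_n(s,\cdot)\|_{\Ls^1(\R)} \to \|u(s,\cdot)-v(s,\cdot)\|_{\Ls^1(\R)}$, i.e. that the weak convergence can be upgraded to convergence of the associated $\Ls^1$ distances. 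Granting this convergence at time $s$, combining the Fatou lower bound at time $t$ with the per-$n$ inequality gives $\|u(t,\cdot)-v(t,\cdot)\|_{\Ls^1(\R)} \leq \|u(s,\cdot)-v(s,\cdot)\|_{\Ls^1(\R)} + (t-s)\int_0^1|\lambda(w)-\mu(w)|\dd w$, which is the claim.

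This upgrade is the main obstacle, since weak convergence alone does not control the mass that the dynamics may transport towards infinity. To handle it I would use the finite speed of propagation: each particle satisfies $|\phi_k[\rblambda](\rx(n);r)| \leq |x_k(n)| + r\ConstBoundS$ because all (possibly post-collisional) velocities are bounded in modulus by $\ConstBoundS$, so the tails of $\mu_r[\rx(n)]$ are dominated by those of $\mu_0[\rx(n)]$. When $m$ and $\bar m$ have a finite first moment, the quantile discretisation makes the families $\{\mu_0[\rx(n)]\}_n$ and $\{\mu_0[\ry(n)]\}_n$ uniformly integrable, hence so are $\{\mu_r[\rx(n)]\}_n$ and $\{\mu_r[\ry(n)]\}_n$; combined with the weak convergence of Theorem~\ref{theo:cvSPD} this gives convergence in $\Ws_1$ of each family towards $m_r$ and $\bar m_r$, and the triangle inequality then yields the convergence of $\|u_n(r,\cdot)-v_n(r,\cdot)\|_{\Ls^1(\R)}$ to $\|u(r,\cdot)-v(r,\cdot)\|_{\Ls^1(\R)}$ for every $r$, in particular at $r=s$. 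The general case of CDFs with $\|u_0-v_0\|_{\Ls^1(\R)}<+\infty$ but possibly infinite first moments is then recovered by truncating $u_0$ and $v_0$ outside a large common interval, controlling the resulting error through the $\Ls^1$-stability property~\eqref{it:kruz:3} of the entropy solution, and letting the truncation level tend to infinity at the end.
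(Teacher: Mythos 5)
Your overall strategy is the same as the paper's: discretise both initial data, apply the particle stability estimate~\eqref{eq:stabSPD} with the two velocity vectors, and pass to the limit $n\to+\infty$ using Theorem~\ref{theo:cvSPD} together with Fatou's lemma (equivalently, lower semicontinuity of $\Ws_1$ under weak convergence) on the time-$t$ term. Your treatment of the case where both $m$ and $\bar m$ have finite first moments is correct: the transfer of uniform integrability through the dynamics via the bound $|\phi_k[\rblambda](\rx(n);r)-x_k(n)|\leq r\ConstBoundS$ is a legitimate way to upgrade weak convergence to $\Ws_1$ convergence at time $s$. The paper instead reduces to $s=0$ and invokes \cite[Lemma~8.1.6]{jr}, which gives $\Ws_1(\mu_0[\rx(n)],\mu_0[\ry(n)])\to\Ws_1(m,\bar m)$ for the common-quantile discretisation~\eqref{eq:chin} without any moment hypothesis.

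The genuine gap is your final truncation step, which is supposed to cover general CDFs: it fails whenever $m$ or $\bar m$ has an infinite first moment. In that case \emph{every} truncation $u_0^R$ that is the CDF of a compactly supported (or merely finite-first-moment) measure satisfies $\|u_0-u_0^R\|_{\Ls^1(\R)}=\Ws_1(m,m^R)=+\infty$; this is exactly the observation made at the end of Subsection~1.1.4 of the paper. Concretely, if $\int_0^{+\infty}(1-u_0(x))\,\dd x=+\infty$, then $\int_R^{+\infty}(1-u_0(x))\,\dd x=+\infty$ for every $R$, so the error terms you propose to control by the stability property~\eqref{it:kruz:3} are infinite for all truncation levels and cannot vanish as $R\to+\infty$. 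This case is not vacuous: take $u_0$ and $v_0$ to be the CDFs of two mutually shifted Cauchy distributions, for which $\|u_0-v_0\|_{\Ls^1(\R)}<+\infty$ while both first moments are infinite; the proposition has nontrivial content there, and your argument does not reach it. The repair is structural rather than cosmetic: first reduce to $s=0$ by re-timing (since $(t,x)\mapsto u(t+s,x)$ is the entropy solution started from $u(s,\cdot)$ — this matters because your uniform-integrability argument at time $s$ also relies on finite first moments), and then discretise $m$ and $\bar m$ along the \emph{same} quantile levels as in~\eqref{eq:chin}. The monotone common-quantile coupling then bounds the mutual distance of the two discrete measures directly, $\Ws_1(\mu_0[\rx(n)],\mu_0[\ry(n)])\leq \tfrac{n+1}{n}\int_0^1|u_0^{-1}(w)-v_0^{-1}(w)|\,\dd w=\tfrac{n+1}{n}\Ws_1(m,\bar m)$ by~\eqref{eq:W1L1}, without ever comparing either discretisation to its own target measure — which is the comparison that blows up. This joint-coupling estimate (the content of \cite[Lemma~8.1.6]{jr}) is the missing ingredient your proof needs to cover the full statement.
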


\subsubsection{Rate of convergence}\label{sss:rateSPD} Given a sequence of initial configurations $(\rx(n))_{n \geq 1}$ satisfying the assumptions of Theorem~\ref{theo:cvSPD}, our first purpose in this article is to estimate the error when approximating $u$ with $u_n[\rx(n)]$. On account of the stability property stated in Theorem~\ref{theo:kruz}, a fairly natural distance to measure this error is the $\Ls^1$ distance on $\R$. Indeed, this stability property allows us to write, for all $t \geq 0$,
\begin{equation}\label{eq:decompscl}
  \begin{aligned}
    & \|u(t,\cdot) - u_n[\rx(n)](t,\cdot)\|_{\Ls^1(\R)}\\
    & \qquad \leq \|u(t,\cdot) - u_{\infty}[\rx(n)](t,\cdot)\|_{\Ls^1(\R)} + \|u_{\infty}[\rx(n)](t,\cdot)-u_n[\rx(n)](t,\cdot)\|_{\Ls^1(\R)}\\
    & \qquad \leq \|u_0 - u_{\infty,0}[\rx(n)]\|_{\Ls^1(\R)} + \|u_{\infty}[\rx(n)](t,\cdot)-u_n[\rx(n)](t,\cdot)\|_{\Ls^1(\R)},
  \end{aligned}
\end{equation}
where we have introduced the entropy solution $u_{\infty}[\rx(n)]$ to the scalar conservation law~\eqref{eq:scl} with discretised initial condition $u_{\infty,0}[\rx(n)] := H*\mu_0[\rx(n)]$. The two terms in the right-hand side above are of a very different nature, and can be estimated separately: the first term corresponds to the discretisation error of the measure $m$, while the second term only measures the non-entropicity induced by the evolution of the particle system for a given initial condition $u_{\infty,0}[\rx(n)]$.

The discretisation error of the measure $m$ is addressed in Section~\ref{s:ci}. There, we use the fact that, given two probability measures $m$ and $m'$ on the real line, the $\Ls^1$ distance between $H*m$ and $H*m'$ is the Wasserstein distance of order $1$ between $m$ and $m'$, defined by
\begin{equation}\label{eq:defiwass}
  \Ws_1(m,m') := \inf_{\mathfrak{m} <^m_{m'}} \int_{(x,x') \in \R^2} |x-x'| \mathfrak{m}(\dd x\dd x'),
\end{equation}
where the infimum runs over all the probability measures $\mathfrak{m} \in \Ps(\R^2)$ such that, for all Borel sets $A, A' \subset \R$,
\begin{equation*}
  \mathfrak{m}(A \times \R) = m(A), \qquad \mathfrak{m}(\R \times A') = m'(A').
\end{equation*}
This is due to the fact that, on the real line, the measure 
\begin{equation*}
  \mathfrak{m} = \Unif \circ \left((H*m)^{-1}, (H*m')^{-1}\right)^{-1},
\end{equation*}
where $\Unif$ refers to the Lebesgue measure on $[0,1]$, realises the infimum in~\eqref{eq:defiwass}. In this definition, the pseudo-inverse $F^{-1}$ of a CDF is defined by
\begin{equation}\label{eq:pseudoinv}
  \forall v \in (0,1), \qquad F^{-1}(v) := \inf\{x \in \R : F(x) \geq v\}.
\end{equation}
We deduce that
\begin{equation}\label{eq:W1L1}
  \Ws_1(m,m') = \int_{v=0}^1 |(H*m)^{-1}(v)-(H*m')^{-1}(v)|\dd v = \int_{x \in \R} |H*m(x)-H*m'(x)|\dd x, 
\end{equation}
whence $\|H*m-H*m'\|_{\Ls^1(\R)} = \Ws_1(m,m')$.

As a consequence, the first term in the right-hand side of~\eqref{eq:decompscl} rewrites
\begin{equation*}
  \|u_0 - u_{\infty,0}[\rx(n)]\|_{\Ls^1(\R)} = \Ws_1(m,\mu^n), \qquad \mu^n = \frac{1}{n} \sum_{k=1}^n \delta_{x_k(n)}.
\end{equation*}
Precise bounds on $\Ws_1(m,\mu^n)$ in terms of $n$ for the optimal discretisation of $m$ are derived in Lemma~\ref{lem:discr}. They depend heavily on the tail of $m$. In particular, an important remark to be done at this point is the following. Assume that $m$ has an infinite first order moment. Then, since by~\eqref{eq:defiwass} and the triangle inequality,
\begin{equation*}
  \Ws_1(m,m') \geq \int_{x \in \R} |x|m(\dd x) - \int_{x' \in \R} |x'|m'(\dd x'),
\end{equation*}
any approximation of $m$ by a measure $\mu^n$ with finite first order moment necessarily satisfies $\Ws_1(m,\mu^n)=+\infty$. As a consequence, there is no purpose in trying to compute a rate of convergence in this case. Therefore, although our results hold true without any assumption on $m$, they only have a nontrivial content when $m$ has a finite first order moment.

The non-entropicity error is then addressed in Section~\ref{s:scl}, where given arbitrary $n \geq 1$ and $\rx \in D_n$, an estimation is first derived on the $\Ls^1$ distance between $u_n[\rx](t, \cdot)$ and $u_{\infty}[\rx](t, \cdot)$ in Proposition~\ref{prop:rateSPD}. This result holds under the following strengthening of Assumption~\eqref{ass:C}.
\begin{enumerate}[ref=LC, label=(LC)]
  \item\label{ass:LC} The function $\lambda=\Lambda'$ is $\ConstLip$-Lipschitz continuous.
\end{enumerate}
Combining the results of Section~\ref{s:ci} with Proposition~\ref{prop:rateSPD} yields complete rates of convergence of the SPD, as is stated in Theorem~\ref{theo:rateSPD}.

\subsection{MSPD and diagonal hyperbolic systems} This subsection is dedicated to the introduction of the MSPD, which allows to approximate the semigroup solution to diagonal hyperbolic systems in one space dimension. We refer to~\cite{jr} for more details.

\subsubsection{Diagonal hyperbolic systems} Let us fix an integer $d \geq 2$, and consider the diagonal hyperbolic system
\begin{equation}\label{eq:syst}
  \forall \gamma \in \{1, \ldots, d\}, \qquad \left\{\begin{aligned}
    & \partial_t u^{\gamma} + \lambda^{\gamma}(\bu)\partial_x u^{\gamma} = 0, \qquad t \geq 0, \quad x \in \R,\\
    & u^{\gamma}(0,x) = u_0^{\gamma}(x),
  \end{aligned}\right.
\end{equation}
for nonconstant, monotonic and bounded initial data $u^1_0, \ldots, u^d_0$. Once again, we shall assume that there exists $\bm = (m^1, \ldots, m^d) \in \Ps(\R)^d$ such that, for all $\gamma \in \{1, \ldots, d\}$, $u_0^{\gamma} = H*m^{\gamma}$, and look for solutions $\bu = (u^1, \ldots, u^d)$ of~\eqref{eq:syst} such that, for all $t \geq 0$, for all $\gamma \in \{1, \ldots, d\}$, $u^{\gamma}(t,\cdot)$ remains the CDF of a probability measure $m^{\gamma}_t$ on the real line. The characteristic fields $\lambda^1, \ldots, \lambda^d$ are therefore defined on $[0,1]^d$, and we shall extend Assumptions~\eqref{ass:C} and~\eqref{ass:LC} as follows.
\begin{enumerate}[ref=C, label=(C)]
  \item\label{ass:C:2} For all $\gamma \in \{1, \ldots, d\}$, the function $\lambda^{\gamma}$ is continuous on $[0,1]^d$.
\end{enumerate}
Under Assumption~\eqref{ass:C:2}, we denote $\ConstBoundS = \max_{1 \leq \gamma \leq d} \sup_{\bu \in [0,1]^d} |\lambda^{\gamma}(\bu)|$.
\begin{enumerate}[ref=LC, label=(LC)]
  \item\label{ass:LC:2} There exists $\ConstLip \in [0,+\infty)$ such that
  \begin{equation*}
    \forall \gamma \in \{1, \ldots, d\}, \quad \forall \bu,\bv \in [0,1]^d, \qquad |\lambda^{\gamma}(\bu) - \lambda^{\gamma}(\bv)| \leq \ConstLip\sum_{\gamma'=1}^d |u^{\gamma'}-v^{\gamma'}|.
  \end{equation*}
\end{enumerate}
We also require the system to be uniformly strictly hyperbolic, in the sense of the following assumption.
\begin{enumerate}[label=(USH), ref=USH]
  \item\label{ass:USH} There exists $\ConstUSH \in (0,+\infty)$ such that
  \begin{equation*}
    \forall \gamma \in \{1, \ldots, d-1\}, \qquad \inf_{\bu \in [0,1]^d} \lambda^{\gamma}(\bu) - \sup_{\bu \in [0,1]^d} \lambda^{\gamma+1}(\bu) \geq \ConstUSH.
  \end{equation*}
\end{enumerate}
On account of the fact that the system~\eqref{eq:syst} is written in a nonconservative form, both the notions of weak and entropy solution are not canonically defined. An appropriate notion of weak solution is introduced in~\cite[Definition~2.4.1]{jr}, while a criterion for uniqueness is stated in~\cite[Definition~8.2.5]{jr} by adapting the notion of viscosity solution by Bianchini and Bressan~\cite{bianbres}, to which we shall refer as the semigroup solution to~\eqref{eq:syst}. These notions are used in Theorem~\ref{theo:cvMSPD} below.

\subsubsection{Typewise and Multitype Sticky Particle Dynamics} In order to approximate each coordinate $u^{\gamma}$ of the solution to the system~\eqref{eq:syst}, we shall now introduce $d$ systems of $n$ particles evolving on the real line, each system being associated with a type $\gamma \in \{1, \ldots, d\}$, so that the empirical CDF of the system of type $\gamma$ is supposed to approximate $u^{\gamma}$. The $k$-th particle of type $\gamma$ is referred to as the particle $\gamma:k$, and the set of such indices is denoted $\Part$. A configuration is described by an element $\x = (x^{\gamma}_k)_{\gamma:k \in \Part}$ of the Cartesian product $D_n^d$. The normalised $\Ls^1$ distance is defined on $D_n^d$ by
\begin{equation*}
  \|\x-\y\|_1 := \frac{1}{n} \sum_{\gamma:k \in \Part} |x^{\gamma}_k-y^{\gamma}_k|.
\end{equation*}

The first step towards the definition of the MSPD is the introduction of the Typewise Sticky Particle Dynamics (TSPD). Given an array of initial velocities $\bblambda = (\rblambda^{\gamma}_k)_{\gamma:k \in \Part}$ and an initial configuration $\x \in D_n^d$, we denote by
\begin{equation*}
  \tPhi[\bblambda](\x;t) = (\tPhi_k^{\gamma}[\bblambda](\x;t))_{\gamma:k \in \Part}
\end{equation*}
the process in $D_n^d$ obtained by letting the system of type $\gamma$ evolve according to the SPD with initial configuration $\rx^{\gamma} = (x^{\gamma}_1, \ldots, x^{\gamma}_n) \in D_n$ and initial velocity vector $\rblambda^{\gamma} = (\rblambda^{\gamma}_1, \ldots, \rblambda^{\gamma}_n) \in \R^n$, without any interaction with other systems. 

The MSPD is built from the TSPD as follows: 
\begin{enumerate}[ref=\roman*, label=(\roman*)]
  \item the particle $\gamma:k$ is initialised with an array of velocities corresponding to a discretisation of the function $\lambda^{\gamma}$ at a point $\bu \in [0,1]^d$ recording the rank of the particle $\gamma:k$ in each of the $d$ systems, see~\eqref{eq:vitesses} below;
  \item when clusters of particles of different types collide, they cross each other and the TSPD is restarted with initial velocities depending on the post-collisional rank of the particles in each system.
\end{enumerate}
Assumption~\eqref{ass:USH} essentially implies that whatever the arrangement of the particles, particles of lower type always have a larger velocity. This prescribes the post-collisional order to update the velocities of the TSPD, and ensures that clusters of different types drift away from each other immediately after a collision. For an initial configuration $\x \in D_n^d$, the array of initial velocities $\tblambda(\x)=(\tlambda^{\gamma}_k(\x))_{\gamma:k \in \Part}$ is defined under Assumption~\eqref{ass:C:2} by
\begin{equation}\label{eq:vitesses}
  \tlambda_k^{\gamma}(\x) := n \int_{w=(k-1)/n}^{k/n} \lambda^{\gamma}\left(\omega_{\gamma:k}^1(\x), \ldots, \omega_{\gamma:k}^{\gamma-1}(\x), w, \omega_{\gamma:k}^{\gamma+1}(\x), \ldots, \omega_{\gamma:k}^d(\x)\right)\dd w,
\end{equation}
where $\omega^{\gamma'}_{\gamma:k}(\x)$ denotes the (scaled) rank of the particle $\gamma:k$ within the system of type $\gamma'$, formally defined by
\begin{equation*}
  \omega^{\gamma'}_{\gamma:k}(\x) := \begin{cases}
    \frac{1}{n} \sum_{k'=1}^n \ind{x^{\gamma'}_{k'} < x^{\gamma}_k} & \text{if $\gamma'<\gamma$,}\\
    \frac{1}{n} \sum_{k'=1}^n \ind{x^{\gamma'}_{k'} \leq x^{\gamma}_k} & \text{if $\gamma'>\gamma$,}\\
  \end{cases}
\end{equation*}
where particles of different types sharing the same location are counted according to the post-collisional rank imposed by Assumption~\eqref{ass:USH}. 

The resulting dynamics in $D_n^d$ is the MSPD, denoted by $(\Phi(\x;t))_{t \geq 0}$. More details on its construction are given in~\cite[Subsection~3.2]{jr}, where it is proved that it defines a continuous flow. Its stability with respect to the initial configuration is described by the next result, which forms the core of the article~\cite{jr}.

\begin{prop}\cite[Theorem~2.5.2]{jr}\label{prop:stabMSPD}
  Under Assumptions~\eqref{ass:LC:2} and~\eqref{ass:USH}, there exists $\ConstStab_1 \in [1,+\infty)$ depending only on $d$ and the ratio $\ConstLip/\ConstUSH$ such that, for all $n \geq 1$, for all $\x, \y \in D_n^d$, for all $0 \leq s \leq t$,
  \begin{equation*}
    \|\Phi(\x;t)-\Phi(\y;t)\|_1 \leq \ConstStab_1 \|\Phi(\x;s)-\Phi(\y;s)\|_1.
  \end{equation*}
\end{prop}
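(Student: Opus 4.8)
The plan is to bound the rate at which the two trajectories can drift apart by the instantaneous velocity differences they experience, and then to show that these velocity differences, integrated in time, are controlled by the spatial discrepancy thanks to Assumption~\eqref{ass:USH}. Write $g(r) := \|\Phi(\x;r)-\Phi(\y;r)\|_1$. On any bounded time interval the MSPD undergoes only finitely many collision and crossing events, so the time axis splits into finitely many subintervals on which, for \emph{both} the $\x$- and the $\y$-trajectory, each type $\gamma$ evolves as a plain SPD with a fixed velocity vector (the velocities being reset only at crossings). Since the normalised distance is additive over types, on each such subinterval I would apply Proposition~\ref{prop:stabSPD} type by type and sum over $\gamma$; telescoping across subintervals (the positions being continuous even at crossings) yields the integral inequality
\begin{equation*}
  g(t) \leq g(s) + \frac{1}{n}\int_s^t \sum_{\gamma:k \in \Part} \bigl| v^\gamma_k(\x;r) - v^\gamma_k(\y;r)\bigr| \, \dd r,
\end{equation*}
where $v^\gamma_k(\x;r)$ denotes the instantaneous velocity of particle $\gamma:k$ along the $\x$-trajectory. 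The only discrepancy between the two velocity fields comes from the fact that the rank vectors feeding~\eqref{eq:vitesses} differ, so Assumption~\eqref{ass:LC:2} gives the pointwise bound $|v^\gamma_k(\x;r)-v^\gamma_k(\y;r)| \leq \ConstLip \sum_{\gamma'\neq\gamma} |\omega^{\gamma'}_{\gamma:k}(\Phi(\x;r)) - \omega^{\gamma'}_{\gamma:k}(\Phi(\y;r))|$.

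The heart of the argument is to control the time integral of these rank discordances. The quantity $|\omega^{\gamma'}_{\gamma:k}(\Phi(\x;r)) - \omega^{\gamma'}_{\gamma:k}(\Phi(\y;r))|$ counts, up to the factor $1/n$, the type-$\gamma'$ particles sitting on opposite sides of $\gamma:k$ in the two configurations; summing over all indices it equals $n^{-2}$ times the number of \emph{discordant pairs} $(\gamma:k,\gamma':k')$. Assumption~\eqref{ass:USH} makes this tractable: for $\gamma < \gamma'$ the relative position of the two particles moves at speed at least $\ConstUSH$ in each trajectory, so each pair crosses at most once along each trajectory, and it is discordant only during the time window separating its crossing instant along the $\x$-trajectory from its crossing instant along the $\y$-trajectory. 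The length of this window is at most $\ConstUSH^{-1}$ times the discrepancy of the pair's relative position, hence at most $\ConstUSH^{-1}(|x^\gamma_k - y^\gamma_k| + |x^{\gamma'}_{k'} - y^{\gamma'}_{k'}|)$ evaluated at that crossing time. Summing these windows over the $\binom{d}{2}$ ordered type-pairs and all particle indices converts the time integral of the discordances into a spatial bound, giving $g(t) \leq g(s) + (\ConstLip/\ConstUSH)\,C(d)\,\sup_{r} g(r)$ on the interval.

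The main obstacle is to close this estimate into the claimed uniform bound with a constant $\ConstStab_1$ depending only on $d$ and the ratio $\ConstLip/\ConstUSH$, and in particular independent of $n$ and of the time horizon. Two difficulties must be overcome. First, the discrepancies bounding the discordance windows are measured at the various pair-dependent crossing times rather than at the reference time $s$, so one cannot directly factor out $g(s)$; this forces a self-consistent, Gronwall-type argument in which the non-expansivity of the single-type SPD (the equal-velocity case of Proposition~\ref{prop:stabSPD}) is used to control $g$ at crossing times by $g$ at earlier times, crucially exploiting that same-type stickiness dissipates rather than creates discrepancy. Second, one must prevent the pairwise contributions from compounding multiplicatively over successive crossings, which is where a careful combinatorial accounting — most naturally an induction on the number $d$ of types, treating crossings with one fixed type at a time and invoking~\eqref{ass:USH} to guarantee that crossed clusters separate and never re-interact — is needed to keep the final constant of the advertised form. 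I expect this bootstrap, rather than any single inequality above, to be the genuinely delicate part of the proof.
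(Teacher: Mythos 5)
First, a point of order: the present paper does not prove this proposition at all. It is imported from \cite[Theorem~2.5.2]{jr}, and the text explicitly describes that result as forming ``the core of the article''~\cite{jr}. So what you have attempted is a reconstruction of a proof that, in the source, is a long and delicate argument occupying a substantial portion of a separate paper.

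Your local framework is sound as far as it goes: splitting time at crossing events, applying Proposition~\ref{prop:stabSPD} typewise and telescoping, bounding velocity discrepancies by rank discordances via Assumption~\eqref{ass:LC:2}, and using Assumption~\eqref{ass:USH} to ensure each inter-type pair crosses at most once, with a discordance window of length at most $\ConstUSH^{-1}$ times the pair's positional discrepancy at its crossing time. (A bookkeeping caveat: the instantaneous velocity of an MSPD particle is a cluster average of the coefficients~\eqref{eq:vitesses} evaluated at the configuration of the \emph{last restart}, not at the current configuration, so your pointwise velocity bound must be formulated with ranks frozen at restart times; this is fixable.) The genuine gap is the closing step, which you acknowledge but do not supply, and which is in fact the entire theorem. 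Carrying your own accounting to its end yields
\begin{equation*}
  \sup_{r\in[s,t]} g(r) \;\leq\; g(s) + (d-1)\,\frac{\ConstLip}{\ConstUSH}\,\sup_{r\in[s,t]} g(r),
\end{equation*}
which is vacuous unless $(d-1)\ConstLip/\ConstUSH<1$; no such smallness is assumed, since the claimed constant $\ConstStab_1$ must exist for \emph{arbitrary} values of the ratio $\ConstLip/\ConstUSH$. Neither of the two escape routes you gesture at can work as sketched. A Gronwall argument is structurally unavailable: your source term is proportional to $\sup_r g(r)$, not to $\int_s^t g(r)\,\dd r$ (rank discordances are not controlled pointwise in time by positional discrepancies), and even if it were, Gronwall would produce a factor $e^{C(t-s)}$, incompatible with the time-uniform constant the statement asserts. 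The induction on $d$, ``treating crossings with one fixed type at a time,'' ignores that a crossing between types $\gamma<\gamma'$ perturbs the velocities, hence the positions, hence the future crossing times of those particles with \emph{every} other type, so discordance windows compound across type pairs and across successive crossings --- precisely the multiplicative cascade you say must be prevented. Controlling that cascade uniformly in $n$ and in time is not a final technical step; it is the content of \cite[Theorem~2.5.2]{jr}. As it stands, your proposal establishes the easy local estimate and correctly locates the difficulty, but does not prove the statement.
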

In contrast with Proposition~\ref{prop:stabSPD}, we note that no stability result with respect to the characteristic fields $\lambda^1, \ldots, \lambda^d$ is available.

\subsubsection{Approximation of the diagonal hyperbolic system} Similarly to the scalar case, we define the empirical distribution of the system of type $\gamma$ in the MSPD at time $t \geq 0$ as the probability measure on the real line given by
\begin{equation*}
  \mu^{\gamma}_t[\x] := \frac{1}{n} \sum_{k=1}^n \delta_{\Phi^{\gamma}_k(\x;t)},
\end{equation*}
and the vector of empirical CDFs $\bu_n[\x] = (u^1_n[\x], \ldots, u^d_n[\x])$ by
\begin{equation*}
  u^{\gamma}_n[\x](t,x) := H*\mu^{\gamma}_t[\x](x) = \frac{1}{n} \sum_{k=1}^n \ind{\Phi^{\gamma}_k(\x;t) \leq x}.
\end{equation*}

It is proved in~\cite[Proposition~4.2.1]{jr} that, for all $\x \in D_n^d$, $\bu_n[\x]$ is a weak solution to the system~\eqref{eq:syst}. Given a sequence of initial configurations $(\x(n))_{n \geq 1}$ such that, for all $n \geq 1$, $\x \in D_n^d$ and, for all $\gamma \in \{1, \ldots, d\}$, the empirical distribution
\begin{equation*}
  \mu^{\gamma}_0[\x(n)] = \frac{1}{n} \sum_{k=1}^n \delta_{x^{\gamma}_k(n)}
\end{equation*}
converges weakly to some $m^{\gamma} \in \Ps(\R)$, one could by analogy with Theorem~\ref{theo:cvSPD} expect $\bu_n[\x(n)]$ to converge to a weak solution of the system~\eqref{eq:syst} with initial data $u^{\gamma}_0 = H*m^{\gamma}$, satisfying some specific entropy-like condition making it unique and physically meaningful. Although it is true that, up to extracting a subsequence, $\bu_n[\x(n)]$ actually converges to a weak solution of the system~\cite[Theorem~2.4.5]{jr}, following Bianchini and Bressan's construction we were only able in~\cite{jr} to identify the limit if it satisfies some semigroup and stability estimate with respect to the initial data. For this purpose, we introduced~\cite[Definition~2.6.4]{jr} a discretisation operator $\chi_n : \Ps(\R)^d \to D_n^d$, defined by $\chi_n\bm = \x(n)$ with
\begin{equation}\label{eq:chin}
  \forall \gamma:k \in \Part, \qquad x_k^{\gamma}(n) = (n+1)\int_{w=(2k-1)/(2(n+1))}^{(2k+1)/(2(n+1))} (H*m^{\gamma})^{-1}(w)\dd w.
\end{equation}
For all $\gamma \in \{1, \ldots, d\}$, the empirical measure $\frac{1}{n} \sum_{k=1}^n \delta_{x_k^{\gamma}(n)}$ converges weakly to $m^{\gamma}$, see~\cite[Lemma~8.1.5]{jr}, and we obtained the following convergence theorem for the MSPD, which follows from the more general statement of~\cite[Theorem~2.6.5]{jr}.

\begin{theo}\label{theo:cvMSPD}
  Under Assumptions~\eqref{ass:LC:2} and~\eqref{ass:USH}, let $\bm = (m^1, \ldots, m^d) \in \Ps(\R)^d$. 
  
  For all $t \geq 0$, for all $\gamma \in \{1, \ldots, d\}$, the empirical distribution $\mu^{\gamma}_t[\chi_n\bm]$ converges weakly to the probability measure $m^{\gamma}_t \in \Ps(\R)$ such that $\bu = (u^1, \ldots, u^d)$ defined by $u^{\gamma}(t,x) = H*m^{\gamma}_t(x)$ is the unique semigroup solution to the diagonal hyperbolic system~\eqref{eq:syst} with initial data $u^{\gamma}_0 = H*m^{\gamma}$. Equivalently, for all $t \geq 0$, for all $\gamma \in \{1, \ldots, d\}$, the empirical CDF $u^{\gamma}_n[\chi_n\bm](t,\cdot)$ converges $\dd x$-almost everywhere to $u^{\gamma}(t, \cdot)$.
  
  Each coordinate $u^{\gamma}$ of the semigroup solution $\bu$ satisfies the preservation of total variation and finite speed of propagation properties stated in Theorem~\ref{theo:kruz}. The stability property writes as follows: for all $\bm, \bm' \in \Ps(\R)^d$, the semigroup solutions $\bu$ and $\bv$ to the system~\eqref{eq:syst} with respective initial data $\bu_0$, $\bv_0$ defined by $u^{\gamma}_0 = H*m^{\gamma}$, $v^{\gamma}_0 = H*m'^{\gamma}$ satisfy, for all $0 \leq s \leq t$,
  \begin{equation*}
    \begin{aligned}
      \|\bu(t,\cdot)-\bv(t,\cdot)\|_{\Ls^1(\R)^d} & := \sum_{\gamma=1}^d \|u^{\gamma}(t,\cdot)-v^{\gamma}(t,\cdot)\|_{\Ls^1(\R)}\\
      & \leq \ConstStab_1\sum_{\gamma=1}^d \|u^{\gamma}(s,\cdot)-v^{\gamma}(s,\cdot)\|_{\Ls^1(\R)} =: \ConstStab_1\|\bu(s,\cdot)-\bv(s,\cdot)\|_{\Ls^1(\R)^d}.
    \end{aligned}
  \end{equation*}
\end{theo}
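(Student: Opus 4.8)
The plan is to obtain this statement as a specialisation of the general convergence result~\cite[Theorem~2.6.5]{jr}, checking that the discretisation operator $\chi_n$ meets its hypotheses and then reading off the three assertions---weak convergence, the regularity of the limit, and the stability estimate. The hypothesis to verify is that the discretised empirical measures $\frac1n\sum_{k=1}^n\delta_{x^\gamma_k(n)}$ associated with $\x(n)=\chi_n\bm$ converge weakly to $m^\gamma$ for each $\gamma$, which is exactly~\cite[Lemma~8.1.5]{jr}. The two equivalent formulations of convergence (weak convergence of $\mu^\gamma_t[\chi_n\bm]$ and $\dd x$-a.e.\ convergence of the empirical CDFs) are then interchangeable through the $\Ls^1$--$\Ws_1$ identity recalled in~\eqref{eq:W1L1}.

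For the convergence and the identification of the limit, I would argue as follows. Each $u^\gamma_n[\chi_n\bm](t,\cdot)$ is a CDF, hence of total variation $1$, and the initial velocities~\eqref{eq:vitesses} are bounded in modulus by $\ConstBoundS$ uniformly in $n$, which yields a uniform finite-speed-of-propagation control. These a priori bounds provide the compactness needed to extract a subsequential limit $\bu$, which by~\cite[Proposition~4.2.1]{jr} and~\cite[Theorem~2.4.5]{jr} is a weak solution of~\eqref{eq:syst}. The delicate point---and the genuine content of~\cite[Theorem~2.6.5]{jr}---is that, following the viscosity-solution framework of Bianchini and Bressan~\cite{bianbres}, this weak solution can be singled out as the unique semigroup solution only once it is shown to satisfy the appropriate semigroup and stability estimates. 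Granting this identification, every subsequential limit coincides with the semigroup solution $\bu$, so the full sequence converges.

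The regularity properties of each coordinate $u^\gamma$ pass to the limit directly: preservation of total variation is inherited from the fact that every $u^\gamma_n[\chi_n\bm](t,\cdot)$ is a CDF and is preserved under $\dd x$-a.e.\ convergence, while the finite speed of propagation follows from the uniform velocity bound $|\tlambda^\gamma_k(\x)|\leq\ConstBoundS$, which forbids any particle from travelling faster than $\ConstBoundS$ and survives in the limit. For the stability estimate, the key observation is that, since the flow $\Phi$ keeps each type-$\gamma$ system ordered within $D_n$, the normalised distance on $D_n^d$ coincides coordinatewise with the $\Ls^1$ distance between empirical CDFs: by~\eqref{eq:W1L1}, for all $\x,\y\in D_n^d$ and all $t\geq0$,
\begin{equation*}
  \|\Phi(\x;t)-\Phi(\y;t)\|_1 = \sum_{\gamma=1}^d \frac1n\sum_{k=1}^n |\Phi^\gamma_k(\x;t)-\Phi^\gamma_k(\y;t)| = \|\bu_n[\x](t,\cdot)-\bu_n[\y](t,\cdot)\|_{\Ls^1(\R)^d}.
\end{equation*}
Rewriting Proposition~\ref{prop:stabMSPD} through this identity gives the discrete stability inequality with constant $\ConstStab_1$ at the level of empirical CDFs; taking $\x=\chi_n\bm$, $\y=\chi_n\bm'$ and letting $n\to+\infty$ then yields the continuous estimate. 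Here the lower semicontinuity of the $\Ls^1$ norm under $\dd x$-a.e.\ convergence controls the left-hand side, whereas the right-hand side is handled by the $\Ls^1$--$\Ws_1$ identity together with the uniform tail control from finite speed of propagation---the inequality being trivial whenever the right-hand side is infinite.

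I expect the main obstacle to be precisely the identification of the subsequential limit as the \emph{unique} semigroup solution: this cannot be extracted from the discrete stability estimate alone and genuinely rests on the Bianchini--Bressan-type uniqueness machinery developed in~\cite{jr}. By contrast, once that identification is granted, the remaining assertions reduce to combining the $\Ls^1$--$\Ws_1$ isometry with the flow stability of Proposition~\ref{prop:stabMSPD} and passing the uniform total-variation and speed-of-propagation bounds to the limit.
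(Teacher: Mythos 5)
Your proposal matches the paper's own treatment: the paper obtains Theorem~\ref{theo:cvMSPD} exactly as you do, by specialising the more general convergence result of~\cite[Theorem~2.6.5]{jr} to the discretisation operator $\chi_n$, with~\cite[Lemma~8.1.5]{jr} supplying the required weak convergence of the discretised initial data. Your additional sketches (the compactness/identification mechanism inside~\cite{jr}, and the derivation of the stability estimate by passing to the limit in Proposition~\ref{prop:stabMSPD} via the $\Ls^1$--$\Ws_1$ identity) go beyond what the paper records, but they are consistent with it and the one concrete identity you state is correct.
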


The notion of semigroup solution is adapted from~\cite{bianbres} and detailed in~\cite[Definition~8.2.5]{jr}.

In Corollary~\ref{cor:cvMSPD}, we shall generalise this result and prove the convergence of the empirical cumulative distribution functions of the MSPD to the semigroup solution for a large class of sequences of initial configurations $(\x(n))_{n \geq 1}$ approximating a vector of initial measures $\bm = (m^1, \ldots, m^d)$ such that, for all $\gamma \in \{1, \ldots, d\}$, $m^{\gamma}$ has a finite first-order moment.

\subsubsection{Rate of convergence and approximation by the iterated TSPD} The second purpose of this paper is to supplement Theorem~\ref{theo:cvMSPD} with a rate of convergence of $\bu_n[\chi_n\bm](t,\cdot)$, or more generally $\bu_n[\x(n)](t,\cdot)$ for a sequence of initial configurations $(\x(n))_{n \geq 1}$ approximating $\bm$, to $\bu(t,\cdot)$. As in the scalar case, we split the approximation error by writing
\begin{equation*}
  \begin{aligned}
    & \|\bu(t, \cdot) - \bu_n[\x(n)](t, \cdot)\|_{\Ls^1(\R)^d}\\
    & \qquad \leq \|\bu(t, \cdot) - \bu_{\infty}[\x(n)](t, \cdot)\|_{\Ls^1(\R)^d} + \|\bu_{\infty}[\x(n)](t, \cdot) - \bu_n[\x(n)](t, \cdot)\|_{\Ls^1(\R)^d}\\
    & \qquad \leq \ConstStab_1 \|\bu_0 - \bu_{\infty,0}[\x(n)]\|_{\Ls^1(\R)^d} + |\bu_{\infty}[\x(n)](t, \cdot) - \bu_n[\x(n)](t, \cdot)\|_{\Ls^1(\R)^d},
  \end{aligned} 
\end{equation*}
where $\bu_{\infty}[\x(n)]$ is the semigroup solution, in the sense of Theorem~\ref{theo:cvMSPD}, of the system~\eqref{eq:syst} with initial condition $\bu_{\infty,0}[\x(n)]$ defined by, for all $\gamma \in \{1, \ldots, d\}$, $u^{\gamma}_{\infty,0}[\x(n)] = H*\mu^{\gamma}_0[\x(n)]$. In the last line above, we used the stability estimate of Theorem~\ref{theo:cvMSPD}Â on semigroup solutions.

The estimation of the discretisation error of the initial data follows from the analysis of the scalar case carried out in Section~\ref{s:ci}. We shall use the distance $\Ws_1^{(d)}$ on $\Ps(\R)^d$ defined by, for all $\bm=(m^1, \ldots, m^d), \bm'=(m'^1, \ldots, m'^d) \in \Ps(\R)^d$,
\begin{equation}\label{eq:W1d}
  \Ws_1^{(d)}(\bm,\bm') = \sum_{\gamma=1}^d \Ws_1(m^{\gamma},m'^{\gamma}) = \sum_{\gamma=1}^d \|H*m^{\gamma}-H*m'^{\gamma}\|_{\Ls^1(\R)} = ||\bu-\bv||_{\Ls^1(\R)^d},
\end{equation}
with $\bu:= (H*m^{1},\cdots,H*m^{d})$, $\bv:= (H*m'^{\gamma},\cdots,H*m'^{d})$.

The analysis of the error due to the evolution of the MSPD turns out to be more delicate than in the scalar case, and we shall resort to an approximation of this dynamics by the following scheme. Fix a time step $\Delta > 0$ and define the process $(\tPhi_{\Delta}(\x;t))_{t \geq 0}$ in $D_n^d$ by letting, on each time interval $[(L-1)\Delta, L\Delta]$, $L \geq 1$, the particles evolve according to the TSPD with initial velocity vector $\tblambda(\tPhi_{\Delta}(\x;(L-1)\Delta))$. This amounts to neglecting the collisions between clusters of different types on $[(L-1)\Delta, L\Delta]$ and only updating the velocities with respect to the new ordering of the system at the end of each such interval. From a computational point of view, this approximated dynamics is expected to be easier to simulate than the MSPD, as one does not have to keep track of all the collisions. The approximation error induced by this scheme is addressed in Section~\ref{s:syst}, where we also use it as a theoretical tool to obtain rates of convergence in Theorem~\ref{theo:rateMSPD}. We finally present a numerical implementation of this scheme in Section~\ref{s:num}.


\section{\texorpdfstring{$\Ws_1$}{W1} discretisation error of initial conditions}\label{s:ci}

In this section, we want to approximate the probability measure $m$ on the real line, with CDF $F = H*m$, by the empirical measure 
\begin{equation*}
  \mu^n=\frac{1}{n}\sum_{k=1}^n\delta_{x_k}
\end{equation*}
of a vector $\rx=(x_1, \ldots, x_n)$ of $n$ deterministic points on the real line, with $x_1\leq \cdots \leq x_n$. On account of~\eqref{eq:W1L1}, one has
\begin{equation*}
   \Ws_1(m,\mu^n)=\sum_{k=1}^n\int_{y=F^{-1}((k-1)/n)}^{F^{-1}(k/n)}|y-x_k|m(\dd y),
\end{equation*}
where we recall the definition~\eqref{eq:pseudoinv} of the pseudo-inverse $F^{-1}$ and complement it with the convention $F^{-1}(0)=\inf_{v\in(0,1)}F^{-1}(v)$ and $F^{-1}(1)=\sup_{v\in(0,1)}F^{-1}(v)$.

The choice 
\begin{equation}\label{eq:optx}
  x_k=F^{-1}\left(\frac{2k-1}{2n}\right),
\end{equation}
the median of the image of the uniform law on $[(k-1)/n,k/n]$ by $F^{-1}$, for all $k\in\{1,\ldots,n\}$, minimises this Wasserstein distance. Let us now analyse this optimal choice. First,
\begin{equation*}
  \begin{aligned}
    \Ws_1(m,\mu^n) & = \sum_{k=1}^n\int_{u=(2k-1)/(2n)}^{k/n}\left(F^{-1}(u)-F^{-1}(u-1/(2n))\right)\dd u\\
    & \leq \int_{u=1/(2n)}^1\left(F^{-1}(u)-F^{-1}(u-1/(2n))\right)\dd u.
  \end{aligned}
\end{equation*}
When 
\begin{equation*}
  \int_{u=0}^1|F^{-1}(u)|\dd u<+\infty,
\end{equation*}
that is to say $m$ has a finite first order moment, one deduces that 
\begin{equation*}
  \Ws_1(m,\mu^n)\leq\int_{u=1-1/(2n)}^1 F^{-1}(u)\dd u-\int_{u=0}^{1/(2n)}F^{-1}(u)\dd u,
\end{equation*}
where the right-hand side goes to $0$ when $n\to+\infty$ by Lebesgue's theorem. With the discussion of the case where the first order moment of $m$ is infinite at the end of~\S\ref{sss:rateSPD}, we deduce the first statement in the next lemma.

\begin{lem}\label{lem:discr}
  Let $\mu^n$ be defined as in the discussion above. The behaviour of $\Ws_1(m,\mu^n)$ depends on the tail of $m$ as is described below.
  \begin{enumerate}[label=(\roman*), ref=\roman*]
    \item\label{it:discr:1} One has $\lim_{n\to+\infty}\Ws_1(m,\mu^n)=0$ if and only if $m$ has a finite first order moment.
    \item\label{it:discr:2} For all $n \geq 1$,
    \begin{equation*}
      \Ws_1(m,\mu^n) \leq \frac{1}{\sqrt{n}} \int_{x \in \R}\sqrt{F(x)(1-F(x))}\dd x,
    \end{equation*}
    where the right-hand side may be infinite.
    \item\label{it:discr:3} If there exist $-\infty<a<b<+\infty$ such that $m([a,b])=1$, then for all $n \geq 1$,
    \begin{equation*}
      \Ws_1(m,\mu^n) \leq \frac{b-a}{2n}.
    \end{equation*}
    \item\label{it:discr:4} If $m(\dd x)=\ind{x \in [a,b]}f(x)\dd x$ with $f$ positive on $[a,b]$ where $-\infty<a<b<+\infty$, then 
    \begin{equation*}
      \lim_{n\to+\infty} n\Ws_1(m,\mu^n)=\frac{b-a}{4}.
    \end{equation*}
  \end{enumerate}
\end{lem}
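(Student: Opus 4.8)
The plan is to work with the quantile representation of the Wasserstein distance. By~\eqref{eq:W1L1} and the definition of $\mu^n$, since the pseudo-inverse of $H*\mu^n$ equals the optimal point $x_k=F^{-1}((2k-1)/(2n))$ of~\eqref{eq:optx} on each interval $((k-1)/n,k/n]$, one may write, with $u_k := (2k-1)/(2n)$,
\begin{equation*}
  \Ws_1(m,\mu^n) = \sum_{k=1}^n \int_{u=(k-1)/n}^{k/n}|F^{-1}(u) - F^{-1}(u_k)|\dd u.
\end{equation*}
Rescaling each cell through $u = u_k + s/n$ with $s \in (-1/2,1/2)$ (no boundary effect occurs, as $u_k+s/n$ stays in $(0,1)$) and using Fubini's theorem for the nonnegative integrand yields the exact identity
\begin{equation*}
  n\Ws_1(m,\mu^n) = \int_{s=-1/2}^{1/2} S_n(s)\dd s, \qquad S_n(s) := \sum_{k=1}^n |F^{-1}(u_k + s/n) - F^{-1}(u_k)|.
\end{equation*}
I would then prove that $S_n(s) \to (b-a)|s|$ for each fixed $s$ and pass to the limit under the integral sign.

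The crucial structural fact I would establish first, and which is where the positivity of $f$ is genuinely used, is that $F$ is a continuous strictly increasing bijection from $[a,b]$ onto $[0,1]$ whose inverse $F^{-1}$ is \emph{absolutely continuous}. This follows from the Banach--Zarecki theorem: $F^{-1}$ is continuous and monotone, hence of bounded variation, and it satisfies Luzin's property~(N), since for any Lebesgue-null set $E \subset [0,1]$ the change-of-variables formula for the strictly increasing absolutely continuous map $F$ gives $\mathrm{Leb}(E)=\mathrm{Leb}(F(F^{-1}(E)))=\int_{F^{-1}(E)} f\,\dd x$, so that $\int_{F^{-1}(E)} f\,\dd x=0$, which forces $\mathrm{Leb}(F^{-1}(E))=0$ because $f>0$. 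Consequently $(F^{-1})' = 1/f(F^{-1})$ almost everywhere, $(F^{-1})' \in \Ls^1(0,1)$, and $\int_0^1 (F^{-1})'(v)\dd v = F^{-1}(1) - F^{-1}(0) = b - a$. Without positivity, $F^{-1}$ could carry a singular part and this last identity, hence the value of the limit, would break down; this is the main obstacle of the proof.

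Granting absolute continuity, for $s>0$ the increments write $S_n(s) = \int_0^1 \mathds{1}_{B_n(s)}(v)(F^{-1})'(v)\dd v$, where $B_n(s) = \bigcup_{k=1}^n [u_k, u_k + s/n)$ is a disjoint union (the cells are spaced by $1/n$ while the intervals have length $s/n<1/(2n)$) of total length $s$; the case $s<0$ is symmetric. The sets $B_n(s)$ are periodic of density $|s|$, so $\mathds{1}_{B_n(s)} \to |s|$ weakly-$*$ in $\Ls^\infty(0,1)$, an elementary equidistribution fact proved by testing against step functions and invoking their density in $\Ls^1$ together with the uniform bound $\|\mathds{1}_{B_n(s)}\|_{\Ls^\infty}\le 1$. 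Pairing against $(F^{-1})' \in \Ls^1(0,1)$ then gives $S_n(s) \to |s|\int_0^1 (F^{-1})' = |s|(b-a)$.

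Finally, the same representation gives the uniform bound $S_n(s) \le \int_0^1 (F^{-1})' = b-a$ for all $n$ and all $s$, so dominated convergence on the bounded interval $(-1/2,1/2)$ applies and yields
\begin{equation*}
  \lim_{n\to+\infty} n\Ws_1(m,\mu^n) = \int_{s=-1/2}^{1/2}|s|(b-a)\dd s = \frac{b-a}{4}.
\end{equation*}
Once the absolute continuity of $F^{-1}$ is secured, every remaining step is routine. As an equivalent shortcut one may instead apply Fubini before taking any limit, obtaining $n\Ws_1(m,\mu^n) = \int_0^1 (F^{-1})'(v)\,\Psi_n(v)\dd v$ with the tent-shaped weight $\Psi_n(v) = 1/2 - n\,\mathrm{dist}(v,\{u_1,\dots,u_n\})$, and concluding from the single weak-$*$ limit $\Psi_n \to 1/4$.
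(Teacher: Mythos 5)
Your argument addresses only assertion~\eqref{it:discr:4} of Lemma~\ref{lem:discr}; assertions~\eqref{it:discr:1}, \eqref{it:discr:2} and~\eqref{it:discr:3} are never proved, and this is a genuine gap rather than a formality. Assertion~\eqref{it:discr:2} in particular cannot be reached by refining your quantile identity: the paper's proof rests on a completely different idea, namely comparing the optimal discretisation $\mu^n$ with the empirical measure $\frac{1}{n}\sum_{k=1}^n\delta_{X_k}$ of i.i.d.\ variables $X_1,\ldots,X_n$ of law $m$ (legitimate by optimality of the choice~\eqref{eq:optx}), then bounding $\Exp[\Ws_1]$ by $\Exp^{1/2}[\Ws_1^2]$ and computing $\Exp[(F(x)-\frac{1}{n}\sum_k\ind{X_k\leq x})^2]=F(x)(1-F(x))/n$, following Bobkov and Ledoux; this is where the factor $1/\sqrt{n}$ and the integrand $\sqrt{F(1-F)}$ come from. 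Assertion~\eqref{it:discr:3} is within reach of your setup but needs one observation you did not make: for $0<s<1/2$ the sets $B_n(s)$ and $B_n(-s)$ are disjoint, so monotonicity of $F^{-1}$ gives $S_n(s)+S_n(-s)\leq b-a$, whence $n\Ws_1(m,\mu^n)=\int_{s=0}^{1/2}(S_n(s)+S_n(-s))\,\dd s\leq (b-a)/2$; the bound $S_n(s)\leq b-a$ alone, which is all your text provides, only yields $(b-a)/n$ and loses the factor $2$. (Assertion~\eqref{it:discr:1} is essentially settled in the discussion preceding the lemma, but a blind proof should still record both directions: dominated convergence of $\int_{u=1-1/(2n)}^1 F^{-1}(u)\dd u-\int_{u=0}^{1/(2n)}F^{-1}(u)\dd u$ when the first moment is finite, and $\Ws_1(m,\mu^n)=+\infty$ otherwise.)

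For assertion~\eqref{it:discr:4} itself, your proof is correct, and it takes a genuinely different route from the paper's. You obtain the absolute continuity of $F^{-1}$ from the Banach--Zarecki theorem (this is indeed where the positivity of $f$ enters, in both proofs), represent $S_n(s)$ as $\int_{B_n(s)}(F^{-1})'(v)\dd v$, and conclude by weak-$*$ equidistribution of the sets $B_n(s)$ together with dominated convergence in $s$ (or, in your shortcut, the single weak-$*$ limit $\Psi_n\to 1/4$). The paper avoids Banach--Zarecki altogether: since $m$ is the image of the Lebesgue measure on $[0,1]$ by $F^{-1}$, the identity $F^{-1}(u)-F^{-1}(u-1/(2n))=\int_{v=u-1/(2n)}^{u}\dd v/f(F^{-1}(v))$ follows from a one-line change of variables, and the limit is then extracted via the $\Ls^1$-continuity of translations applied to $1/f(F^{-1})\in\Ls^1(0,1)$, combined with the weak convergence of $2\sum_{k}\ind{u\in((2k-1)/(2n),k/n)}\dd u$ to $\ind{u\in(0,1)}\dd u$. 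The two mechanisms are equivalent (your $(F^{-1})'=1/f(F^{-1})$ a.e.), but the paper's derivation is shorter and more self-contained, while yours isolates the general structure --- absolute continuity of the quantile function plus equidistribution --- at the price of heavier real-analysis machinery.
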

\begin{proof}
  The assertion~\eqref{it:discr:2} follows from the comparison of $\Ws_1(m,\mu^n)$ with the expected Wasserstein distance between $m$ and the empirical measure $\frac{1}{n}\sum_{k=1}^n \delta_{X_k}$ of independent random variables $X_1, \ldots, X_n$ with identical distribution $m$, a detailed study of which was carried out by Bobkov and Ledoux~\cite{BobLed14}. By the optimality of the choice of $\mu^n$, we first have
  \begin{equation*}
    \Ws_1(m,\mu^n)\leq\Exp\left[\Ws_1\left(m,\frac{1}{n}\sum_{k=1}^n\delta_{X_k}\right)\right].
  \end{equation*}
  According to the proof of~\cite[Theorem~3.2]{BobLed14}, the right-hand side is not greater than
  \begin{equation*}
    \begin{aligned}
      \Exp^{1/2}\left[\Ws_1^2\left(m,\frac{1}{n}\sum_{k=1}^n\delta_{X_k}\right)\right]&=\Exp^{1/2}\left[\left(\int_{x \in \R}\left|F(x)-\frac{1}{n}\sum_{k=1}^n\ind{X_k\leq x}\right|\dd x\right)^2\right]\\
      & \leq \int_{x \in \R}\Exp^{1/2}\left[\left(F(x)-\frac{1}{n}\sum_{k=1}^n\ind{X_k\leq x}\right)^2\right]\dd x\\
      & =\frac{1}{\sqrt{n}} \int_{x \in \R}\sqrt{F(x)(1-F(x))}\dd x.
    \end{aligned}
  \end{equation*}

  If $m([a,b])=1$, then the mass of the measure $\nu$ on $(0,1)$ such that $\nu((0,u))=F^{-1}(u)-F^{-1}(0^+)$ for all $u\in(0,1)$ is smaller than $b-a$. Moreover,
  \begin{equation*}
    \begin{aligned}
      \Ws_1(m,\mu^n) & =\sum_{k=1}^n\int_{u=(2k-1)/(2n)}^{k/n}\int_{v=0}^1 \ind{u-1/(2n)\leq v<u}\nu(\dd v)\dd u\\
      & =\sum_{k=1}^n \int_{v \in [(k-1)/n,k/n)}\min\left(v-\frac{k-1}{n},\frac{k}{n}-v\right)\nu(\dd v)\\
      & \leq \frac{\nu((0,1))}{2n}.
    \end{aligned}
  \end{equation*}

  Let us finally assume that $m(\dd x)=\ind{x \in [a,b]}f(x)\dd x$ with $f$ positive on $[a,b]$. Since $m$ is the image of the Lebesgue measure on $[0,1]$ by $F^{-1}$, one has 
  \begin{equation*}
    F^{-1}(u)-F^{-1}(u-1/(2n)) = \int_{x=F^{-1}(u-1/(2n))}^{F^{-1}(u)}\frac{m(\dd x)}{f(x)}=\int_{v=u-1/(2n)}^u\frac{\dd v}{f(F^{-1}(v))}
  \end{equation*}
  for all $u \in (1/(2n), 1)$. The continuity of translations in $\Ls^1$ ensures that 
  \begin{equation*}
    \lim_{n \to +\infty} \int_{u=0}^1 \left|n\ind{u \in (1/(2n),1)}\left(F^{-1}(u)-F^{-1}(u-1/(2n))\right) - \frac{1}{2f(F^{-1}(u))}\right| \dd u = 0.
  \end{equation*}
  Since
  \begin{equation*}
    \begin{aligned}
      & \left|n\Ws_1(m,\mu^n)-\frac{1}{2}\sum_{k=1}^n\int_{u=(2k-1)/(2n)}^{k/n}\frac{\dd u}{f(F^{-1}(u))}\right|\\
      & \leq \int_{u=0}^1 \left|n\ind{u \in (1/(2n),1)}\left(F^{-1}(u)-F^{-1}(u-1/(2n))\right)-\frac{1}{2f(F^{-1}(u))}\right|\dd u,
    \end{aligned}
  \end{equation*}
  it is enough to check that 
  \begin{equation*}
    \lim_{n \to +\infty} \sum_{k=1}^n\int_{u=(2k-1)/(2n)}^{k/n}\frac{\dd u}{f(F^{-1}(u))} = \frac{1}{2}\int_{u=0}^1\frac{\dd u}{f(F^{-1}(u))}.
  \end{equation*}
  This follows from the weak convergence of $2\sum_{k=1}^n\ind{u \in ((2k-1)/(2n),k/n)}\dd u$ to $\ind{u \in (0,1)}\dd u$ and the density of continuous and bounded functions in $\Ls^1(\R)$.
\end{proof}

\begin{rk}
  Since 
  \begin{equation*}
    \int_{u=0}^1|F^{-1}(u)|\dd u=\int_{x\in \R}|x|m(\dd x) = \int_{x=0}^{+\infty} ((1-F(x)) + F(-x))\dd x ,
  \end{equation*}
  the condition 
  \begin{equation*}
    \int_{u=0}^1|F^{-1}(u)|\dd u<+\infty
  \end{equation*}
  is equivalent to 
  \begin{equation*}
    \int_{x \in \R}F(x)(1-F(x))\dd x<+\infty.
  \end{equation*}
  Similarly, the condition 
  \begin{equation*}
    \int_{x \in \R}\sqrt{F(x)(1-F(x))}\dd x < +\infty
  \end{equation*}
  is an assumption on the decay of the tails of $m$. Since $F(x)(1-F(x))\leq \sqrt{F(x)(1-F(x))}$, it implies that 
  \begin{equation*}
    \int_{x \in \R}|x|m(\dd x) < +\infty.
  \end{equation*}
  On the other hand, if for some $\alpha>2$, 
  \begin{equation*}
    \int_{x \in \R} |x|^\alpha m(\dd x) < +\infty,
  \end{equation*}
  then 
  \begin{equation*}
    \int_{x \in \R}(1+|x|^{\alpha-1})F(x)(1-F(x))\dd x<+\infty
  \end{equation*}
  and, by the Cauchy-Schwarz inequality,
  \begin{equation*}
    \left(\int_{x \in \R}\sqrt{F(x)(1-F(x))}\dd x\right)^2 \leq \int_{x \in \R}(1+|x|^{\alpha-1})^{-1}\dd x \int_{x \in \R}(1+|x|^{\alpha-1})F(x)(1-F(x))\dd x < +\infty.
  \end{equation*}
\end{rk}

We finally discuss some other cases where $\Ws_1(m,\mu^n)$ can be estimated. If $m$ has a positive density $f$ on $(0,\infty)$, one has
\begin{equation*}
  \begin{aligned}
    \Ws_1(m,\mu^n) & =\sum_{k=1}^n\int_{v=(k-1)/n}^{k/n}\min\left(v-\frac{k-1}{n},\frac{k}{n}-v\right)\frac{\dd v}{f(F^{-1}(v))}\\
    & \leq \frac{1}{2n}\int_{v=0}^{1-1/(2n)}\frac{\dd v}{f(F^{-1}(v))} + \int_{1-1/(2n)}^1\frac{(1-v)\dd v}{f(F^{-1}(v))}\\
    & = \frac{1}{2n}\left(F^{-1}\left(1-\frac{1}{2n}\right)-F^{-1}(0^+)\right) + \int_{v= 1-1/(2n)}^1\frac{(1-v)\dd v}{f(F^{-1}(v))}.
  \end{aligned}
\end{equation*}
On the other hand, if $u \mapsto 1/f(F^{-1}(u))$ is nondecreasing on $(\underline{u},1)$ with $\underline{u}\in(0,1)$, then
\begin{equation*}
  \begin{aligned}
    \Ws_1(m,\mu^n) & =\sum_{k=1}^n\int_{v=(k-1)/n}^{k/n}\min\left(v-\frac{k-1}{n},\frac{k}{n}-v\right)\frac{\dd v}{f(F^{-1}(v))}\\
    & \geq \frac{1}{4n}\sum_{k=1}^n\int_{v=(4k-3)/(4n)}^{(4k-1)/(4n)}\frac{\dd v}{f(F^{-1}(v))}\\
    & \geq \frac{1}{8n}\sum_{k\geq n\underline{u}+5/4}\int_{v=(4k-5)/(4n)}^{(4k-1)/(4n)}\frac{\dd v}{f(F^{-1}(v))}\\
    & \geq \frac{1}{8n}\int_{\underline{u}+1/4n}^{1-1/4n}\frac{\dd v}{f(F^{-1}(v))}\\
    & = \frac{1}{8n}\left(F^{-1}\left(1-\frac{1}{4n}\right)-F^{-1}\left(\underline{u}+\frac{1}{4n}\right)\right).
  \end{aligned}
\end{equation*}
We apply these estimates on the following two examples.
\begin{itemize}
  \item For $F(x)=\ind{x>0}(1-x^{-\alpha})$ with $\alpha>1$, one has 
  \begin{equation*}
    F^{-1}(u)=(1-u)^{-1/\alpha}.
  \end{equation*}
  Since 
  \begin{equation*}
    \int_{v=1-1/(2n)}^1 \frac{(1-v)\dd v}{f(F^{-1}(v))}=\grandO(n^{-1+1/\alpha})
  \end{equation*}
  and for $c>0$, $F^{-1}(1-c/n)=\grandO(n^{1/\alpha})$, we conclude that $\Ws_1(m,\mu^n)=\grandO(n^{-1+1/\alpha})$.
  \item For $F(x)=\ind{x>0}(1-\exp(-x^\alpha))$ with $\alpha>0$, one has 
  \begin{equation*}
    F^{-1}(u)=(-\log(1-u))^{1/\alpha},
  \end{equation*}
  and we conclude that $\Ws_1(m,\mu^n)=\grandO((\log n)^{1/\alpha}/n)$.
\end{itemize}


\section{Rate of convergence of the SPD to scalar conservation laws}\label{s:scl}

Under the assumptions of Theorem~\ref{theo:cvSPD}, the purpose of this section is to estimate the $\Ls^1$ distance between the entropy solution $u$ of the scalar conservation law~\eqref{eq:scl} with initial condition $u_0=H*m$ for some $m \in \Ps(\R)$, and the empirical CDF $u_n[\rx(n)]$ associated with the SPD started at some configuration $\rx(n) = (x_1(n), \ldots, x_n(n)) \in D_n$, over finite time horizons. 

\begin{theo}\label{theo:rateSPD}
  Let $\Lambda$ satisfy Assumption~\eqref{ass:LC}. Then for all $n \geq 1$, for all $\rx(n) \in D_n$,
  \begin{equation*}
    \forall t \geq 0, \qquad \|u(t,\cdot) - u_n[\rx(n)](t,\cdot)\|_{\Ls^1(\R)} \leq \Ws_1(m,\mu_0[\rx(n)]) + \frac{t \ConstLip}{n}.
  \end{equation*}
  
  When $m([a,b])=1$ with $-\infty<a<b<+\infty$ and $\rx(n) = (x_1(n), \ldots, x_n(n))$ is given by 
  \begin{equation}\label{eq:discr:opt}
    x_k(n)=u_0^{-1}\left(\frac{2k-1}{2n}\right), \qquad k\in\{1,\ldots,n\},
  \end{equation}
  then for all $n \geq 1$,
  \begin{equation*}
    \forall t \geq 0, \qquad \|u(t,\cdot) - u_n[\rx(n)](t,\cdot)\|_{\Ls^1(\R)} \leq \frac{b-a+2t \ConstLip}{2n}.
  \end{equation*}
\end{theo}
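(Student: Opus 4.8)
The plan is to exploit the error decomposition already recorded in~\eqref{eq:decompscl}, which, via the $\Ls^1$ contraction property~\eqref{it:kruz:3} of Theorem~\ref{theo:kruz}, bounds the total error by the sum of a \emph{discretisation error} $\|u_0 - u_{\infty,0}[\rx(n)]\|_{\Ls^1(\R)}$ and a \emph{non-entropicity error} $\|u_\infty[\rx(n)](t,\cdot) - u_n[\rx(n)](t,\cdot)\|_{\Ls^1(\R)}$. The first term is immediate: the identity~\eqref{eq:W1L1} gives $\|u_0 - u_{\infty,0}[\rx(n)]\|_{\Ls^1(\R)} = \Ws_1(m,\mu_0[\rx(n)])$. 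Hence the whole first assertion reduces to the non-entropicity bound $\|u_\infty[\rx(n)](t,\cdot) - u_n[\rx(n)](t,\cdot)\|_{\Ls^1(\R)} \leq t\ConstLip/n$, \ie to Proposition~\ref{prop:rateSPD}.

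To prove that bound I would compare $u_n[\rx]$ with a refined particle system converging to $u_\infty[\rx]$. Fix $m \geq 1$, set $N = mn$, and form $\z \in D_N$ by placing $m$ coincident particles at each $x_k$, so that $\mu_0[\z] = \mu_0[\rx]$. Assign to the $N$ particles two velocity vectors in $\R^N$: the block-constant vector $\nu$ with $\nu_i = \rblambda_{\lceil i/m\rceil}$, and the refined vector $\xi$ with $\xi_i = N\int_{(i-1)/N}^{i/N}\lambda(w)\dd w$. Under $\nu$ the $m$ copies of each $x_k$ stay glued and move at the common velocity $\rblambda_k$, and at each collision the conservation of mass and momentum reproduces exactly the $n$-particle merging rule, so $\phi[\nu](\z;\cdot)$ has the same empirical CDF as the original flow, namely $u_n[\rx]$. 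Under $\xi$ the system is the genuine $N$-particle SPD approximating the fixed measure $\mu_0[\rx]$, whose empirical CDF $\tu_N$ converges, as $m \to +\infty$, $\dd x$-almost everywhere to $u_\infty[\rx]$ by Theorem~\ref{theo:cvSPD}.

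Since both flows start from $\z$, the stability estimate~\eqref{eq:stabSPD} taken at $s = 0$ (where the configuration term vanishes), together with the fact that by~\eqref{eq:W1L1} the normalised $\Ls^1$ distance between two sorted configurations coincides with the $\Ls^1$ distance between their empirical CDFs, gives
\begin{equation*}
  \|u_n[\rx](t,\cdot) - \tu_N(t,\cdot)\|_{\Ls^1(\R)} \leq \frac{t}{N}\sum_{i=1}^N |\nu_i - \xi_i|.
\end{equation*}
For each $i$ the speeds $\nu_i$ and $\xi_i$ are averages of $\lambda$ over two subintervals of the common block $[(k-1)/n,k/n)$ of length $1/n$, so Assumption~\eqref{ass:LC} forces $|\nu_i - \xi_i| \leq \ConstLip/n$, whence $\frac{1}{N}\sum_{i=1}^N |\nu_i - \xi_i| \leq \ConstLip/n$ uniformly in $m$. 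Letting $m \to +\infty$ and applying Fatou's lemma together with the almost everywhere convergence of $\tu_N$ to $u_\infty[\rx]$ supplied by Theorem~\ref{theo:cvSPD} yields $\|u_\infty[\rx](t,\cdot) - u_n[\rx](t,\cdot)\|_{\Ls^1(\R)} \leq t\ConstLip/n$, which combined with the discretisation term proves the first assertion.

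The second assertion only adds the specific quantiles~\eqref{eq:optx}: when $m([a,b]) = 1$ the choice $x_k(n) = u_0^{-1}((2k-1)/(2n))$ is exactly the optimal discretisation analysed in Section~\ref{s:ci}, so Lemma~\ref{lem:discr}\eqref{it:discr:3} bounds the discretisation error by $(b-a)/(2n)$; adding the non-entropicity bound $t\ConstLip/n$ gives precisely $(b-a+2t\ConstLip)/(2n)$. I expect the main obstacle to be the non-entropicity estimate, and within it the two delicate points: realising $u_n[\rx]$ as a coincident-particle instance of an $N$-particle SPD so that~\eqref{eq:stabSPD} applies with a vanishing time-$0$ term, and controlling the averaged velocity discrepancy by $\ConstLip/n$ uniformly in the refinement level $m$ before passing to the limit.
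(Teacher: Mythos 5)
Your proposal is correct, and its backbone is the same as the paper's: the decomposition~\eqref{eq:decompscl} with the stability property~(\ref{it:kruz:3}) of Theorem~\ref{theo:kruz} and the identity~\eqref{eq:W1L1} for the discretisation term, Lemma~\ref{lem:discr}~(\ref{it:discr:3}) for the optimal quantile choice, and, for the non-entropicity term, the key trick of realising $u_n[\rx]$ as the empirical CDF of a \emph{finer} SPD with block-constant velocities, so that the velocity-stability estimate~\eqref{eq:stabSPD} applies with a vanishing initial term. Where you genuinely differ is in how the limit towards $u_{\infty}[\rx]$ is organised. The paper's proof of Proposition~\ref{prop:rateSPD} only compares $u_n[\rx]$ with $u_{2n}[\hat{\rx}]$ (your construction with $m=2$), obtaining the one-step bound $t\ConstLip/(2n)$, then iterates the duplication $M$ times, telescopes, sums the geometric series $\sum_{m=1}^{M} t\ConstLip/(2^m n) \leq t\ConstLip/n$, and finally kills the residual term $\|u_{2^Mn}[\hat{\rx}^M](t,\cdot)-u_{\infty}[\rx](t,\cdot)\|_{\Ls^1(\R)}$ by combining Theorem~\ref{theo:cvSPD} with the finite speed of propagation (to get a compact support uniform in $M$) and Lebesgue's theorem. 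You instead compare $u_n[\rx]$ in one shot with the genuine $mn$-particle SPD, uniformly in $m$, and pass to the limit by Fatou's lemma, which only requires the $\dd x$-almost everywhere convergence supplied by Theorem~\ref{theo:cvSPD}. Your route is leaner on two counts: it dispenses with the dyadic cascade, and, more substantially, with the compact-support/dominated-convergence step, since lower semicontinuity of the $\Ls^1$ norm under almost everywhere convergence suffices. Your velocity estimate is also sound: both $\nu_i$ and $\xi_i$ are averages of $\lambda$ over subintervals of the same block of length $1/n$, so Assumption~\eqref{ass:LC} gives $|\nu_i-\xi_i| \leq \ConstLip/n$, yielding the same final constant as the paper's geometric series. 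The one point you should state as a verification rather than an assertion (the paper does the same implicitly for $m=2$) is that $m$ coincident particles with equal velocities and masses $1/(mn)$ evolve under the SPD exactly as a single particle of mass $1/n$, so that $\phi[\nu](\z;\cdot)$ indeed has empirical CDF $u_n[\rx]$; this follows from conservation of mass and momentum at collisions, which makes the merging rule of the grouped copies coincide with the $n$-particle rule.
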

\begin{proof}
Following the approach described in the introduction of the article, we first write, for all $t \geq 0$,
\begin{equation}\label{eq:decomp:scal}
  \begin{aligned}
    \|u(t,\cdot) - u_n[\rx(n)](t,\cdot)\|_{\Ls^1(\R)} & \leq \|u(t,\cdot) - u_{\infty}[\rx(n)](t,\cdot)\|_{\Ls^1(\R)}\\
    & \quad + \|u_{\infty}[\rx(n)](t,\cdot) - u_n[\rx(n)](t,\cdot)\|_{\Ls^1(\R)},
  \end{aligned}
\end{equation}
where $u_{\infty}[\rx(n)]$ refers to the entropy solution of the scalar conservation law~\eqref{eq:scl} with discretised initial condition $u_{\infty,0}[\rx(n)] = H*\mu_0[\rx(n)]$. The $\Ls^1$ stability property of Theorem~\ref{theo:kruz} for the scalar conservation law~\eqref{eq:scl} yields
\begin{equation*}
  \|u(t,\cdot) - u_{\infty}[\rx(n)](t,\cdot)\|_{\Ls^1(\R)} \leq \|u_0 - u_{\infty,0}[\rx(n)]\|_{\Ls^1(\R)} = \Ws_1(m,\mu_0[\rx(n)]),
\end{equation*}
which, according to~\eqref{it:discr:3} in Lemma~\ref{lem:discr}, is smaller than $(b-a)/(2n)$ if $m([a,b])=1$ and $\rx(n)$ is given by~\eqref{eq:discr:opt}. The following proposition allows to control the second term in the right-hand side of~\eqref{eq:decomp:scal}.
\end{proof}

\begin{prop}\label{prop:rateSPD}
  Under Assumption~\eqref{ass:LC}, let $n \geq 1$ and let $\rx \in D_n$. For all $t \geq 0$,
  \begin{equation*}
    \|u_{\infty}[\rx](t,\cdot) - u_n[\rx](t,\cdot)\|_{\Ls^1(\R)} \leq \frac{t \ConstLip}{n}.
  \end{equation*}
\end{prop}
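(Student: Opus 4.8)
The goal is to compare the empirical CDF $u_n[\rx]$, which is a weak solution to~\eqref{eq:scl} with discrete initial data, against the true entropy solution $u_{\infty}[\rx]$ with the \emph{same} initial data $u_{\infty,0}[\rx]$. Both objects start from identical initial data, so the gap between them is purely the non-entropicity of the particle dynamics. The natural strategy is to construct, on the interval $[0,t]$, an auxiliary entropy solution that the particle system tracks exactly up to a controllable velocity mismatch, and then invoke the velocity-dependent stability estimate. Concretely, the SPD evolves each particle with velocity $\rblambda_k = n\int_{(k-1)/n}^{k/n}\lambda(w)\,\dd w$, the \emph{average} of $\lambda$ over the mass slab $[(k-1)/n, k/n]$, whereas the entropy solution transports the same slab at the pointwise velocity $\lambda(w)$. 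The discrepancy between these is what Assumption~\eqref{ass:LC} controls: over a slab of width $1/n$, the Lipschitz bound gives $|\lambda(w) - \rblambda_k| \leq \ConstLip/n$.

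\medskip

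\textbf{Key steps.} First I would reformulate everything in terms of pseudo-inverses (the quantile picture), since by~\eqref{eq:W1L1} the $\Ls^1$ distance between two CDFs equals the $\Ls^1$ distance between their pseudo-inverses on $[0,1]$. Before collisions, the trajectory of particle $k$ in the SPD is simply $x_k + t\rblambda_k$, and the quantile $F^{-1}(\cdot,t)$ of the entropy solution satisfies the characteristic relation whereby the mass level $w$ is transported at velocity $\lambda(w)$ — this is the content of the underlying representation of the entropy solution (cf.~the Brenier--Grenier correspondence exploited in Theorem~\ref{theo:cvSPD}). Second, I would compare the particle system $u_n[\rx]$ with the entropy solution $\tilde u$ of a scalar conservation law whose \emph{flux} is piecewise-affine, chosen so that its derivative $\tilde\lambda$ is exactly the piecewise-constant step function taking value $\rblambda_k$ on $((k-1)/n, k/n)$; for this step flux the particle velocities match the entropy characteristics, so $u_n[\rx]$ and $\tilde u$ coincide up to the non-entropic collision effects, which by the convexification/sticky-collision mechanism only \emph{improve} the $\Ls^1$ agreement. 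Third, I would apply Proposition~\ref{prop:genstaconscal} to compare $\tilde u$ (flux derivative $\tilde\lambda$) with $u_\infty[\rx]$ (flux derivative $\lambda$), both started from $u_{\infty,0}[\rx]$; since the initial data agree, the $s=0$ term vanishes and the bound reads
\begin{equation*}
  \|u_\infty[\rx](t,\cdot) - \tilde u(t,\cdot)\|_{\Ls^1(\R)} \leq t\int_{w=0}^1 |\lambda(w) - \tilde\lambda(w)|\,\dd w.
\end{equation*}
Finally I would estimate the integral: on each slab $\tilde\lambda = \rblambda_k$ is the mean of $\lambda$, so $\int_{(k-1)/n}^{k/n}|\lambda(w)-\rblambda_k|\,\dd w \leq \ConstLip/n \cdot (1/n)$ by the Lipschitz bound, and summing over the $n$ slabs gives $\int_0^1 |\lambda - \tilde\lambda|\,\dd w \leq \ConstLip/n$, yielding the claimed $t\ConstLip/n$.

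\medskip

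\textbf{Main obstacle.} The delicate point is justifying that $u_n[\rx]$ coincides with — or is no further from $u_\infty[\rx]$ than — the entropy solution $\tilde u$ of the step-flux conservation law. The subtlety is that the particle dynamics with the step flux is \emph{not} automatically the entropy solution: at collisions the clusters merge and average velocities, which is precisely where non-entropicity could enter. What saves the argument is that for a piecewise-constant flux derivative, the sticky collisions realise exactly the Riemann/shock resolution that the entropy condition demands (sticky collisions dissipate energy and produce admissible shocks), so $u_n[\rx]$ is in fact the entropy solution $\tilde u$ of the step-flux law started from $u_{\infty,0}[\rx]$ — a fact that should be extractable from the weak-solution identification in~\cite[Proposition~4.2.1]{jr} together with the observation that for a step flux the sticky particle weak solution satisfies Kru\v{z}kov's inequality. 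Verifying this identification carefully, rather than merely the velocity-mismatch bookkeeping, is the heart of the proof; once it is in hand, the rest is the routine stability-plus-Lipschitz estimate sketched above.
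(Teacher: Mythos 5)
Your route is genuinely different from the paper's, and in principle it can be made to work (it would even yield the sharper constant $t\ConstLip/(4n)$, since $\rblambda_k$ is the slab average of $\lambda$). But as written it has two genuine gaps. The first is the invocation of Proposition~\ref{prop:genstaconscal}: that proposition is stated for fluxes satisfying Assumption~\eqref{ass:C}, i.e.\ $\Cs^1$ on $[0,1]$, whereas your step flux $\tilde\Lambda$ is only piecewise affine, with $\tilde\lambda$ discontinuous at the grid points. This is not a cosmetic restriction: the paper's proof of Proposition~\ref{prop:genstaconscal} (Appendix~A) passes to the limit using Theorem~\ref{theo:cvSPD}, which is also stated under~\eqref{ass:C}, so extending the stability-in-flux estimate to a flux with piecewise-constant derivative is essentially equivalent to proving a Brenier--Grenier-type convergence theorem for such fluxes --- which is precisely the identification you postponed. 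One can instead appeal to the classical continuous-dependence estimate for Kru\v{z}kov solutions with merely Lipschitz fluxes, $\|u(t,\cdot)-v(t,\cdot)\|_{\Ls^1(\R)} \leq \|u_0-v_0\|_{\Ls^1(\R)} + t\,\mathrm{TV}(u_0)\,\sup_w|\lambda(w)-\mu(w)|$ (Lucier, Bouchut--Perthame), but that is an ingredient external to the paper and must be cited and verified, not silently absorbed into Proposition~\ref{prop:genstaconscal}.

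The second gap is the identification $u_n[\rx]=\tilde u$ itself, which you correctly call the heart of the proof but only assert (``should be extractable''). It is true, but \cite[Proposition~4.2.1]{jr} gives only the weak-solution property (Rankine--Hugoniot at each jump); what actually needs proving is Oleinik admissibility of every cluster with respect to $\tilde\Lambda$: a cluster occupying ranks $\{a+1,\ldots,b\}$ is an upward jump of the CDF from $a/n$ to $b/n$, admissible if and only if the graph of $\tilde\Lambda$ lies above its chord on $[a/n,b/n]$. This condition is equivalent to the SPD cluster-stability property (the mean velocity of every left sub-block dominates that of the complementary right sub-block), and one must then check that it is preserved under merging (when a cluster with speed $v_A$ overtakes one with speed $v_B<v_A$, the merged chord has slope in $(v_B,v_A)$ and lies below both sub-chords, so admissibility propagates) and that initially coincident particles are released exactly according to the lower convex envelope resolving the corresponding Riemann problem. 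None of this is routine bookkeeping, and none of it appears in your write-up. By contrast, the paper's proof sidesteps both issues and never leaves the $\Cs^1$ framework: it interprets $u_n[\rx]$ as a $2n$-particle SPD with duplicated velocities, bounds $\|u_n[\rx](t,\cdot)-u_{2n}[\hat\rx](t,\cdot)\|_{\Ls^1(\R)}$ by $t\ConstLip/(2n)$ via the velocity-perturbation estimate~\eqref{eq:stabSPD}, telescopes over dyadic refinements to get $\sum_{m\geq 1} t\ConstLip 2^{-m}/n = t\ConstLip/n$, and identifies the limit with $u_{\infty}[\rx]$ using Theorem~\ref{theo:cvSPD}, finite speed of propagation and dominated convergence.
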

\begin{proof}
  Instead of comparing $u_n[\rx]$ directly with the entropy solution $u_{\infty}[\rx]$, we first compare it with the empirical CDF of the SPD with $2n$ particles, started in the configuration where the positions in $\rx$ are duplicated, and then iterate this comparison. More precisely, for all $n \geq 1$, let us define the operator 
  \begin{equation*}
    \left\{\begin{array}{ccc}
      D_n & \to & D_{2n}\\
      \rx & \mapsto & \hat{\rx}
    \end{array}\right.
  \end{equation*}
  by $\hat{x}_{2k-1} = \hat{x}_{2k} = x_k$ for all $k \in \{1, \ldots, n\}$, and denote by $\hat{\rx}^m \in D_{2^m n}$ the $m$-th iteration of this operator. We first prove that, for all $n \geq 1$, for all $\rx \in D_n$,
  \begin{equation}\label{eq:controln2n:scal}
    \forall t \geq 0, \qquad \|u_n[\rx](t, \cdot) - u_{2n}[\hat{\rx}](t, \cdot)\|_{\Ls^1(\R)} \leq \frac{t \ConstLip}{2n}.
  \end{equation}
  In this purpose, we interpret the function $u_n[\rx]$ as the empirical CDF of the SPD with $2n$ particles, with initial configuration $\hat{\rx}$, and with modified velocity coefficients $\hat{\lambda}_1, \ldots, \hat{\lambda}_{2n}$ defined by
  \begin{equation*}
    \hat{\lambda}_{2k-1} = \hat{\lambda}_{2k} = n \int_{w=(k-1)/n}^{k/n} \lambda(w)\dd w,
  \end{equation*}
  for all $k \in \{1, \ldots, n\}$. Then~\eqref{eq:stabSPD} yields
  \begin{equation*}
    \begin{aligned}
      \|u_n[\rx](t,\cdot) - u_{2n}[\hat{\rx}](t,\cdot)\|_{\Ls^1} & \leq \frac{t}{n} \sum_{\hat{k}=1}^{2n} \left| \hat{\lambda}_{\hat{k}} - 2n \int_{w=(\hat{k}-1)/(2n)}^{\hat{k}/(2n)} \lambda(w) \dd w\right|\\
      & = t \sum_{k=1}^n \bigg\{\left| \int_{w=(k-1)/n}^{k/n} \lambda(w)\dd w - 2 \int_{w=(k-1)/n}^{(k-\frac{1}{2})/n} \lambda(w) \dd w\right|\\
      & \qquad + \left| \int_{w=(k-1)/n}^{k/n} \lambda(w)\dd w - 2 \int_{w=(k-\frac{1}{2})/n}^{k/n} \lambda(w) \dd w\right|\bigg\}
    \end{aligned}
  \end{equation*}
  and Assumption~\eqref{ass:LC} allows to bound the right-hand side above by $t \ConstLip/(2n)$, whence~\eqref{eq:controln2n:scal}.
  
  We now fix $n \geq 1$ and use~\eqref{eq:controln2n:scal} to write that, for all $M \geq 1$,
  \begin{equation*}
    \begin{aligned}
      \|u_n[\rx](t, \cdot) - u_{\infty}[\rx](t, \cdot)\|_{\Ls^1(\R)} & \leq \sum_{m=1}^M \|u_{2^{m-1}n}[\hat{\rx}^{m-1}](t, \cdot) - u_{2^mn}[\hat{\rx}^m](t, \cdot)\|_{\Ls^1(\R)}\\
      & \quad + \|u_{2^Mn}[\hat{\rx}^M](t, \cdot) - u_{\infty}[\rx](t, \cdot)\|_{\Ls^1(\R)}\\
      & \leq \sum_{m=1}^M \frac{t \ConstLip}{2^mn} + \|u_{2^Mn}[\hat{\rx}^M](t, \cdot) - u_{\infty}[\rx](t, \cdot)\|_{\Ls^1(\R)}.
    \end{aligned}
  \end{equation*}
  To complete the proof, we have to check that 
  \begin{equation}\label{eq:limM}
    \lim_{M \to +\infty} \|u_{2^Mn}[\hat{\rx}^M](t, \cdot) - u_{\infty}[\rx](t, \cdot)\|_{\Ls^1(\R)} = 0.
  \end{equation}
  To this aim, we note that, for all $M \geq 1$, the empirical measure $\mu_0[\hat{\rx}^M]$ associated with $\hat{\rx}^M$ does not depend on $M$ and coincides with $\mu_0[\rx]$. As a consequence, by Theorem~\ref{theo:cvSPD}, $u_{2^Mn}[\hat{\rx}^M](t, \cdot)$ converges to $u_{\infty}[\rx](t, \cdot)$, $\dd x$-almost everywhere. Besides, the initial measure $\mu_0[\rx]$ has compact support $\{x_1, \ldots, x_n\}$. Let $-\infty < a < x_1$ and $x_n \leq b < +\infty$, so that $u_{\infty,0}[\rx](a)=0$ and $u_{\infty,0}[\rx](b)=1$. Using the finite speed of propagation for both $u_{2^Mn}[\hat{\rx}^M]$ and $u_{\infty}[\rx]$, we deduce that 
  \begin{equation*}
    \forall x \not\in [a-t\ConstBoundS,b+t\ConstBoundS], \qquad u_{2^Mn}[\hat{\rx}^M](t,x) - u_{\infty}[\rx](t,x) = 0.
  \end{equation*}
  Therefore, $u_{2^Mn}[\hat{\rx}^M](t, \cdot)-u_{\infty}[\rx](t, \cdot)$ has a compact support which does not depend on $M$. As a consequence,~\eqref{eq:limM} follows from Lebesgue's theorem and the proof is completed.
\end{proof}

\begin{rk}\label{rk:rateSPD}
  If the flux function $\Lambda$ is concave, then $u_n[\rx]$ is actually the entropy solution to the scalar conservation law~\eqref{eq:scl} with discrete initial condition~\cite[Lemma~3.3]{boujam99}, so that 
  \begin{equation*}
    \|u_{\infty}[\rx](t,\cdot) - u_n[\rx](t,\cdot)\|_{\Ls^1(\R)}=0,
  \end{equation*}
  for all $n \geq 1$, $\rx \in D_n$ and $t \geq 0$. Note that this remark holds true even without Assumption~\eqref{ass:LC}.
  
  On the contrary, one can construct examples of convex flux functions for which the bound of Proposition~\ref{prop:rateSPD} provides the right order of magnitude for $\|u_{\infty}[\rx](t,\cdot) - u_n[\rx](t,\cdot)\|_{\Ls^1(\R)}$. Let us indeed consider the Burgers equation $\Lambda(u) = u^2/2$ with the initial condition $m=\delta_0$, the Dirac mass at $0$. Fix $n \geq 1$ and let $\rx = (0, \ldots, 0) \in D_n$. On the one hand, the entropy solution $u_{\infty}[\rx]$ writes
  \begin{equation*}
    u_{\infty}[\rx](t,x) = \begin{cases}
      0 & \text{if $x < 0$,}\\
      \frac{x}{t} & \text{if $0 \leq x < t$,}\\
      1 & \text{if $x \geq t$.}
    \end{cases}
  \end{equation*}
  On the other hand, the particles drift away from each other in the SPD, so that 
  \begin{equation*}
    \forall k \in \{1, \ldots, n\}, \qquad \phi_k[\rblambda](\rx;t) = t\frac{k-1/2}{n},
  \end{equation*}
  therefore
  \begin{equation*}
    u_n[\rx](t,x) = \begin{cases}
      0 & \text{if $x < \frac{t}{2n}$,}\\
      \frac{k}{n} & \text{if $t \frac{k-1/2}{n} \leq x < t \frac{k+1/2}{n}$ with $k \in \{1, \ldots, n-1\}$,}\\
      1 & \text{if $x \geq t \frac{n-1/2}{n}$.}
    \end{cases}
  \end{equation*}
  As a consequence,
  \begin{equation*}
    \|u_{\infty}[\rx](t,\cdot) - u_n[\rx](t,\cdot)\|_{\Ls^1(\R)} = \frac{t}{4n},
  \end{equation*}
  which is of the same order as the bound given in Proposition~\ref{prop:rateSPD}.
\end{rk}


\section{Rate of convergence of the MSPD to diagonal hyperbolic systems}\label{s:syst}


\subsection{Approximating the MPSD through the iterated TSPD}\label{ss:approxMSPD} Given $\Delta > 0$, we define the operators $\Phi_{\Delta}$ and $\tPhi_{\Delta}$ on $D_n^d$ by
\begin{equation*}
  \Phi_{\Delta}(\x) := \Phi(\x;\Delta), \qquad \tPhi_{\Delta}(\x) := \tPhi[\tblambda(\x)](\x;\Delta),
\end{equation*}
where we recall that $\Phi(\x;t)$ refers to the MSPD while $\tPhi[\tblambda(\x)](\x;t)$ is the TSPD with initial velocity vector $\tblambda(\x)$ given by~\eqref{eq:vitesses}. 

By the flow property of the MSPD, the $L$-th iteration $\Phi^L_{\Delta}(\x)$ is nothing but $\Phi(\x;L\Delta)$. On the other hand, the $L$-th iteration $\tPhi_{\Delta}(\x)$ is easier to compute as one does not have to take interactions between particles of different types into account. The purpose of this subsection is to estimate the error of the approximation of the MSPD by this iterated TSPD scheme. To this aim, given $\x \in D_n^d$, we first extend the iterated TSPD into a continuous process $(\tPhi_\Delta(\x;t))_{t \geq 0}$ in $D_n^d$ by interpolating between the points of the uniform grid with step $\Delta$ thanks to the TSPD. More precisely, for all $L \geq 1$, for all $t \in [(L-1)\Delta,L\Delta]$, we define
\begin{equation*}
  \tPhi_{\Delta}(\x;t) := \tPhi_{t-(L-1)\Delta}(\tPhi^{L-1}_{\Delta}(\x)) = \tPhi[\tblambda(\tPhi^{L-1}_{\Delta}(\x))](\tPhi^{L-1}_{\Delta}(\x);t-(L-1)\Delta).
\end{equation*}
Our purpose is to prove the following result.

\begin{prop}\label{prop:schema}
  Under Assumptions~\eqref{ass:LC:2} and~\eqref{ass:USH}, there exists $C>0$ that depends neither on $n \geq 1$ nor on $\Delta$ such that, for all $\x \in D_n^d$, 
  \begin{equation}\label{errtspd}
    \sup_{t \geq 0} \|\Phi(\x;t) - \tPhi_{\Delta}(\x;t)\|_1 \leq C \Delta.
  \end{equation}
\end{prop}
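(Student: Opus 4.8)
The plan is to control the global error by a telescoping decomposition over the grid intervals in which the stability constant $\ConstStab_1$ of Proposition~\ref{prop:stabMSPD} enters only \emph{linearly}, and then to bound the resulting per-interval consistency errors by the total number of cross-type collisions, which is summable precisely because of Assumption~\eqref{ass:USH}. Fix $t>0$ and let $L=\lceil t/\Delta\rceil$. For $j=0,\ldots,L$ I would introduce the hybrid process $\Psi_j$ that follows the iterated TSPD on $[0,j\Delta]$ and the genuine MSPD on $[j\Delta,t]$, so that $\Psi_0(t)=\Phi(\x;t)$ and $\Psi_L(t)=\tPhi_\Delta(\x;t)$. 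Two consecutive hybrids $\Psi_{j-1}$ and $\Psi_j$ coincide up to time $(j-1)\Delta$, differ on $[(j-1)\Delta,j\Delta]$ (MSPD versus frozen TSPD, both started from the common point $\tilde\y_j:=\tPhi_\Delta(\x;(j-1)\Delta)$), and are both governed by the MSPD afterwards. Applying Proposition~\ref{prop:stabMSPD} on $[j\Delta,t]$ with $s=j\Delta$ gives $\|\Psi_{j-1}(t)-\Psi_j(t)\|_1\leq \ConstStab_1\,\epsilon_j$, where $\epsilon_j:=\|\Phi(\tilde\y_j;\Delta)-\tPhi[\tblambda(\tilde\y_j)](\tilde\y_j;\Delta)\|_1$ is the local consistency error. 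A triangle inequality then yields $\|\Phi(\x;t)-\tPhi_\Delta(\x;t)\|_1\leq \ConstStab_1\sum_{j=1}^L\epsilon_j$, and it remains to prove $\sum_{j=1}^L\epsilon_j\leq C'\Delta$ with $C'$ independent of $n$, $\Delta$ and $t$.

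To bound $\epsilon_j$ I would compare, on $[(j-1)\Delta,j\Delta]$, the MSPD and the frozen TSPD issued from the same point $\tilde\y_j$: both start with the velocity array $\tblambda(\tilde\y_j)$, and between consecutive cross-type collisions $0=s_0<\cdots<s_p\leq\Delta$ of the MSPD, the MSPD is, typewise, an SPD with a fixed velocity array $\blambda^{(i)}$, whereas the frozen TSPD uses $\blambda^{(0)}=\tblambda(\tilde\y_j)$ throughout. Applying~\eqref{eq:stabSPD} type by type on each subinterval $(s_i,s_{i+1})$ and summing over types and over $i$ (the configuration distance starting at $0$) gives
\begin{equation*}
  \epsilon_j \leq \sum_{i=0}^{p-1}\frac{s_{i+1}-s_i}{n}\sum_{\gamma:k\in\Part}\bigl|\blambda^{(i)}_{\gamma:k}-\blambda^{(0)}_{\gamma:k}\bigr|\leq \frac{\Delta}{n}\sum_{\gamma:k\in\Part}V_{j,\gamma:k},
\end{equation*}
where $V_{j,\gamma:k}$ is the total variation of the velocity of $\gamma:k$ along the MSPD on this interval. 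Since each cross-type collision changes a rank $\omega^{\gamma'}_{\gamma:k}$ by a multiple of $1/n$, Assumption~\eqref{ass:LC:2} bounds $V_{j,\gamma:k}$ by $\ConstLip$ times the total variation $R_{j,\gamma:k}$ of the scaled cross-type ranks of $\gamma:k$ on the interval, whence $\epsilon_j\leq \frac{\Delta\ConstLip}{n}\sum_{\gamma:k}R_{j,\gamma:k}$.

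Summing, $\sum_j\epsilon_j\leq \frac{\Delta\ConstLip}{n}\sum_{\gamma:k}\sum_j R_{j,\gamma:k}$, so the proof reduces to showing that the total number of cross-type collisions accumulated over \emph{all} the local MSPD runs is $\grandO(n)$, uniformly in $\Delta$. This is the step I expect to be the main obstacle, and it is exactly where Assumption~\eqref{ass:USH} is decisive: because every type-$\gamma$ velocity exceeds every type-$\gamma'$ velocity by at least $\ConstUSH$ for $\gamma<\gamma'$, the relative order of any fixed pair of particles of different types is monotone along the iterated TSPD, so such a pair can lie within crossing range (relative displacement at most $2\ConstBoundS\Delta$) during only $\grandO(\ConstBoundS/\ConstUSH)$ consecutive grid intervals, and once separated it stays separated. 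Making this precise should yield $\sum_j R_{j,\gamma:k}\leq C''$ for a constant $C''$ depending only on $d$ and $\ConstBoundS/\ConstUSH$; this collision-counting heart is of the same nature as the estimates underpinning Proposition~\ref{prop:stabMSPD} in~\cite{jr}, from which I would borrow the relevant bounds.

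With such a bound in hand, $\sum_j\epsilon_j\leq \frac{\Delta\ConstLip}{n}\cdot dn\,C''=\ConstLip d\,C''\,\Delta$, and since the whole argument was carried out for arbitrary $t$, taking the supremum over $t\geq 0$ gives~\eqref{errtspd} with $C=\ConstStab_1\ConstLip d\,C''$, a constant depending neither on $n$ nor on $\Delta$, as required.
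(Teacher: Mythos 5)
Your proposal is correct and follows essentially the same route as the paper: the same telescoping/hybrid decomposition in which the stability constant $\ConstStab_1$ of Proposition~\ref{prop:stabMSPD} enters only linearly (the paper's~\eqref{eq:estimstab}), the same one-step consistency bound obtained by applying~\eqref{eq:stabSPD} typewise between cross-type collisions and controlling the total variation of the velocities (the paper's Lemma~\ref{lem:onestep}), and the same \eqref{ass:USH}-based collision count — monotone decrease of each pairwise gap by $\ConstUSH\Delta$ per step, crossing range $2\ConstBoundS\Delta$, hence at most $\lceil 2\ConstBoundS/\ConstUSH\rceil$ active windows per pair (the paper's Remark~\ref{rk:csqUSH} and~\eqref{nbintercouplfix}) — leading to the same constant~\eqref{eq:C}. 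The counting step you flag as the main obstacle is exactly the argument you sketch and is proved directly in the paper in a few lines, with no need to import estimates from~\cite{jr}.
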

The value of $C$ is given in~\eqref{eq:C} below. Of course, for $t$ restricted to the uniform grid $(L\Delta)_{L\ge 0}$, the estimate~\eqref{errtspd} implies
\begin{equation*}\label{errLspd}
  \sup_{L \geq 0} \|\Phi^L_{\Delta}(\x) - \tPhi^L_{\Delta}(\x)\|_1 \leq C \Delta.
\end{equation*}
We begin by proving this restricted estimation before deducing \eqref{errtspd}. We first use the flow property of both the TSPD and the MSPD~\cite[Proposition~3.2.8]{jr} to write
\begin{equation}\label{eq:estimstab}
  \begin{aligned}
    \|\Phi^L_{\Delta}(\x) - \tPhi^L_{\Delta}(\x)\|_1 & \leq \sum_{\ell=1}^L \|\Phi^{L-\ell}_{\Delta}(\Phi_{\Delta}-\tPhi_{\Delta})\tPhi^{\ell-1}_{\Delta}(\x)\|_1\\
    & \leq \ConstStab_1\sum_{\ell=1}^L \|(\Phi_{\Delta}-\tPhi_{\Delta})\tPhi^{\ell-1}_{\Delta}(\x)\|_1,
  \end{aligned}
\end{equation}
where the last inequality follows from the discrete stability estimate of Proposition~\ref{prop:stabMSPD}.

The next lemma allows to estimate each term of the sum in the right-hand side. We recall from~\cite{jr} that, for all $\y \in D_n^d$, the set $\Rb(\y)$ contains the pairs of particles $(\alpha:i,\beta:j) \in (\Part)^2$ such that $\alpha < \beta$ and $y_i^{\alpha} < y_j^{\beta}$, so that in the MSPD started at $\y$, these particles undergo a collision at a finite and positive time $\tinter_{\alpha:i, \beta:j}(\y)$. We also define the first collision time in the MSPD started at $\y$ by
\begin{equation*}
  t^*(\y) := \min\{\tinter_{\alpha:i, \beta:j}(\y), (\alpha:i,\beta:j) \in \Rb(\y)\} \in (0,+\infty],
\end{equation*}
where we take the convention that $t^*(\y) = +\infty$ if $\Rb(\y)$ is empty.

\begin{lem}\label{lem:onestep}
  Under the assumptions of Proposition~\ref{prop:schema}, for all $\y \in D_n^d$ and all $\Delta\ge 0$, \begin{equation*}
    \|\Phi_{\Delta}(\y) - \tPhi_{\Delta}(\y)\|_1 \leq 2\Delta\frac{\ConstLip}{n^2} \sum_{(\alpha:i, \beta:j) \in \Rb(\y)} \ind{\tinter_{\alpha:i, \beta:j}(\y) \leq \Delta}.
  \end{equation*}
\end{lem}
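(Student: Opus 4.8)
The plan is to decompose the MSPD trajectory over $[0,\Delta]$ into maximal segments on which no collision between particles of different types occurs, and to compare it with the TSPD segment by segment. Let $0 = t_0 < t_1 < \cdots < t_p \le \Delta$ denote the successive instants in $[0,\Delta]$ at which a collision between particles of distinct types occurs in the MSPD started at $\y$. By the construction of the MSPD recalled in~\cite{jr}, the array of velocities is reset to $\tblambda(\Phi(\y;t_\ell))$ at each such instant and remains constant in between, so that on each interval $[t_{\ell-1},t_\ell]$ the MSPD coincides with the TSPD driven by the velocities $\tblambda(\y^{(\ell-1)})$, where I write $\y^{(\ell)} := \Phi(\y;t_\ell)$. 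If $p=0$ there is no intertype collision in $[0,\Delta]$, both dynamics agree on $[0,\Delta]$, and the two sides of the inequality vanish; I therefore assume $p \ge 1$.

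For $\ell \in \{0,\ldots,p\}$, I would introduce the hybrid process $\Psi^{(\ell)}$ that follows the MSPD on $[0,t_\ell]$ and then evolves, on $[t_\ell,\Delta]$, according to the TSPD with the frozen velocities $\tblambda(\y^{(\ell)})$, namely $\tPhi[\tblambda(\y^{(\ell)})](\y^{(\ell)};\cdot-t_\ell)$. Since the last intertype collision in $[0,\Delta]$ occurs at $t_p$, the MSPD velocities are already constant on $[t_p,\Delta]$ and $\Psi^{(p)}(\Delta) = \Phi_{\Delta}(\y)$; at the other end, $\Psi^{(0)}$ is exactly the TSPD with the initial velocities $\tblambda(\y)$, so $\Psi^{(0)}(\Delta) = \tPhi_{\Delta}(\y)$. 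A telescoping sum then gives
\begin{equation*}
  \Phi_{\Delta}(\y) - \tPhi_{\Delta}(\y) = \sum_{\ell=1}^p \left(\Psi^{(\ell)}(\Delta) - \Psi^{(\ell-1)}(\Delta)\right).
\end{equation*}

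The key observation is that $\Psi^{(\ell)}$ and $\Psi^{(\ell-1)}$ coincide on $[0,t_\ell]$: they agree on $[0,t_{\ell-1}]$ by construction, and on $[t_{\ell-1},t_\ell]$ both run the TSPD driven by $\tblambda(\y^{(\ell-1)})$. Hence at time $t_\ell$ they share the configuration $\y^{(\ell)}$, and on $[t_\ell,\Delta]$ they are two TSPDs issued from $\y^{(\ell)}$, driven respectively by $\tblambda(\y^{(\ell)})$ and $\tblambda(\y^{(\ell-1)})$. As the TSPD decouples into $d$ independent SPDs, I would apply the scalar stability estimate of Proposition~\ref{prop:stabSPD} to each type, with the two trajectories coinciding at time $t_\ell$, and sum over $\gamma$ to obtain
\begin{equation*}
  \|\Psi^{(\ell)}(\Delta) - \Psi^{(\ell-1)}(\Delta)\|_1 \le \frac{\Delta - t_\ell}{n}\sum_{\gamma:k \in \Part} \left|\tlambda_k^{\gamma}(\y^{(\ell)}) - \tlambda_k^{\gamma}(\y^{(\ell-1)})\right|.
\end{equation*}

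It remains to estimate these velocity differences. The intertype ranks $\omega_{\gamma:k}^{\gamma'}$ entering~\eqref{eq:vitesses} change only when $\gamma:k$ crosses a particle of another type, hence only at the instant $t_\ell$ when passing from $\y^{(\ell-1)}$ to $\y^{(\ell)}$, and the rank of $\gamma:k$ within type $\gamma'$ then varies by $1/n$ times the number of type-$\gamma'$ particles it crosses at $t_\ell$. Assumption~\eqref{ass:LC:2} thus bounds $|\tlambda_k^{\gamma}(\y^{(\ell)}) - \tlambda_k^{\gamma}(\y^{(\ell-1)})|$ by $\ConstLip/n$ times the total number of particles that $\gamma:k$ crosses at $t_\ell$; summing over $\gamma:k$ counts each crossing pair exactly twice, so the sum above is at most $2\ConstLip q_\ell / n$, where $q_\ell$ is the number of pairs crossing at $t_\ell$. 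Using $\Delta - t_\ell \le \Delta$ and summing over $\ell$ yields the bound $\frac{2\Delta\ConstLip}{n^2}\sum_{\ell=1}^p q_\ell$. Finally, Assumption~\eqref{ass:USH} ensures that clusters of different types drift apart after colliding, so each pair $(\alpha:i,\beta:j) \in \Rb(\y)$ crosses exactly once, at time $\tinter_{\alpha:i,\beta:j}(\y)$; hence $\sum_{\ell=1}^p q_\ell$ equals the number of pairs with $\tinter_{\alpha:i,\beta:j}(\y) \le \Delta$, which is the announced estimate. The main obstacle is the first step: justifying rigorously from the construction in~\cite{jr} that the MSPD is a concatenation of TSPD segments with rank-based velocity resets, and organising the combinatorics when several pairs cross simultaneously at one instant $t_\ell$ — it is precisely the double counting of crossing pairs that produces the factor $2$.
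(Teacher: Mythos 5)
Your proof is correct, and it rests on the same three pillars as the paper's: splitting $[0,\Delta]$ at the intertype collision times of the MSPD, the typewise use of the SPD stability estimate~\eqref{eq:stabSPD}, and the combinatorial control of the velocity updates (Assumption~\eqref{ass:LC:2} gives $\ConstLip/n$ per crossed particle, double counting of pairs yields the factor $2$, and Assumption~\eqref{ass:USH} ensures each pair of $\Rb(\y)$ crosses at most once, at time $\tinter_{\alpha:i,\beta:j}(\y)$). What genuinely differs is the organisation of the telescoping. The paper follows the two processes simultaneously, setting $\y_r=\Phi(\y;t_r)$ and $\tilde{\y}_r=\tPhi[\tblambda(\y)](\y;t_r)$, and telescopes the distance increments $\|\y_r-\tilde{\y}_r\|_1-\|\y_{r-1}-\tilde{\y}_{r-1}\|_1$; estimate~\eqref{eq:stabSPD} then produces on each subinterval the \emph{cumulative} velocity discrepancy $\sum_{\gamma:k}|\tlambda^{\gamma}_k(\y_{r-1})-\tlambda^{\gamma}_k(\y)|$, which must be reduced to single-collision increments by a further triangle inequality and a summation by parts. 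Your hybrid processes $\Psi^{(\ell)}$ instead swap one velocity reset at a time: consecutive hybrids coincide up to $t_\ell$ and restart from the same configuration $\y^{(\ell)}$ there, so~\eqref{eq:stabSPD} directly yields the single-collision increment $\sum_{\gamma:k}|\tlambda^{\gamma}_k(\y^{(\ell)})-\tlambda^{\gamma}_k(\y^{(\ell-1)})|$ weighted by $\Delta-t_\ell\leq\Delta$, and no summation by parts is needed. The price is the verification that the hybrids are well defined and interpolate between $\tPhi_{\Delta}(\y)$ and $\Phi_{\Delta}(\y)$; this is exactly the description of the MSPD as TSPD segments restarted at intertype collisions, together with the flow property of the TSPD with frozen velocity array, which are precisely the constructions and properties of~\cite{jr} (Proposition~3.2.8 there) that the paper's own proof also invokes --- so the ``obstacle'' you flag at the end is not a gap, and your handling of simultaneous crossings and of a possible collision at time $\Delta$ (which contributes zero since $\Delta-t_p=0$) is sound. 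Both routes give the same constant; yours is marginally cleaner in the bookkeeping, while the paper's avoids introducing auxiliary processes.
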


\begin{proof}[Proof of Lemma~\ref{lem:onestep}]
  Let $t_0 := 0 < t_1 < \cdots < t_R < \Delta =: t_{R+1}$ refer to the successive instants of collisions (\ie $t^*(\y)$, $t^*(\Phi(\y;t^*(\y)))$, etc.) in the MSPD started at $\y$, on the time interval $[0,\Delta]$. For all $r \in \{0, \ldots, R+1\}$, let us denote $\y_r := \Phi(\y;t_r)$ and $\tilde{\y}_r := \tPhi[\tblambda(\y)](\y;t_r)$, so that $\y_0 = \tilde{\y}_0 = \y$ while $\y_{R+1} = \Phi_{\Delta}(\y)$ and $\tilde{\y}_{R+1} = \tPhi_{\Delta}(\y)$. Besides, the definition of the MSPD and flow property of both the TSPD and the MSPD yield, for all $r \in \{1, \ldots, R+1\}$,
  \begin{equation*}
    \y_r = \tPhi[\tblambda(\y_{r-1})](\y_{r-1};t_r - t_{r-1}), \qquad \tilde{\y}_r = \tPhi[\tblambda(\y)](\tilde{\y}_{r-1};t_r - t_{r-1}).
  \end{equation*}
  As a consequence,
  \begin{equation*}
    \begin{aligned}
      & \|\Phi_{\Delta}(\y) - \tPhi_{\Delta}(\y)\|_1 = \sum_{r=1}^{R+1} \|\y_r - \tilde{\y}_r\|_1 - \|\y_{r-1} - \tilde{\y}_{r-1}\|_1\\
      & \qquad = \sum_{r=1}^{R+1} \|\tPhi[\tblambda(\y_{r-1})](\y_{r-1};t_r - t_{r-1}) - \tPhi[\tblambda(\y)](\tilde{\y}_{r-1};t_r - t_{r-1})\|_1 - \|\y_{r-1} - \tilde{\y}_{r-1}\|_1\\
      & \qquad  \leq \frac{1}{n}\sum_{r=1}^{R+1} (t_r-t_{r-1}) \sum_{\gamma:k \in \Part} \left|\tlambda^{\gamma}_k(\y_{r-1}) - \tlambda^{\gamma}_k(\y)\right|,
    \end{aligned}
  \end{equation*}
  where the last line is obtained by applying the stability property of the SPD~\eqref{eq:stabSPD} typewise. Using the triangle inequality 
  \begin{equation*}
    \left|\tlambda^{\gamma}_k(\y_{r-1}) - \tlambda^{\gamma}_k(\y)\right| \leq \left|\tlambda^{\gamma}_k(\y_{r-1}) - \tlambda^{\gamma}_k(\y_{r-2})\right| + \cdots + \left|\tlambda^{\gamma}_k(\y_1) - \tlambda^{\gamma}_k(\y_0)\right|
  \end{equation*}
  and summing by parts, we obtain
  \begin{equation*}
    \|\Phi_{\Delta}(\y) - \tPhi_{\Delta}(\y)\|_1 \leq \frac{\Delta}{n} \sum_{r=1}^R \sum_{\gamma:k \in \Part}\left|\tlambda^{\gamma}_k(\y_r) - \tlambda^{\gamma}_k(\y_{r-1})\right|.
  \end{equation*}
  Now the definition~\eqref{eq:vitesses} of the coefficients $\tlambda^{\gamma}_k$ combined with Assumption~\eqref{ass:LC:2} imply
  \begin{equation*}
    \sum_{\gamma:k \in \Part}\left|\tlambda^{\gamma}_k(\y_r) - \tlambda^{\gamma}_k(\y_{r-1})\right| \leq \frac{\ConstLip}{n} \sum_{\gamma:k \in \Part} \sum_{\gamma' \not= \gamma} \sum_{k'=1}^n \ind{\tinter_{\alpha:i, \beta:j}(\y) = t_r},
  \end{equation*}
  where $(\alpha:i, \beta:j)$ in the last term refers to $(\gamma:k, \gamma':k')$ if $\gamma<\gamma'$ and to $(\gamma':k', \gamma:k)$ if $\gamma>\gamma'$. We note that the argument here is similar to the one employed in the proof of Fact~2 in~\cite[Lemma~7.2.4]{jr}.
  
  Since each pair $(\alpha:i, \beta:j) \in \Rb(\y)$ such that $\tinter_{\alpha:i, \beta:j}(\y) = t_r$ appears twice in the sum in the right-hand side above, we deduce that
  \begin{equation*}
    \sum_{\gamma:k \in \Part} \sum_{\gamma' \not= \gamma} \sum_{k'=1}^n \ind{\tinter_{\alpha:i, \beta:j}(\y) = t_r} = 2\sum_{(\alpha:i, \beta:j) \in \Rb(\y)} \ind{\tinter_{\alpha:i, \beta:j}(\y) = t_r},
  \end{equation*}
  whence
  \begin{equation*}
    \sum_{r=1}^R \sum_{\gamma:k \in \Part}\left|\tlambda^{\gamma}_k(\y_r) - \tlambda^{\gamma}_k(\y_{r-1})\right| \leq \frac{2 \ConstLip}{n} \sum_{(\alpha:i, \beta:j) \in \Rb(\y)} \ind{\tinter_{\alpha:i, \beta:j}(\y) \leq \Delta}
  \end{equation*}
  which completes the proof.
\end{proof}

For all $\y \in D_n^d$, we denote by
\begin{equation}\label{defndel}
  \Nb_{\Delta}(\y) := \sum_{(\alpha:i, \beta:j) \in \Rb(\y)} \ind{\tinter_{\alpha:i, \beta:j}(\y) \leq \Delta}
\end{equation}
the number of collisions occurring in the MSPD started as $\y$ on the time interval $[0,\Delta]$. Combining Lemma~\ref{lem:onestep} with the first estimate~\eqref{eq:estimstab}, we obtain
\begin{equation}\label{eq:estim2}
  \|\Phi^L_{\Delta}(\x) - \tPhi^L_{\Delta}(\x)\|_1 \leq 2\Delta\frac{\ConstLip\ConstStab_1}{n^2} \sum_{\ell=1}^L \Nb_{\Delta}(\tPhi^{\ell-1}_{\Delta}(\x)),
\end{equation} 
for all $L \geq 1$. Of course, each term $\Nb_{\Delta}(\tPhi^{\ell-1}_{\Delta}(\x))$ is bounded by $d(d-1)n^2/2$. Our purpose is to exhibit a bound of this order (with respect to $n$) for the whole sum, uniformly in $L$. We shall rely on the following remark, which is a consequence of Assumption~\eqref{ass:USH} and of the boundedness of the velocities.

\begin{rk}\label{rk:csqUSH}
  Let $\y \in D_n^d$ and $\alpha:i, \beta:j \in \Part$ such that $\alpha < \beta$. Denote $\z := \tPhi_{\Delta}(\y)$.
  \begin{itemize}
    \item If $(\alpha:i, \beta:j) \not\in \Rb(\y)$, then $(\alpha:i, \beta:j) \not\in \Rb(\z)$.
    \item $z^{\beta}_j - z^{\alpha}_i \leq  y^{\beta}_j - y^{\alpha}_i - \ConstUSH\Delta$ and, if $(\alpha:i, \beta:j) \in \Rb(\y)$ and $\tinter_{\alpha:i, \beta:j}(\y) \leq \Delta$, then $0 < y^{\beta}_j - y^{\alpha}_i \leq 2 \ConstBoundS \Delta$.
  \end{itemize}
\end{rk}

We deduce from the first part of the remark that, for all $\ell \in \{1, \ldots, L\}$, $\Rb(\tPhi^{\ell-1}_{\Delta}(\x)) \subset \Rb(\x)$, which allows us to rewrite
\begin{equation*}
  \sum_{\ell=1}^L \Nb_{\Delta}(\tPhi^{\ell-1}_{\Delta}(\x)) = \sum_{(\alpha:i, \beta:j) \in \Rb(\x)} \sum_{\ell=1}^L \ind{(\alpha:i, \beta:j) \in \Rb(\tPhi^{\ell-1}_{\Delta}(\x)), \tinter_{\alpha:i, \beta:j}(\tPhi^{\ell-1}_{\Delta}(\x)) \leq \Delta}.
\end{equation*}
Now for all $(\alpha:i, \beta:j) \in \Rb(\x)$, let $\ell_{\alpha:i, \beta:j}$ be the lowest index in $\{1, \ldots, L\}$ such that $(\alpha:i, \beta:j) \in \Rb(\tPhi^{\ell-1}_{\Delta}(\x))$ and $\tinter_{\alpha:i, \beta:j}(\tPhi^{\ell-1}_{\Delta}(\x)) \leq \Delta$. If there is no such index then the contribution of $(\alpha:i, \beta:j)$ in the sum above is null. Otherwise, the second part of the remark implies that after at most 
\begin{equation*}
  m := \lceil 2 \ConstBoundS/{\ConstUSH} \rceil
\end{equation*}
iterations, the pair $(\alpha:i, \beta:j)$ can no longer belong to $\Rb(\tPhi^{\ell_{\alpha:i, \beta:j}-1+m}_{\Delta}(\x))$. We deduce that
\begin{equation}\label{nbintercouplfix}
  \sum_{\ell=1}^L \ind{(\alpha:i, \beta:j) \in \Rb(\tPhi^{\ell-1}_{\Delta}(\x)), \tinter_{\alpha:i, \beta:j}(\tPhi^{\ell-1}_{\Delta}(\x)) \leq \Delta} \leq m,
\end{equation}
and therefore conclude that
\begin{equation*}
  \sum_{\ell=1}^L \Nb_{\Delta}(\tPhi^{\ell-1}_{\Delta}(\x)) \leq  md(d-1)n^2/2.
\end{equation*}
Injecting this bound in~\eqref{eq:estim2}, we complete the proof of \eqref{errLspd} with the constant
\begin{equation}\label{eq:C}	
  C := d(d-1) \ConstStab_1\ConstLip \lceil 2 \ConstBoundS/{\ConstUSH} \rceil.
\end{equation}

Now, for $t\in[(L-1)\Delta,L\Delta]$, by reasoning like in the derivation of \eqref{eq:estimstab}, one obtains
\begin{align*}\|\Phi(\x;t) - \tPhi_{\Delta}(\x;t)\|_1\leq& \sum_{\ell=1}^{L-1} \|\Phi_{t-\ell\Delta}(\Phi_{\Delta}-\tPhi_{\Delta})\tPhi^{\ell-1}_{\Delta}(\x)\|_1\\&+\|(\Phi_{t-(L-1)\Delta}-\tPhi_{t-(L-1)\Delta})\tPhi^{L-1}_{\Delta}(\x)\|_1\\\leq &\ConstStab_1\sum_{\ell=1}^{L-1} \|(\Phi_{\Delta}-\tPhi_{\Delta})\tPhi^{\ell-1}_{\Delta}(\x)\|_1\\&+\|(\Phi_{t-(L-1)\Delta}-\tPhi_{t-(L-1)\Delta})\tPhi^{L-1}_{\Delta}(\x)\|_1.
  \end{align*}
Since by Lemma \ref{lem:onestep}, 
\begin{align*}
   \|(\Phi_{t-(L-1)\Delta}-\tPhi_{t-(L-1)\Delta})\tPhi^{L-1}_{\Delta}(\x)\|_1&\leq 2(t-(L-1)\Delta)\frac{\ConstLip}{n^2} \sum_{(\alpha:i, \beta:j) \in \Rb(\y)} \ind{\tinter_{\alpha:i, \beta:j}(\y) \leq t-(L-1)\Delta }\\
&\leq 2\Delta\frac{\ConstLip}{n^2} \sum_{(\alpha:i, \beta:j) \in \Rb(\y)} \ind{\tinter_{\alpha:i, \beta:j}(\y) \leq \Delta},
\end{align*}
the above derived upper-bound for $\|\Phi^L_{\Delta}(\x) - \tPhi^L_{\Delta}(\x)\|_1$ still holds for all $t\in[(L-1)\Delta,L\Delta]$.


\subsection{Rate of convergence of the MSPD and its TSPD approximation with step \texorpdfstring{$\Delta$}{Delta}} The purpose of this subsection is to estimate the $\Ls^1$ distance between the semigroup solution $\bu=(u^1,\ldots,u^d)$ to the diagonal hyperbolic system~\eqref{eq:syst} with initial data $\bu_0=(H*m^1,\ldots,H*m^d)$ for some $\bm=(m^1,\ldots,m^d) \in \Ps(\R)^d$, and the empirical CDF $\bu_n[\x(n)]=(u_n^1[\x(n)],\ldots,u_n^d[\x(n)])$ associated with the MSPD started at some configuration $\x(n) \in D_n^d$, over finite time horizons. 

We also compare $\bu$ with the empirical CDF $\tilde{\bu}_{n,\Delta}[\x(n)]=(\tilde{u}_{n,\Delta}^1[\x(n)],\ldots,\tilde{u}_{n,\Delta}[\x(n)])$ obtained with the iterated TSPD with step $\Delta$ starting from $\x(n)$, that is
\begin{equation*}
  \tilde{u}_{n,\Delta}^{\gamma}[\x](t,x) := \frac{1}{n} \sum_{k=1}^n \ind{\tPhi_{\Delta,k}^{\gamma}(\x;t) \leq x}.
\end{equation*}

Given $\x \in D_n^d$, we set $\mu_0[\x]=(\mu^1_0[\x],\ldots,\mu^d_0[\x]) \in \Ps(\R)^d$.

\begin{theo}\label{theo:rateMSPD}
  Under Assumptions~\eqref{ass:LC:2} and~\eqref{ass:USH}, let $n \geq 1$, $\x(n) \in D_n^d$ and $\Delta >0$. Then, for all $t \geq 0$,
  \begin{equation*}
    \begin{aligned}
      & \|\bu(t,\cdot) - \bu_{n}[\x(n)](t,\cdot)\|_{\Ls^1(\R)^d} \leq {\mathcal L}_1\left(\Ws_1^{(d)}(\bm,\mu_0[\x(n)])+  \frac{t d\ConstLip}{n}\right),\\
      & \|\bu(t,\cdot) - \tilde{\bu}_{n,\Delta}[\x(n)](t,\cdot)\|_{\Ls^1(\R)^d}\leq {\mathcal L}_1\left(\Ws_1^{(d)}(\bm,\mu_0[\x(n)])+  \frac{td \ConstLip}{n}\right)+C\Delta,
    \end{aligned}
  \end{equation*}
  where the constant $C$ is given in~\eqref{eq:C}.

  When for all $\gamma\in\{1,\ldots, d\}$, $m^\gamma([a^\gamma,b^\gamma])=1$ with $-\infty<a^\gamma<b^\gamma<+\infty$ and 
  \begin{equation}\label{eq:discr:opt:2}
    \forall \gamma:k \in \Part, \qquad x_k^\gamma(n)=(u_0^\gamma)^{-1}\left(\frac{2k-1}{2n}\right),
  \end{equation} 
  then 
  \begin{equation*}
    \begin{aligned}
      & \|\bu(t,\cdot) - \bu_{n}[\x(n)](t,\cdot)\|_{\Ls^1(\R)^d}\leq {\mathcal L}_1\frac{\sum_{\gamma=1}^d(b^\gamma-a^\gamma)+t 2d\ConstLip}{2n},\\
      & \|\bu(t,\cdot) - \tilde{\bu}_{n,\Delta}[\x(n)](t,\cdot)\|_{\Ls^1(\R)^d}\leq {\mathcal L}_1\frac{\sum_{\gamma=1}^d(b^\gamma-a^\gamma)+t 2d\ConstLip}{2n}+C\Delta.
    \end{aligned}
  \end{equation*}
\end{theo}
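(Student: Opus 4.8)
The plan is to transpose the two-step splitting of Section~\ref{s:scl}, the only genuinely new difficulty being that the MSPD has no analogue of the velocity-stability estimate~\eqref{eq:stabSPD}; I would circumvent this through the iterated TSPD. First I split
\begin{equation*}
  \|\bu(t,\cdot)-\bu_n[\x(n)](t,\cdot)\|_{\Ls^1(\R)^d} \leq \|\bu(t,\cdot)-\bu_{\infty}[\x(n)](t,\cdot)\|_{\Ls^1(\R)^d} + \|\bu_{\infty}[\x(n)](t,\cdot)-\bu_n[\x(n)](t,\cdot)\|_{\Ls^1(\R)^d},
\end{equation*}
and bound the first, discretisation term by $\ConstStab_1\Ws_1^{(d)}(\bm,\mu_0[\x(n)])$ using the semigroup stability of Theorem~\ref{theo:cvMSPD} together with the identity~\eqref{eq:W1d}. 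Everything then reduces to proving the non-entropicity bound $\|\bu_{\infty}[\x]-\bu_n[\x]\|_{\Ls^1(\R)^d} \leq \ConstStab_1 td\ConstLip/n$.

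For this I would reproduce the dyadic doubling of Proposition~\ref{prop:rateSPD}. Writing $\hat{\x}\in D_{2n}^d$ for the typewise duplication $\hat x^{\gamma}_{2k-1}=\hat x^{\gamma}_{2k}=x^{\gamma}_k$ and $\hat{\x}^m$ for its $m$-fold iterate, the crux is the one-step estimate
\begin{equation*}
  \|\bu_N[\z](t,\cdot)-\bu_{2N}[\hat{\z}](t,\cdot)\|_{\Ls^1(\R)^d} \leq \ConstStab_1\frac{t d\ConstLip}{2N}, \qquad \z \in D_N^d.
\end{equation*}
Since the $N$- and $2N$-particle MSPD cannot be compared directly, I would bracket each by its iterated TSPD with step $\Delta$ (each within $C\Delta$ by Proposition~\ref{prop:schema}, the distance on $D_n^d$ coinciding with the $\Ls^1(\R)^d$ distance of the empirical CDFs) and compare the two iterated TSPDs, letting $\Delta\downarrow0$. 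On each interval both evolve as a TSPD, so~\eqref{eq:stabSPD} applies typewise; viewing $\bu_N[\z]$ as a $2N$-particle system with duplicated positions and piecewise-constant velocities, each interval reproduces the coarse-versus-fine discretisation gap of~\eqref{eq:controln2n:scal}, contributing $\Delta\ConstLip/(2N)$ per type, so that summing over the $t/\Delta$ intervals and the $d$ types yields $td\ConstLip/(2N)$, while the propagation of the discrepancy across intervals is absorbed into a single factor $\ConstStab_1$ exactly as in~\eqref{eq:estimstab} and Lemma~\ref{lem:onestep}. The hard part is precisely this per-interval comparison: the velocities~\eqref{eq:vitesses} depend on the cross-type ranks $\omega^{\gamma'}_{\gamma:k}$, so reducing the velocity discrepancy to the pure granularity gap requires that the duplicated $N$-particle and the genuine $2N$-particle systems keep the same cross-type ordering at the start of each interval. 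They start from the same configuration and Assumption~\eqref{ass:USH} keeps clusters of distinct types drifting apart (Remark~\ref{rk:csqUSH}), but the genuine system spreads particles within each type whereas the duplicated one keeps duplicates merged; controlling the resulting rank mismatch uniformly in $\Delta$ is the main obstacle.

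Granting the one-step estimate, I would telescope
\begin{equation*}
  \|\bu_n[\x](t,\cdot)-\bu_{\infty}[\x](t,\cdot)\|_{\Ls^1(\R)^d} \leq \sum_{m \geq 0}\|\bu_{2^m n}[\hat{\x}^m](t,\cdot)-\bu_{2^{m+1} n}[\hat{\x}^{m+1}](t,\cdot)\|_{\Ls^1(\R)^d} \leq \sum_{m \geq 0} \ConstStab_1\frac{t d\ConstLip}{2^{m+1}n} = \ConstStab_1\frac{t d\ConstLip}{n},
\end{equation*}
where $\bu_{2^M n}[\hat{\x}^M]\to\bu_{\infty}[\x]$ as $M\to+\infty$ follows from Theorem~\ref{theo:cvMSPD} and its extension in Corollary~\ref{cor:cvMSPD} — duplication leaves $\mu_0[\x]$ unchanged — together with finite speed of propagation to upgrade almost-everywhere convergence to $\Ls^1$, as in the final step of Proposition~\ref{prop:rateSPD}. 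This gives the first stated bound; the second follows by adding $\|\bu_n[\x](t,\cdot)-\tilde{\bu}_{n,\Delta}[\x](t,\cdot)\|_{\Ls^1(\R)^d}=\|\Phi(\x;t)-\tPhi_{\Delta}(\x;t)\|_1\leq C\Delta$ from Proposition~\ref{prop:schema}, with $C$ as in~\eqref{eq:C}. Finally, in the compactly supported case with the optimal choice~\eqref{eq:discr:opt:2}, Lemma~\ref{lem:discr}\eqref{it:discr:3} gives $\Ws_1(m^{\gamma},\mu^{\gamma}_0[\x(n)])\leq(b^{\gamma}-a^{\gamma})/(2n)$, and summing over $\gamma$ through~\eqref{eq:W1d} turns both bounds into the stated explicit ones.
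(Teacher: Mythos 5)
Your overall architecture is the same as the paper's: the splitting into discretisation plus non-entropicity error, the bound $\ConstStab_1\Ws_1^{(d)}(\bm,\mu_0[\x(n)])$ via the stability of Theorem~\ref{theo:cvMSPD}, Lemma~\ref{lem:discr}~(\ref{it:discr:3}) for the compactly supported case, Proposition~\ref{prop:schema} for the $\tilde{\bu}_{n,\Delta}$ bounds, and the dyadic duplication/telescoping reducing everything to the one-step estimate $\|\bu_N[\z](t,\cdot)-\bu_{2N}[\hat{\z}](t,\cdot)\|_{\Ls^1(\R)^d}\leq\ConstStab_1 td\ConstLip/(2N)$. But that one-step estimate is precisely what you do not prove, and the route you sketch for it does not work: if you bracket each MSPD by \emph{its own} iterated TSPD and then compare the two iterated TSPDs over all $t/\Delta$ intervals, the two trajectories diverge after the first step, so at the start of each subsequent interval the velocities~\eqref{eq:vitesses} are computed from \emph{different} cross-type rank configurations. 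The resulting rank-mismatch contribution to the velocity discrepancy is not a granularity term, does not telescope, and is not obviously $o(1)$ as $\Delta\dto 0$ (ranks are discontinuous functions of the configuration). You correctly identify this as "the main obstacle", but acknowledging an obstacle is not the same as overcoming it; as written, the core of the proof is missing.

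The paper's resolution is a genuinely different decomposition that makes the obstacle disappear: instead of comparing two free-running approximations, it writes $\Phi(\hat{\x};t)-\widehat{\Phi(\x;t)}$ as a telescoping sum whose $\ell$-th increment compares $\Phi_\Delta\widehat{\Phi^{\ell-1}_\Delta(\x)}$ with $\widehat{\Phi_\Delta\Phi^{\ell-1}_\Delta(\x)}$, i.e.\ both terms in each increment are \emph{re-based at the same duplicated configuration} $\widehat{\Phi^{\ell-1}_\Delta(\x)}$ along the genuine $n$-particle MSPD trajectory, the earlier discrepancies being absorbed by the stability constant $\ConstStab_1$ of Proposition~\ref{prop:stabMSPD}. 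Since duplication doubles all ranks, the scaled cross-type ranks of the two systems then agree exactly at the start of each step, so the TSPD-vs-TSPD comparison reduces to the pure granularity gap $\Delta d\ConstLip/(2n)$ as in~\eqref{eq:controln2n:scal}, while the two MSPD-vs-TSPD errors within each step are controlled by Lemma~\ref{lem:onestep}; the collision counts accumulated over all steps stay of order $n^2$ uniformly in $L$ (each pair collides at most once along the genuine MSPD, and at most $\lceil 2\ConstBoundS/\ConstUSH\rceil$ times for the duplicated restarts, by the argument of Remark~\ref{rk:csqUSH}), so their total contribution is $\grandO(\Delta)$ and vanishes when $\Delta\dto 0$. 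Separately, your final limiting step is circular: Corollary~\ref{cor:cvMSPD} is \emph{deduced from} Theorem~\ref{theo:rateMSPD}, so it cannot be invoked in its proof, and Theorem~\ref{theo:cvMSPD} alone does not apply to the sequence $\hat{\x}^M$ since it is stated only for the configurations $\chi_n\bm$ of~\eqref{eq:chin}. The paper fixes this by inserting $\chi_{2^Mn}\bm_n$ with $\bm_n=\mu_0[\x]$ via the triangle inequality, applying Theorem~\ref{theo:cvMSPD} to that sequence, and controlling $\|\hat{\x}^M-\chi_{2^Mn}\bm_n\|_1$ through Proposition~\ref{prop:stabMSPD} together with the weak convergence of $\mu_0[\chi_{2^Mn}\bm_n]$ to $\bm_n$.
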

\begin{proof}
The estimations involving $\tilde{\bu}_{n,\Delta}$ are an immediate consequence of Proposition \ref{prop:schema} and the estimations involving $\bu_{n}[\x(n)]$. To prove those estimations, following the approach described in the introduction of the article, we first write, for all $t \geq 0$,
\begin{equation}\label{eq:decomp:syst}
  \begin{aligned}
    \|\bu(t,\cdot) - \bu_n[\x(n)](t,\cdot)\|_{\Ls^1(\R)^d} & \leq \|\bu(t,\cdot) - \bu_{\infty}[\x(n)](t,\cdot)\|_{\Ls^1(\R)^d}\\
    & \quad + \|\bu_{\infty}[\x(n)](t,\cdot) - \bu_n[\x(n)](t,\cdot)\|_{\Ls^1(\R)^d},
  \end{aligned}
\end{equation}
where $\bu_{\infty}[\x(n)]$ refers to semigroup solution to the diagonal hyperbolic system with initial condition $\bu_{\infty,0}[\x(n)] = (H*\mu^1_0[\x(n)],\ldots,H*\mu^d_0[\x(n)])$. The $\Ls^1$ stability property of Theorem~\ref{theo:cvMSPD} yields
\begin{equation*}
  \|\bu(t,\cdot) - \bu_{\infty}[\x(n)](t,\cdot)\|_{\Ls^1(\R)^d}\leq {\mathcal L}_1\|\bu_0 - \bu_{\infty,0}[\x(n)]\|_{\Ls^1(\R)^d} = {\mathcal L}_1\Ws^{(d)}_1(\bm,\mu_0[\x(n)]),
\end{equation*}
which, according to~\eqref{it:discr:3} in Lemma~\ref{lem:discr}, is smaller than $\sum_{\gamma=1}^d (b^{\gamma}-a^{\gamma})/(2n)$ if $m^{\gamma}([a^{\gamma},b^{\gamma}])=1$ for all $\gamma$, and $\x(n)$ is given by~\eqref{eq:discr:opt:2}. The following proposition allows to control the second term in the right-hand side of~\eqref{eq:decomp:syst}.
\end{proof}

\begin{prop}\label{prop:rateMSPD}
  Under Assumptions~\eqref{ass:LC:2} and~\eqref{ass:USH}, let $n \geq 1$ and $\x \in D^d_n$. For all $t \geq 0$,
  \begin{equation*}
    \|\bu_{\infty}[\x](t,\cdot) - \bu_n[\x](t,\cdot)\|_{\Ls^1(\R)^d} \leq {\mathcal L}_1\frac{t d\ConstLip}{n}.
  \end{equation*}
\end{prop}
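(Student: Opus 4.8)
I want to bound the non-entropicity error $\|\bu_{\infty}[\x](t,\cdot) - \bu_n[\x](t,\cdot)\|_{\Ls^1(\R)^d}$ by ${\mathcal L}_1 td\ConstLip/n$. The natural strategy, mirroring the scalar Proposition~\ref{prop:rateSPD}, is to interpolate between the finite-$n$ MSPD and the semigroup solution $\bu_\infty[\x]$ (which is the $n\to\infty$ limit of the MSPD started from the fixed discrete measure $\mu_0[\x]$) through the duplication operator $\x\mapsto\hat{\x}$ on $D_n^d$ applied typewise. So I would first aim to prove the one-doubling estimate
\begin{equation*}
  \|\bu_n[\x](t,\cdot) - \bu_{2n}[\hat{\x}](t,\cdot)\|_{\Ls^1(\R)^d} \leq {\mathcal L}_1\frac{t d\ConstLip}{2n},
\end{equation*}
then sum the geometric series over iterated doublings $\hat{\x}^m\in D_{2^mn}^d$ and pass to the limit using Theorem~\ref{theo:cvMSPD} together with finite speed of propagation, exactly as in the scalar proof.

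\textbf{The key steps.} The main difficulty, relative to the scalar case, is that there is \emph{no} stability estimate of the MSPD with respect to the velocity fields; Proposition~\ref{prop:stabMSPD} only gives stability with respect to the initial configuration, with the multiplicative constant $\ConstStab_1$. This is precisely why the proof must route through the iterated TSPD scheme of Subsection~\ref{ss:approxMSPD}. Concretely, I would compare $\bu_n[\x]$ and $\bu_{2n}[\hat\x]$ by comparing their iterated-TSPD approximations $\tPhi_\Delta$ and letting $\Delta\to 0$: on each TSPD step the dynamics decouples typewise, so the scalar stability inequality~\eqref{eq:stabSPD} applies coordinate by coordinate and the velocity discrepancy between the $n$-particle and $2n$-particle velocity coefficients is controlled by Assumption~\eqref{ass:LC:2} just as in~\eqref{eq:controln2n:scal}. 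The subtlety is that the velocities $\tlambda^\gamma_k$ in the MSPD depend on the cross-type ranks $\omega^{\gamma'}_{\gamma:k}$, which must be shown to match under duplication: duplicating every particle preserves all scaled ranks $\omega^{\gamma'}_{\gamma:k}$, so the velocity coefficient for the duplicated pair $2k-1,2k$ in the $2n$-system equals the average over the two relevant half-cells and differs from the original by at most $\ConstLip/(2n)$ per coordinate after accounting for the factor $d$ from the sum over types in~\eqref{ass:LC:2}. Summing over the $d$ types and $n$ particles, and integrating against Proposition~\ref{prop:schema} to remove the $\Delta$-dependence, yields the factor $td\ConstLip/(2n)$; the constant $\ConstStab_1$ enters once when combining the doubling estimates via the MSPD stability, explaining the prefactor ${\mathcal L}_1$.

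\textbf{The limit step.} Having the one-doubling bound, I would write the telescoping bound
\begin{equation*}
  \|\bu_n[\x](t,\cdot) - \bu_\infty[\x](t,\cdot)\|_{\Ls^1(\R)^d}
  \leq \sum_{m=1}^M {\mathcal L}_1\frac{td\ConstLip}{2^m n}
  + \|\bu_{2^Mn}[\hat\x^M](t,\cdot) - \bu_\infty[\x](t,\cdot)\|_{\Ls^1(\R)^d},
\end{equation*}
where each successive comparison carries its own $\ConstStab_1$ factor that I would need to absorb or control carefully (one clean option is to use that ${\mathcal L}_1\geq 1$ and that the doubling bounds themselves already include the prefactor, so the geometric series sums to ${\mathcal L}_1 td\ConstLip/n$). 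The residual term vanishes as $M\to\infty$ because $\mu_0[\hat\x^M]=\mu_0[\x]$ is independent of $M$, so Theorem~\ref{theo:cvMSPD} gives $\dd x$-almost-everywhere convergence of each coordinate to $u^\gamma_\infty[\x](t,\cdot)$, and the finite speed of propagation (inherited by each coordinate from Theorem~\ref{theo:cvMSPD}) confines all the differences to a fixed compact set independent of $M$, so Lebesgue's dominated convergence applies. I expect the main obstacle to be the bookkeeping of the $\ConstStab_1$ factors through the telescoping sum and the verification that rank-preservation under duplication makes the velocity comparison tight; everything else transcribes from the scalar argument.
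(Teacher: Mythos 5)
Your overall skeleton --- typewise duplication, a one-doubling bound, a telescoping geometric series (correctly summed without compounding stability constants), and a limit step via finite speed of propagation --- matches the paper's proof, but two of your key steps fail as described. First, the one-doubling estimate: you propose to compare the two iterated TSPDs $\tPhi_{\Delta}(\hat{\x};t)$ and $\widehat{\tPhi_{\Delta}(\x;t)}$ directly (using Proposition~\ref{prop:schema} to pass back to the MSPDs), with the velocity discrepancy controlled by rank preservation under duplication. Rank preservation, however, holds only as long as the two configurations are \emph{exactly} duplication-related: after the first $\Delta$-step, $\tPhi_{\Delta}(\hat{\x})$ and $\widehat{\tPhi_{\Delta}(\x)}$ differ by the accumulated error, and since the coefficients~\eqref{eq:vitesses} depend on the ranks $\omega^{\gamma'}_{\gamma:k}$ --- discontinuous functions of the positions --- an $\Ls^1$-small discrepancy gives no bound at all on the velocity discrepancy at the next step. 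Your recursion therefore does not close beyond the first step. The paper's proof is organised precisely to dodge this: it telescopes along the MSPD flow, writing $\Phi(\hat{\x};t)-\widehat{\Phi(\x;t)}$ as $\sum_{\ell}\Phi_{t-\ell\Delta}\bigl(\Phi_{\Delta}\widehat{\Phi^{\ell-1}_{\Delta}(\x)}-\widehat{\Phi^{\ell}_{\Delta}(\x)}\bigr)+\cdots$, so that every one-step comparison starts from exactly duplication-related configurations; the TSPD is inserted only inside each step, the MSPD-vs-TSPD discrepancies are bounded by Lemma~\ref{lem:onestep}, and a separate collision-counting argument (the adaptation of Remark~\ref{rk:csqUSH} to duplicated configurations, yielding~\eqref{eq:nbintercouplfix:2}) shows that these discrepancies total $\grandO(\Delta)$ uniformly in the number of steps, hence vanish as $\Delta\to0$. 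Note also that in this structure the factor ${\mathcal L}_1$ enters because each one-step error is pushed forward by the MSPD flow via Proposition~\ref{prop:stabMSPD}; in your scheme, which only ever uses~\eqref{eq:stabSPD} (constant $1$) and the vanishing $\grandO(\Delta)$ terms, there is no step where ${\mathcal L}_1$ could legitimately appear --- a warning sign, since your argument would ``prove'' the stronger bound $td\ConstLip/n$ without it.

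Second, the limit step: you apply Theorem~\ref{theo:cvMSPD} to the configurations $\hat{\x}^M$ on the grounds that $\mu_0[\hat{\x}^M]=\mu_0[\x]$. Unlike the scalar Theorem~\ref{theo:cvSPD}, which holds for any sequence of configurations with weakly convergent empirical measures, Theorem~\ref{theo:cvMSPD} is stated only for initial configurations of the form $\chi_n\bm$ produced by the discretisation operator~\eqref{eq:chin}; convergence for general approximating sequences is precisely Corollary~\ref{cor:cvMSPD}, which the paper \emph{deduces} from the proposition you are proving, so invoking it here would be circular. The paper bridges this by inserting $\chi_{2^Mn}\bm_n$ with $\bm_n:=\mu_0[\x]$: the term $\|\bu_{2^Mn}[\chi_{2^Mn}\bm_n](t,\cdot)-\bu_{\infty}[\x](t,\cdot)\|_{\Ls^1(\R)^d}$ vanishes by Theorem~\ref{theo:cvMSPD} together with finite speed of propagation, while $\|\bu_{2^Mn}[\hat{\x}^M](t,\cdot)-\bu_{2^Mn}[\chi_{2^Mn}\bm_n](t,\cdot)\|_{\Ls^1(\R)^d}$ is bounded via Proposition~\ref{prop:stabMSPD} by $\ConstStab_1\Ws_1^{(d)}(\bm_n,\mu_0[\chi_{2^Mn}\bm_n])$, which tends to $0$ by~\cite[Lemma~8.1.5]{jr} and compactness of the supports. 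Your dominated-convergence reasoning is fine, but only once this extra comparison is in place.
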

\begin{proof}
  For all $n \geq 1$, let us extend the duplication operator introduced in the proof of Proposition~\ref{prop:rateSPD} by setting
  \begin{equation*}
    \left\{\begin{array}{ccc}
      D^d_n & \to & D^d_{2n}\\
      \x & \mapsto & \hat{\x}
    \end{array}\right.
  \end{equation*}
  where $\hat{x}^\gamma_{2k-1} = \hat{x}^\gamma_{2k} = x^\gamma_k$ for all $k \in \{1, \ldots, n\}$ and $\gamma\in\{1,\ldots,d\}$.
  Notice that 
  \begin{equation}\label{stabdup}
    \forall \x,\y\in D_n^d, \qquad \|\hat{\x}-\hat{\y}\|_1=\|\x-\y\|_1.
  \end{equation}
  
  Let $\x\in D_n^d$. Like in the proof of Proposition \ref{prop:rateSPD}, we are going to estimate $\|\bu_{2n}[\hat{\x}](t,\cdot)-\bu_{n}[\x](t,\cdot)\|_{\Ls^1(\R)^d}$. The direct analysis of the MSPD being delicate, we introduce a ficticious step $\Delta>0$ to transform this analysis into the comparison between the TSPD with $n$ particles and $2n$ duplicated particles on each time-step.
Let $t>0$ and $L=\lceil t/\Delta\rceil$. One has 
  \begin{align*}
    \Phi(\hat{\x};t)-\widehat{\Phi(\x;t)}=\sum_{\ell=1}^{L-1}\Phi_{t-\ell\Delta}(\Phi_\Delta\widehat{\Phi^{\ell-1}_\Delta(\x)}-\widehat{\Phi^\ell_\Delta(\x)})+\Phi_{t-(L-1)\Delta}\widehat{\Phi^{L-1}_\Delta(\x)}-\widehat{\Phi_{t-(L-1)\Delta}}\Phi^{L-1}_\Delta(\x).
  \end{align*}
  Hence, by the triangle inequality and the stability of the MSPD,
  \begin{equation}\label{decomper}
    \begin{aligned}
      \|\bu_{2n}[\hat{\x}](t,\cdot)-\bu_{n}[\x](t,\cdot)\|_{\Ls^1(\R)^d} & =\|\Phi(\hat{\x};t)-\widehat{\Phi(\x;t)}\|_1\\
      & \leq {\mathcal L}_1\sum_{\ell=1}^{L-1}\|\Phi_\Delta\widehat{\Phi^{\ell-1}_\Delta(\x)}-\widehat{\Phi^\ell_\Delta(\x)}\|_1\\
      & \quad +\|\Phi_{t-(L-1)\Delta}\widehat{\Phi^{L-1}_\Delta(\x)}-\widehat{\Phi_{t-(L-1)\Delta}}\Phi^{L-1}_\Delta(\x)\|_1.
    \end{aligned}
  \end{equation}
  Now, by the triangle inequality and \eqref{stabdup}
  \begin{equation*}
    \begin{aligned}
      \|\Phi_\Delta\widehat{\Phi^{\ell-1}_\Delta(\x)}-\widehat{\Phi^\ell_\Delta(\x)}\|_1 & \leq \|\Phi_\Delta\widehat{\Phi^{\ell-1}_\Delta(\x)}-\tPhi_\Delta\widehat{\Phi^{\ell-1}_\Delta(\x)}\|_1\\
      & \quad +\|\tPhi_\Delta\widehat{\Phi^{\ell-1}_\Delta(\x)}-\widehat{\tPhi_\Delta}\Phi^{\ell-1}_\Delta(\x)\|_1\\
      & \quad +\|\tPhi_\Delta\Phi^{\ell-1}_\Delta(\x)-\Phi^\ell_\Delta(\x)\|_1.
    \end{aligned}
  \end{equation*}
  The second term in the right-hand side is a comparison at time $\Delta$ between the TSPD with $n$ particles starting from $\Phi^{\ell-1}_\Delta(\x)$ and the TSPD with $2n$ particles starting from the duplicated vector $\widehat{\Phi^{\ell-1}_\Delta(\x)}$. Reasoning like in the derivation of \eqref{eq:controln2n:scal}, we bound it from above by $\Delta d\ConstLip/(2n)$. By Lemma \ref{lem:onestep}, 
  \begin{equation*}
    \begin{aligned}
      & \|\Phi_\Delta\widehat{\Phi^{\ell-1}_\Delta(\x)}-\tPhi_\Delta\widehat{\Phi^{\ell-1}_\Delta(\x)}\|_1+\|\tPhi_\Delta\Phi^{\ell-1}_\Delta(\x)-\Phi^\ell_\Delta(\x)\|_1\\
      & \qquad \leq \frac{\Delta\ConstLip}{2n^2}\left(\Nb_{\Delta}(\widehat{\Phi^{\ell-1}_\Delta(\x)})+4 \Nb_{\Delta}(\Phi^{\ell-1}_\Delta(\x))\right),
    \end{aligned}
  \end{equation*}
  where $\Nb_{\Delta}$ has been defined in~\eqref{defndel}. Plugging these estimations in \eqref{decomper} and dealing in the same way with the last term in the right-hand side of \eqref{decomper}, we deduce that
  \begin{align*}
    \|\bu_{2n}[\hat{\x}](t,\cdot)-\bu_{n}[\x](t,\cdot)\|_{\Ls^1(\R)^d}\leq \frac{{\mathcal L}_1t d\ConstLip}{2n}+\frac{\Delta{\mathcal L}_1\ConstLip}{2n^2}\sum_{\ell=1}^L\left(
  \Nb_{\Delta}(\widehat{\Phi^{\ell-1}_\Delta(\x)})+4 \Nb_{\Delta}(\Phi^{\ell-1}_\Delta(\x))\right).
  \end{align*}
  Clearly, for all $\alpha:i, \beta:j \in \Part$ with $\alpha < \beta$, 
  \begin{align*}
    \sum_{\ell=1}^L \ind{(\alpha:i, \beta:j) \in \Rb(\Phi^{\ell-1}_{\Delta}(\x)), \tinter_{\alpha:i, \beta:j}(\Phi^{\ell-1}_{\Delta}(\x)) \leq \Delta} \leq 1.
  \end{align*}
  We now use the same arguments as in the derivation of~\eqref{nbintercouplfix} to obtain that, for all $\alpha:i, \beta:j \in P_{2n}^d$ with $\alpha < \beta$,
  \begin{equation}\label{eq:nbintercouplfix:2}
    \sum_{\ell=1}^L \ind{(\alpha:i, \beta:j) \in \Rb(\widehat{\Phi^{\ell-1}_\Delta(\x)}), \tinter_{\alpha:i, \beta:j}(\widehat{\Phi^{\ell-1}_\Delta(\x)}) \leq \Delta} \leq \lceil 2 \ConstBoundS/{\ConstUSH} \rceil.
  \end{equation}
  To this aim we only have to replace Remark~\ref{rk:csqUSH} with the following: let $\y \in D_n^d$ and $\alpha:i, \beta:j \in \Part$ such that $\alpha < \beta$, and denote $\z := \widehat{\Phi_{\Delta}(\y)}$.
  \begin{itemize}
    \item If $(\alpha:i, \beta:j) \not\in \Rb(\y)$, then for all $(i',j') \in \{2i-1, 2i\} \times \{2j-1,2j\}$, $(\alpha:i', \beta:j') \not\in \Rb(\z)$.
    \item For all $(i',j') \in \{2i-1, 2i\} \times \{2j-1,2j\}$, $z^{\beta}_{j'} - z^{\alpha}_{i'} \leq  y^{\beta}_j - y^{\alpha}_i - \ConstUSH\Delta$ and, if $(\alpha:i, \beta:j) \in \Rb(\y)$ and $\tinter_{\alpha:i, \beta:j}(\y) \leq \Delta$, then $0 < y^{\beta}_j - y^{\alpha}_i \leq 2 \ConstBoundS \Delta$.
  \end{itemize}
  The remainder of the argument is the same and leads to~\eqref{eq:nbintercouplfix:2}. As a consequence, 
  \begin{equation*}
    \sum_{\ell=1}^L(\Nb_{\Delta}(\widehat{\Phi^{\ell-1}_\Delta(\x)})+4 \Nb_{\Delta}(\Phi^{\ell-1}_\Delta(\x)))\leq 2d(d-1)n^2(1+\lceil 2 \ConstBoundS/{\ConstUSH} \rceil),
  \end{equation*}
  and
  \begin{equation*}
    \|\bu_{2n}[\hat{\x}](t,\cdot)-\bu_{n}[\x](t,\cdot)\|_{\Ls^1(\R)^d}\leq \frac{{\mathcal L}_1t d\ConstLip}{2n}+\Delta d(d-1){\mathcal L}_1\ConstLip (1+\lceil 2 \ConstBoundS/{\ConstUSH} \rceil).
  \end{equation*}
  Taking the limit $\Delta\to0$, we deduce that
  \begin{equation*}
    \|\bu_{2n}[\hat{\x}](t,\cdot)-\bu_{n}[\x](t,\cdot)\|_{\Ls^1(\R)^d}\leq \frac{{\mathcal L}_1t d\ConstLip}{2n}.
  \end{equation*}
  With this estimation replacing~\eqref{eq:controln2n:scal}, we shall conclude like in the proof of Proposition~\ref{prop:rateSPD}. We therefore need to show that
  \begin{equation*}
    \lim_{M \to +\infty} \|\bu_{2^Mn}[\hat{\x}^M](t,\cdot) - \bu_{\infty}[\x](t,\cdot)\|_{\Ls^1(\R)^d} = 0.
  \end{equation*}
  To this aim, we denote $\bm_n := \mu_0[\x]$ and recall the definition~\eqref{eq:chin} of the discretisation operator to write
  \begin{equation*}
    \begin{aligned}
      \|\bu_{2^Mn}[\hat{\x}^M](t,\cdot) - \bu_{\infty}[\x](t,\cdot)\|_{\Ls^1(\R)^d} & \leq \|\bu_{2^Mn}[\hat{\x}^M](t,\cdot) - \bu_{2^Mn}[\chi_{2^Mn}\bm_n](t,\cdot)\|_{\Ls^1(\R)^d}\\
      & \quad + \|\bu_{2^Mn}[\chi_{2^Mn}\bm_n](t,\cdot) - \bu_{\infty}[\x](t,\cdot)\|_{\Ls^1(\R)^d}.
    \end{aligned}
  \end{equation*}
  Using the same arguments as in the proof of Proposition~\ref{prop:rateSPD} but with Theorem~\ref{theo:cvMSPD} replacing Theorem~\ref{theo:cvSPD}, in particular the finite speed of propagation for both the MSPD and $\bu_{\infty}[\x]$, we obtain that the second term in the right-hand side vanishes. On the other hand, the discrete stability estimate of Proposition~\ref{prop:stabMSPD} yields
  \begin{equation*}
    \begin{aligned}
      \|\bu_{2^Mn}[\hat{\x}^M](t,\cdot) - \bu_{2^Mn}[\chi_{2^Mn}\bm_n](t,\cdot)\|_{\Ls^1(\R)^d} & = \|\Phi(\hat{\x}^M;t) - \Phi(\chi_{2^Mn}\bm_n;t)\|_1\\
      & \leq \ConstStab_1 \|\hat{\x}^M - \chi_{2^Mn}\bm_n\|_1\\
      & = \ConstStab_1 \Ws^{(d)}_1(\bm_n, \mu_0[\chi_{2^Mn}\bm_n]).
    \end{aligned}
  \end{equation*}
  It is a property of the discretisation operator that, for all $\gamma \in \{1, \ldots, d\}$, $\mu^{\gamma}_0[\chi_{2^Mn}\bm_n]$ converges weakly to $m^{\gamma}_n = \mu^{\gamma}_0[\x]$ when $M \to +\infty$~\cite[Lemma~8.1.5]{jr}. As a consequence, the corresponding empirical CDF converge $\dd x$-almost everywhere. Since the support of each measure $\mu^{\gamma}_0[\chi_{2^Mn}\bm_n]$ is contained in the compact set $[x^{\gamma}_1, x^{\gamma}_n]$, Lebesgue's theorem implies the convergence in $\Ls^1(\R)$ of these CDFs, which implies
  \begin{equation*}
    \lim_{M \to +\infty} \Ws^{(d)}_1(\bm_n, \mu_0[\chi_{2^Mn}\bm_n]) = 0
  \end{equation*}
  by~\eqref{eq:W1d}, and thereby completes the proof.
\end{proof}

As an immediate corollary of Theorem~\ref{theo:rateMSPD}, we obtain a convergence result for the MSPD to the semigroup solution of the system~\eqref{eq:syst} which holds under less stringent conditions on the sequence of initial configurations.
\begin{cor}\label{cor:cvMSPD}
  Under Assumptions~\eqref{ass:LC:2} and~\eqref{ass:USH}, let $\bm \in \Ps(\R)^d$ and let $(\x(n))_{n \geq 1}$ be such that for all $n \geq 1$, $\x(n) \in D_n^d$ and
  \begin{equation}\label{eq:condcvMSPD}
    \lim_{n \to +\infty} \Ws^{(d)}_1(\mu_0[\x(n)], \bm) = 0.
  \end{equation}
  For all $t \geq 0$, the empirical CDF $\bu_n[\x(n)](t,\cdot)$ converges in $\Ls^1(\R)^d$ to the semigroup solution $\bu(t, \cdot)$ of the system~\eqref{eq:syst} with initial data $\bu_0 = (H*m^1, \ldots, H*m^d)$.
\end{cor}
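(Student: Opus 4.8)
The plan is to deduce the corollary directly from the quantitative estimate of Theorem~\ref{theo:rateMSPD} by passing to the limit $n \to +\infty$; since the statement is advertised as an immediate consequence, the only work is to check that each term in the upper bound vanishes.

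First I would fix $t \geq 0$ and apply the first inequality of Theorem~\ref{theo:rateMSPD} to the configuration $\x(n)$, obtaining
\begin{equation*}
  \|\bu(t,\cdot) - \bu_{n}[\x(n)](t,\cdot)\|_{\Ls^1(\R)^d} \leq {\mathcal L}_1\left(\Ws_1^{(d)}(\bm,\mu_0[\x(n)]) + \frac{t d\ConstLip}{n}\right),
\end{equation*}
where $\bu$ denotes the semigroup solution associated with the fixed datum $\bm$, whose existence and uniqueness are guaranteed by Theorem~\ref{theo:cvMSPD} under Assumptions~\eqref{ass:LC:2} and~\eqref{ass:USH}. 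Next I would let $n \to +\infty$: the summand $t d\ConstLip/n$ tends to $0$ trivially, while the first summand equals $\Ws_1^{(d)}(\mu_0[\x(n)],\bm)$ by symmetry of the Wasserstein distance, which converges to $0$ by the standing hypothesis~\eqref{eq:condcvMSPD}. Hence the right-hand side vanishes, so that for each fixed $t$ one has $\|\bu(t,\cdot) - \bu_n[\x(n)](t,\cdot)\|_{\Ls^1(\R)^d} \to 0$, which is exactly the announced $\Ls^1(\R)^d$ convergence.

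I do not expect any genuine obstacle here: the non-entropicity bound of order $t/n$ and the reduction to the initial discretisation error have already been established in Proposition~\ref{prop:rateMSPD} and assembled in Theorem~\ref{theo:rateMSPD}. The only point deserving a word of care is that the hypothesis~\eqref{eq:condcvMSPD} is phrased in terms of $\Ws_1^{(d)}$ whereas the error is an $\Ls^1$ distance between cumulative distribution functions; these two quantities coincide through the identity~\eqref{eq:W1d}, so the hypothesis feeds directly into the bound. I would also flag that the convergence obtained is pointwise in $t$ rather than uniform, but this is precisely what the statement requires.
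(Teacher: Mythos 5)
Your proof is correct and follows exactly the paper's route: the corollary is stated there as an immediate consequence of Theorem~\ref{theo:rateMSPD}, obtained precisely by applying its first inequality to $\x(n)$ and letting $n \to +\infty$, with the discretisation term controlled by the hypothesis~\eqref{eq:condcvMSPD} and the non-entropicity term of order $t/n$ vanishing trivially. Your remarks on the symmetry of $\Ws_1^{(d)}$ and the identification~\eqref{eq:W1d} are accurate and complete the argument.
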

Under the condition~\eqref{eq:condcvMSPD}, this corollary extends both~\cite[Theorem~2.4.5]{jr} and~\cite[Theorem~2.6.5]{jr}: in the former it allows to identify the limit, in the latter it relaxes the assumption that $\x(n) = \chi_n\bm$. We however underline that a necessary condition for~\eqref{eq:condcvMSPD} to hold is that, for all $\gamma \in \{1, \ldots, d\}$, $m^{\gamma}$ have a finite first-order moment.


\section{Numerical implementation}\label{s:num}


\subsection{Scalar conservation laws}\label{ss:num:spd} This subsection is dedicated to the numerical implementation of the SPD in order to approximate the entropy solution to the scalar conservation law~\eqref{eq:scl}. The algorithm we use is described in~\S\ref{sss:BGalgo}. Then two case studies are presented:~\S\ref{sss:Burgers} addresses the Burgers equation
\begin{equation}\label{eq:Burgers}
  \partial_t u + \partial_x\left(\frac{u^2}{2}\right) = 0,
\end{equation}
with the CDF of the two-atom measure
\begin{equation}\label{eq:twoatom}
  m = \frac{1}{2}\left(\delta_{-1} + \delta_1\right)
\end{equation}
as initial datum, while~\S\ref{sss:concave} deals with the conservation law with concave flux function
\begin{equation}\label{eq:concave}
  \partial_t u + \partial_x\left(\frac{u(1-u)}{2}\right) = 0,
\end{equation}
with the CDF of the two-sided exponential measure (or Laplace distribution)
\begin{equation}\label{eq:biexpo}
  m(\dd x) = \frac{1}{2}\exp(-|x|)\dd x
\end{equation}
as initial datum.

\subsubsection{Numerical computation of the SPD}\label{sss:BGalgo} Given $t \geq 0$, a vector of (ranked) initial positions $\rx = (x_1, \ldots, x_n) \in D_n$, and a vector $\rblambda = (\rblambda_1, \ldots, \rblambda_n) \in \R^n$ of initial velocities, we use a remark due to Brenier and Grenier~\cite[Section~4]{bregre} to devise an algorithm computing the vector $\phi[\rblambda](\rx;t)$ of positions at time $t$ in $\grandO(n)$ elementary operations, without following the detailed trajectory of each particle on the time interval $[0,t]$. 

Let us define the free transport flow $\psi[\rblambda](\rx;t) \in \R^n$ by, for all $k \in \{1, \ldots, n\}$,
\begin{equation*}
  \psi_k[\rblambda](\rx;t) := x_k + t \rblambda_k,
\end{equation*}
and introduce the functions $P_t$ and $Q_t$ on $[0,1]$ such that $P_t(0)=Q_t(0)=0$ and, for all $k \in \{1, \ldots, n\}$,
\begin{equation*}
  P_t(k/n) := \sum_{k'=1}^k \phi_{k'}[\rblambda](\rx;t), \qquad Q_t(k/n) := \sum_{k'=1}^k \psi_{k'}[\rblambda](\rx;t), 
\end{equation*}
with linear interpolation on $[(k-1)/n, k/n]$. Brenier and Grenier pointed out that $P_t$ is the convex hull of $Q_t$. Thus, our algorithm consists in computing the vector $\{Q_t(k/n), k = 1, \ldots, n\}$ first, which obviously requires $\grandO(n)$ operations, and then deducing $P_t$ from the algorithm described by the pseudo-code below, which follows Andrew's monotone chain algorithm~\cite{Andr79}, based on the Graham scan~\cite{Grah72}. \revision{Note that the parallelisation of the computation of the vector $\{Q_t(k/n), k = 1, \ldots, n\}$ is straightforward, while a parallelisable method to determine the convex hull of a sorted point set in the plane was devised in~\cite{Goo87}.}

The input is the vector {\tt Q} of size {\tt n+1}, with elements {\tt Q(k)} = $Q_t(k/n)$ indexed by $k \in \{0, \ldots, n\}$. The algorithm constructs a list {\tt L} of integers {\tt k} (ranked in decreasing order) containing the successive points at which $P_t(k/n) = Q_t(k/n)$. Once {\tt L} is computed, $P_t$ is reconstructed by linear interpolation, which finally yields $\phi[\rblambda](\rx;t)$. The length of the list {\tt L} is denoted by {\tt |L|} and its elements are indexed starting from {\tt 1}. It is assumed that its first and second elements can be accessed and removed in constant time.
\begin{verbatim}
  initialise L = [1,0]
  for k = 2 to n
    while |L| > 1 and (Q(k)-Q(L(1)))/(k-L(1)) < (Q(L(1))-Q(L(2)))/(L(1)-L(2))
      remove the first element from L
    end while
    insert k at the beginning of L
  end for  
\end{verbatim}
It is easily checked, by induction on $k \in \{1, \ldots, n\}$, that after the $(k-1)$-th iteration of the {\tt for} loop, {\tt L} contains the indices of the points at which $Q_t$ coincides with the convex hull of $\{Q_t(k'/n), k'=0, \ldots, k\}$. Indeed, the {\tt while} loop consists in removing from {\tt L} the points {\tt L(1)} at which the piecewise linear function interpolating between the values of $Q_t$ at the points {\tt L(2)}, {\tt L(1)} and {\tt k} is concave, see Figure~\ref{fig:algoL}.
\begin{figure}[ht]
  \includegraphics[width=3.5cm]{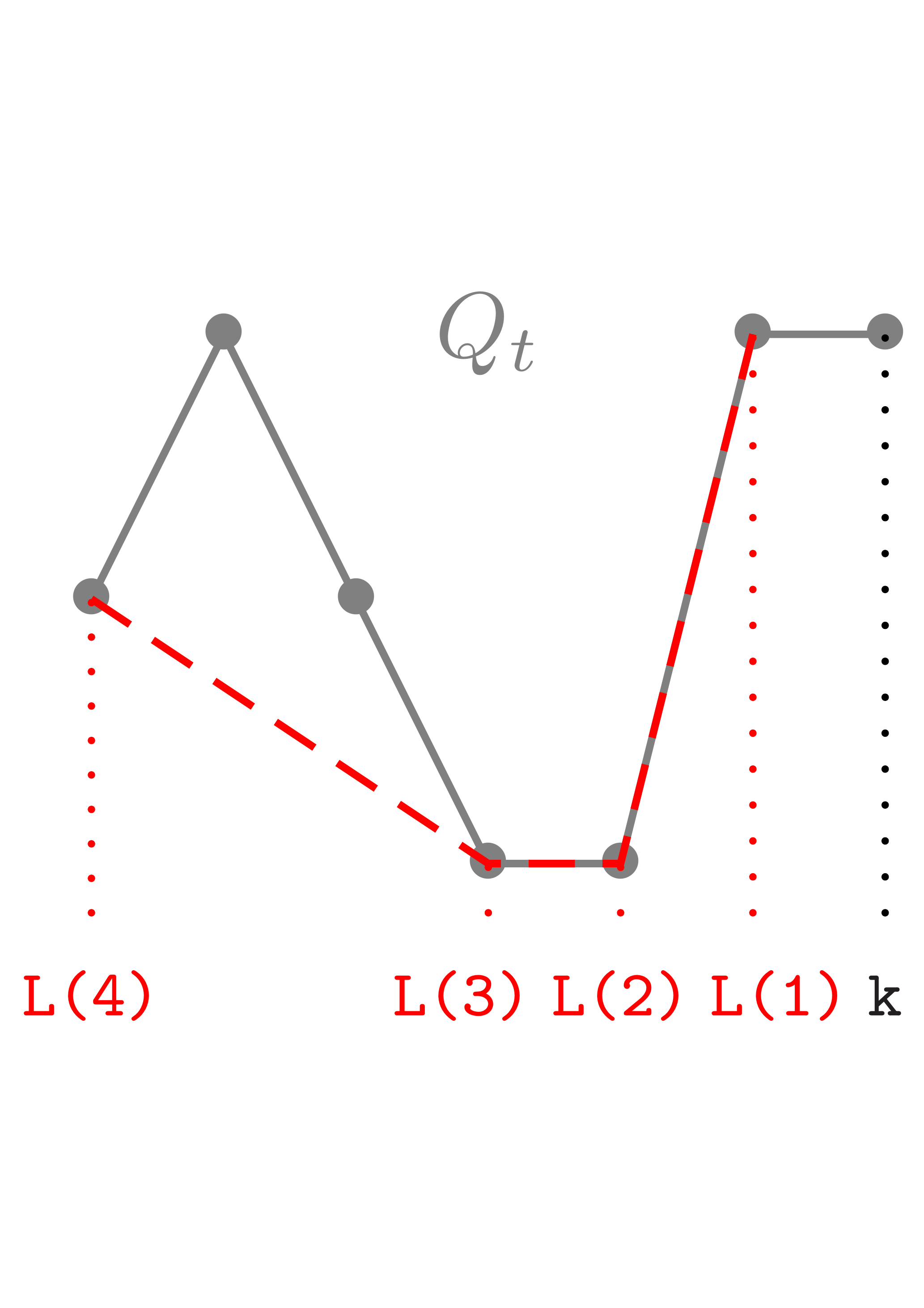}\hskip 2cm\includegraphics[width=3.5cm]{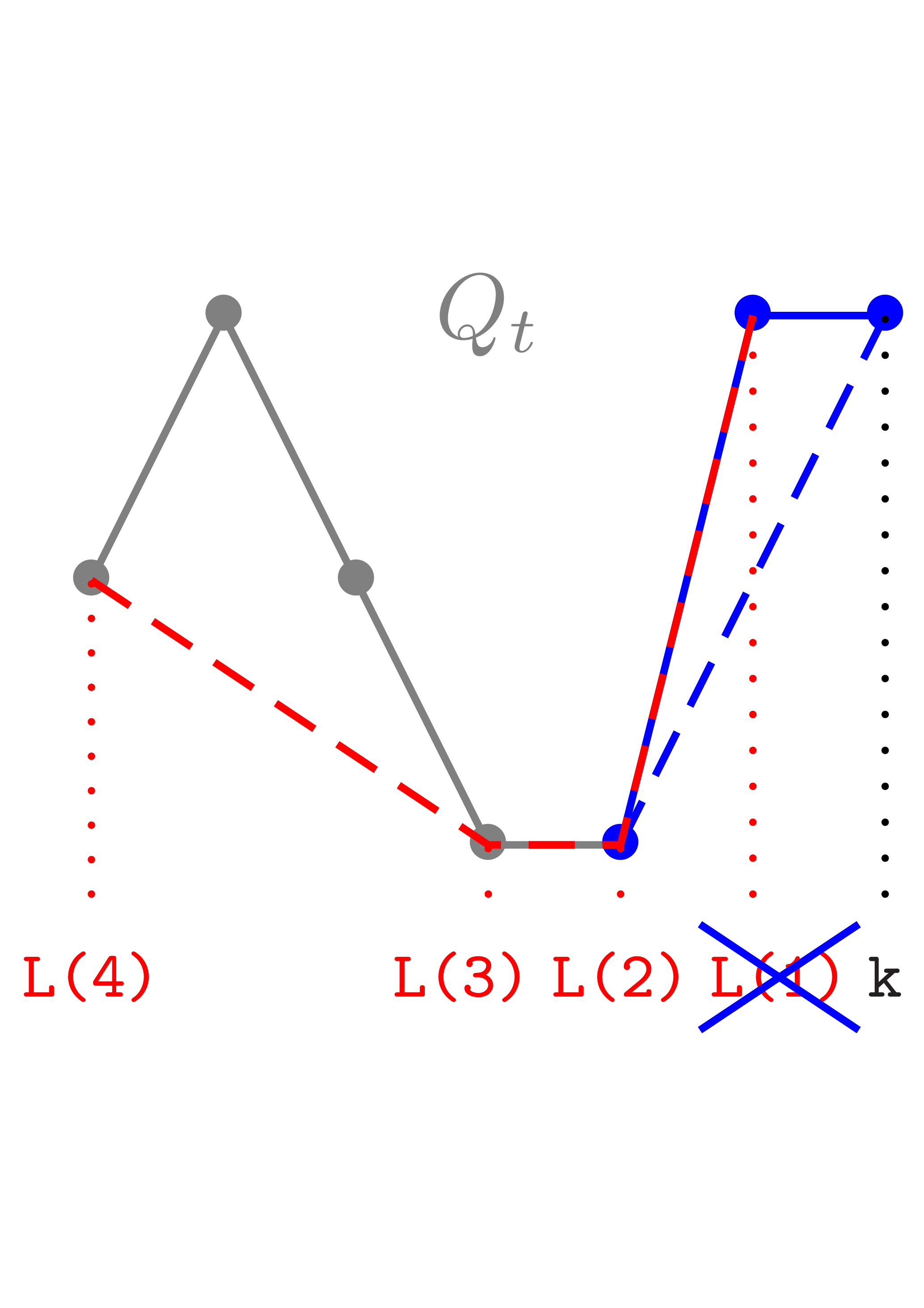}\hskip 2cm\includegraphics[width=3.5cm]{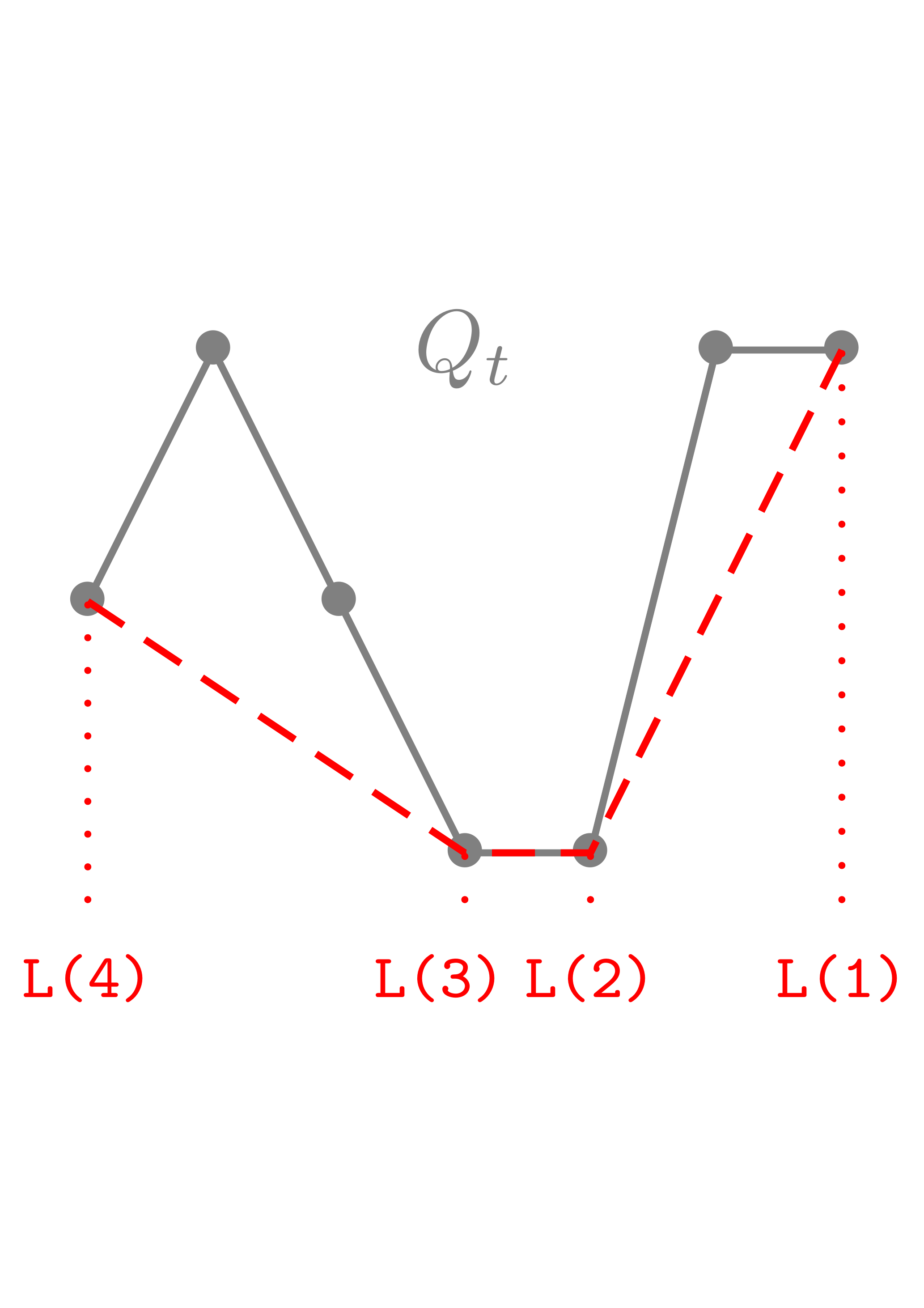}
  \caption{An iteration of the algorithm computing {\tt L}. The left-hand figure displays the composition of {\tt L} at the beginning of the {\tt (k-1)}-th iteration. On the central figure, the point {\tt k-1} is removed from {\tt L}. The right-hand figure displays the composition of {\tt L} at the end of the {\tt k}-th iteration.}
  \label{fig:algoL}
\end{figure}

From the fact that the list {\tt L} is browsed at each iteration, one could think that the algorithm uses $\grandO(n^2)$ operations. This is actually not the case, as elements of {\tt L} for which the test in the {\tt while} loop returns {\tt true} are removed from {\tt L} and will therefore not appear again in the next iterations. As a consequence, the actual complexity of this algorithm is $\grandO(n)$.

Notice that an algorithm computing the explicit space-time points of collisions in the SPD would also take $\grandO(n)$ operations, as there are at most $n-1$ such points. As a consequence, the method presented here has the same computational efficiency as the explicit simulation of the SPD. However, the Brenier-Grenier trick allows for a significative simplification of the implementation. 

\subsubsection{Burgers equation with two-atom initial measure}\label{sss:Burgers} We consider the Burgers equation~\eqref{eq:Burgers} with the CDF of the two-atom measure~\eqref{eq:twoatom} as initial datum. In the SPD, two fans of particles are created, respectively originating from the points $-1$ and $1$, see Figure~\ref{fig:Burgers} (a). These fans correspond to the fact that the entropy solution is the superposition of two rarefaction waves, respectively located at time $t$ on $[-1,-1+t/2]$ and $[1+t/2,1+t]$, see Figures~\ref{fig:Burgers} (b) and (c). 

The $\Ls^1$ error between the particle approximation and the solution is plotted as a function of $n$, for several given terminal times $t$, on Figure~\ref{fig:Burgers} (d). In accordance with Proposition~\ref{prop:rateSPD} and Remark~\ref{rk:rateSPD}, and since there is no discretisation error of the initial condition here (for even $n$), it is observed that this error is of the order of magnitude $1/n$.

\begin{figure}[ht]
  \includegraphics[clip=true,trim=.5cm 0cm 1cm 1cm,width=7.5cm]{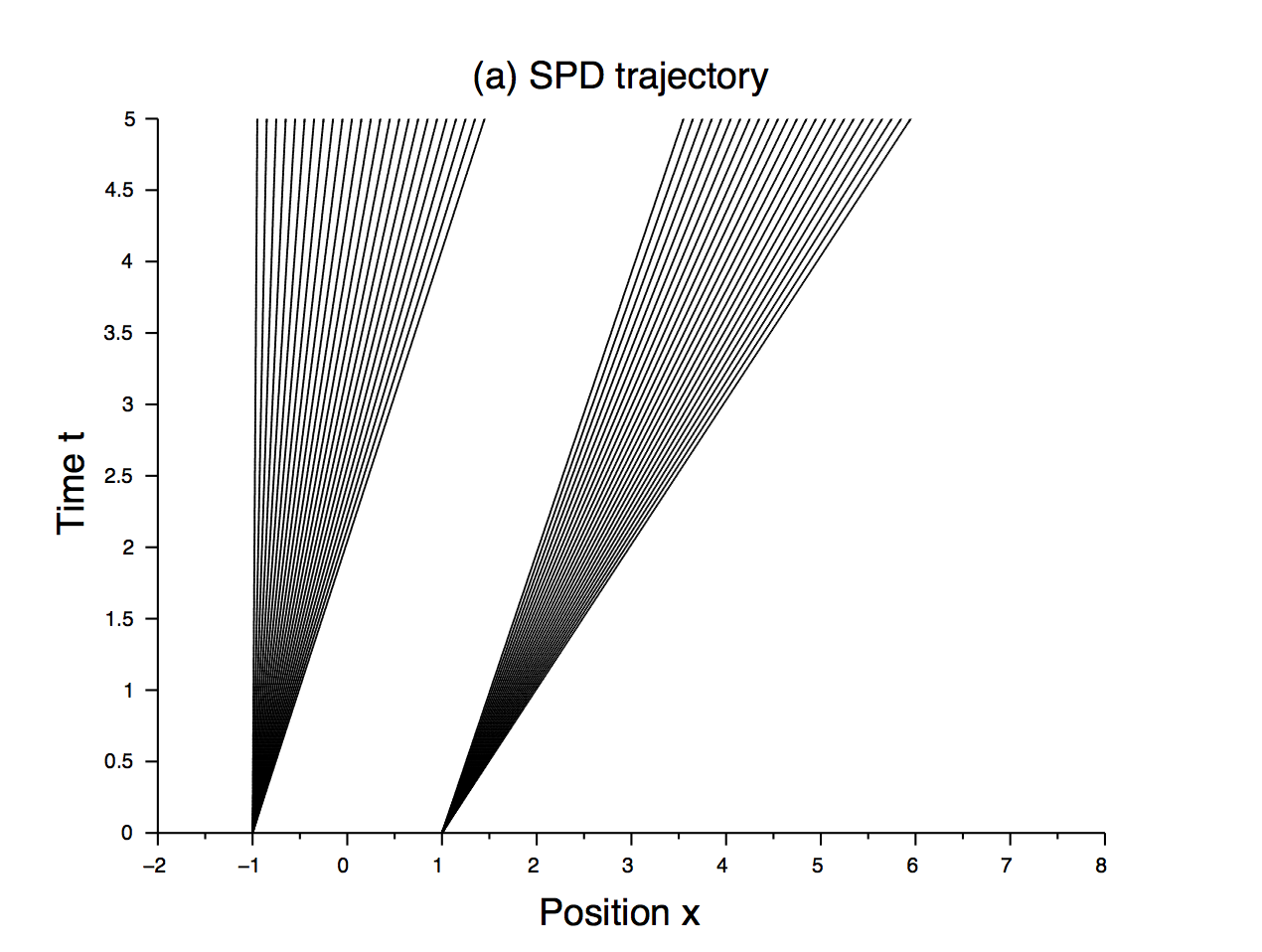}\includegraphics[clip=true,trim=.5cm 0cm 1cm 1cm,width=7.5cm]{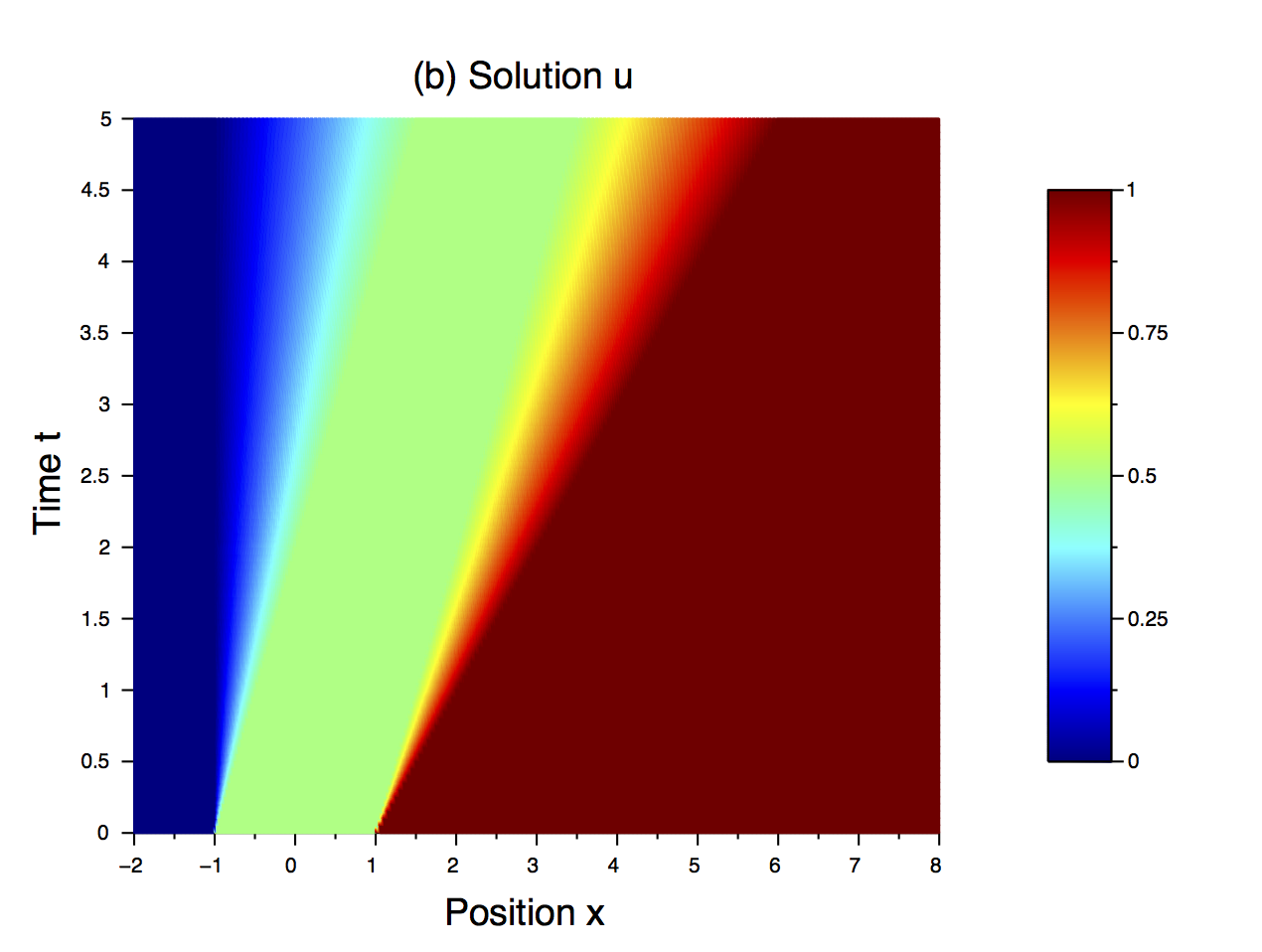}
  
  \includegraphics[clip=true,trim=.5cm 0cm 1cm 1cm,width=7.5cm]{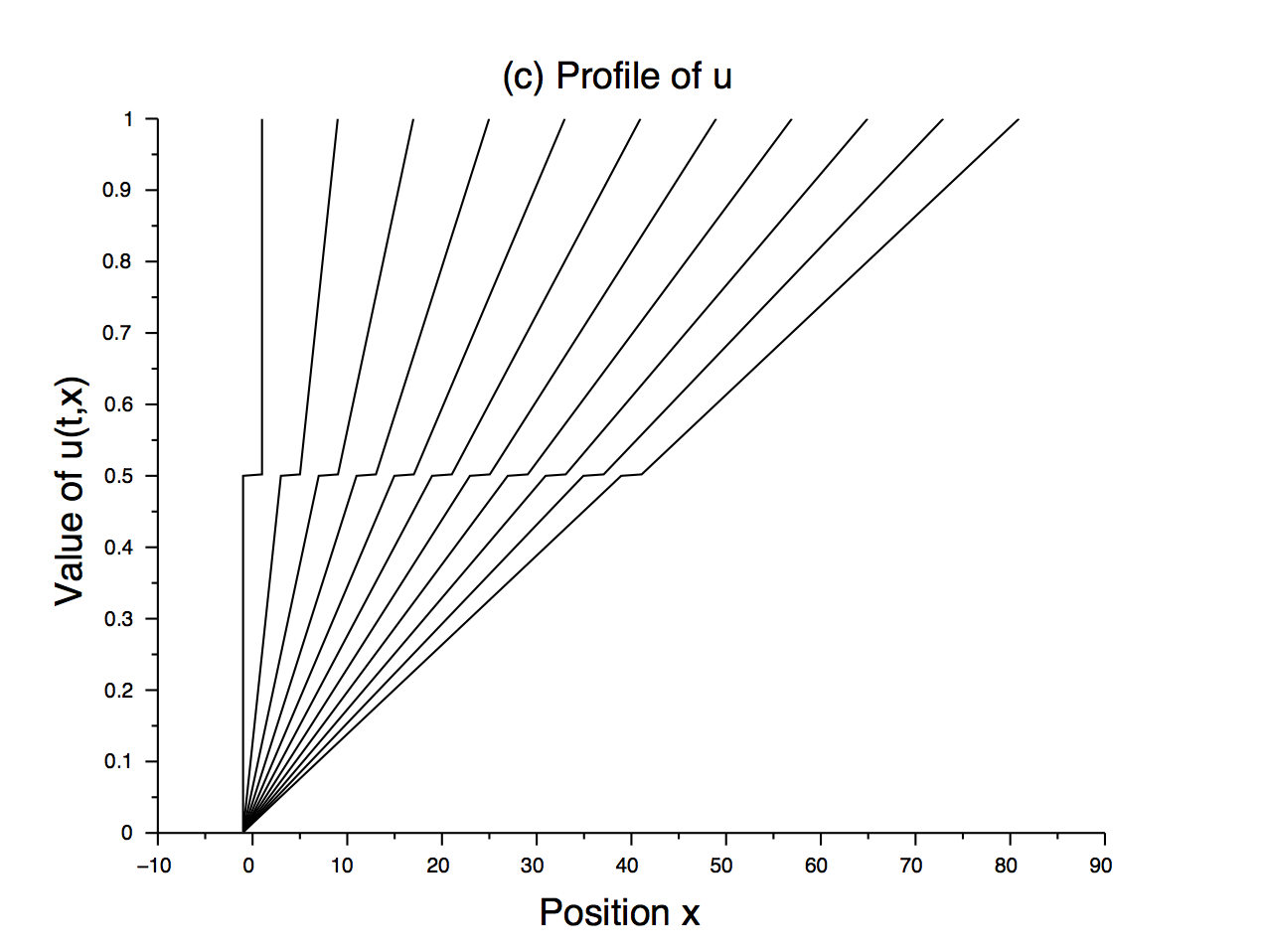}\includegraphics[clip=true,trim=.5cm 0cm 1cm 1cm,width=7.5cm]{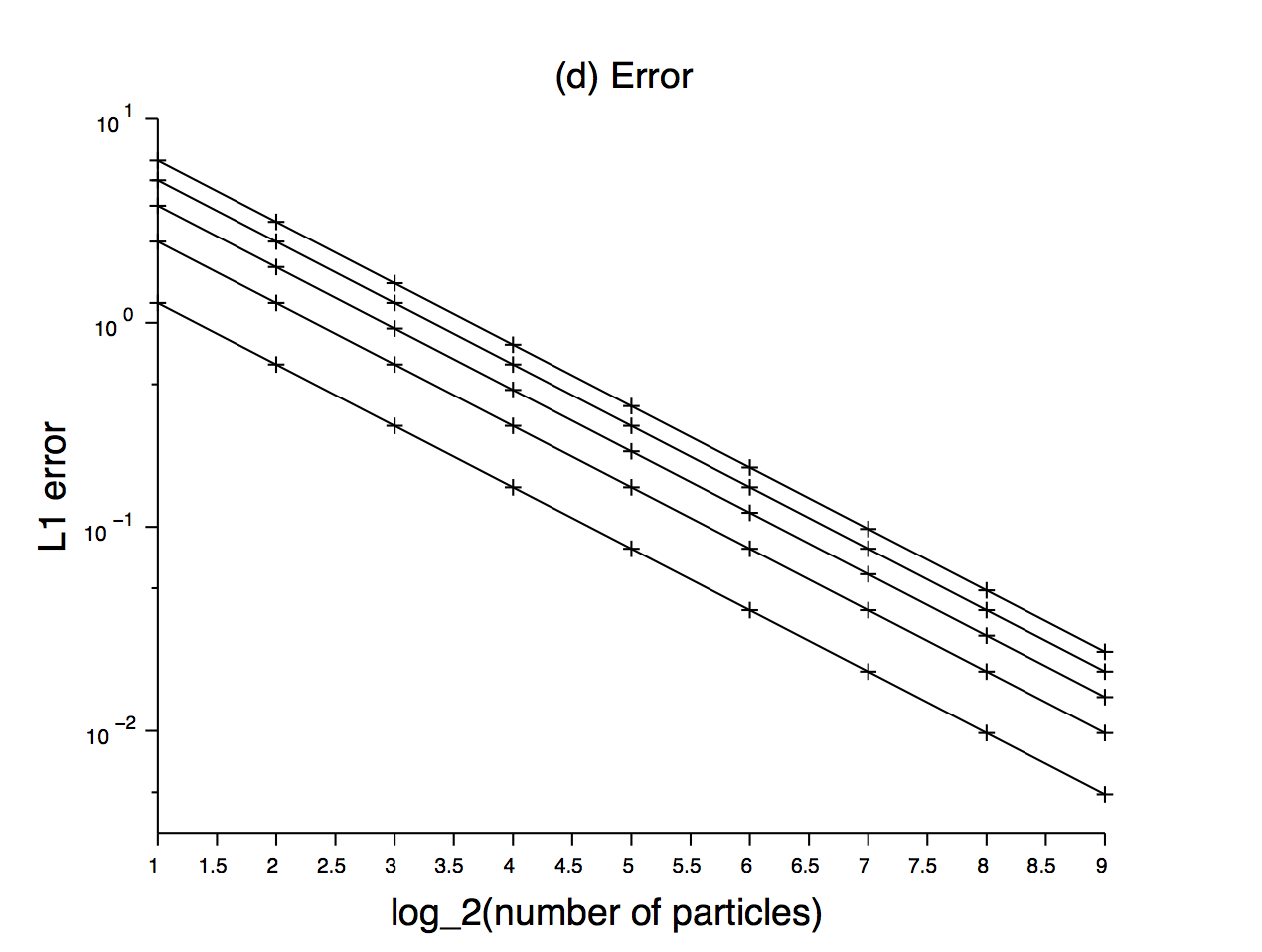}
  \caption{Numerical results for the Burgers equation with two-atom initial measure: (a) trajectories of 50 particles in the space-time plane, (b) value of the solution $u(t,x)$ in the space-time plane, (c) profile of the solution $u(t,x)$ at successive times $t=0, 8, 16, \ldots, 80$ and (d) logarithmic plot of the $\Ls^1$ error between the approximation obtained with $2^p$ particles and the solution, as a function of $p = 1, \ldots, 9$. The different lines correspond to different values of $t$, namely $t=10, 20, \ldots, 50$, with the higher curves corresponding to the larger times. The slope of each line is $- 0.693 \simeq -\log 2$, which expresses the order $1/n$ of the error.}
  \label{fig:Burgers}
\end{figure}

\subsubsection{Concave flux function}\label{sss:concave} We now consider the conservation law with concave flux function~\eqref{eq:concave} and the CDF of the two-sided exponential measure~\eqref{eq:biexpo} as initial datum. As is made clear on Figure~\ref{fig:concave} (a), the particles progressively aggregate at $0$. It results in the formation of a shock wave in the solution, see Figures~\ref{fig:concave} (b) and (c). The $\Ls^1$ error is displayed on Figure~\ref{fig:concave} (d) and exhibits the following behaviour: given $t \geq 0$, there exists a critical number of particles such that:
\begin{itemize}
  \item below this number, the error does not vary with $n$,
  \item above this number, the error decreases when $n$ increases at the same rate as for the discretisation of the initial measure.
\end{itemize}
This behaviour is explained by the fact that, for $n$ small, all the particles have arrived at $0$ at time $t$, so that the approximate solution is the Heaviside function whatever $n$. But as soon as $n$ is large enough to allow some particles to have an initial position far enough from $0$ so that they have not reached $0$ at time $t$ yet, then the contribution of these particles in the approximate solution allows the latter to fit better the part of $u$ outside of the shock wave, at the same rate as for the initial discretisation since the shape of $u$ outside of the shock wave is merely an affine transformation of the initial profile. Following the conclusions of Section~\ref{s:ci}, this discretisation error is of the order $\log(n)/n$.

Of course, the larger $t$ is, the larger the magnitude of the shock wave is, therefore the better the Heaviside function approximates $u$ and the more particles it takes to reach the critical number, which explains the ordering of the different curves on the picture.

\begin{figure}[ht]
  \includegraphics[clip=true,trim=.5cm 0cm 1cm 1cm,width=7.5cm]{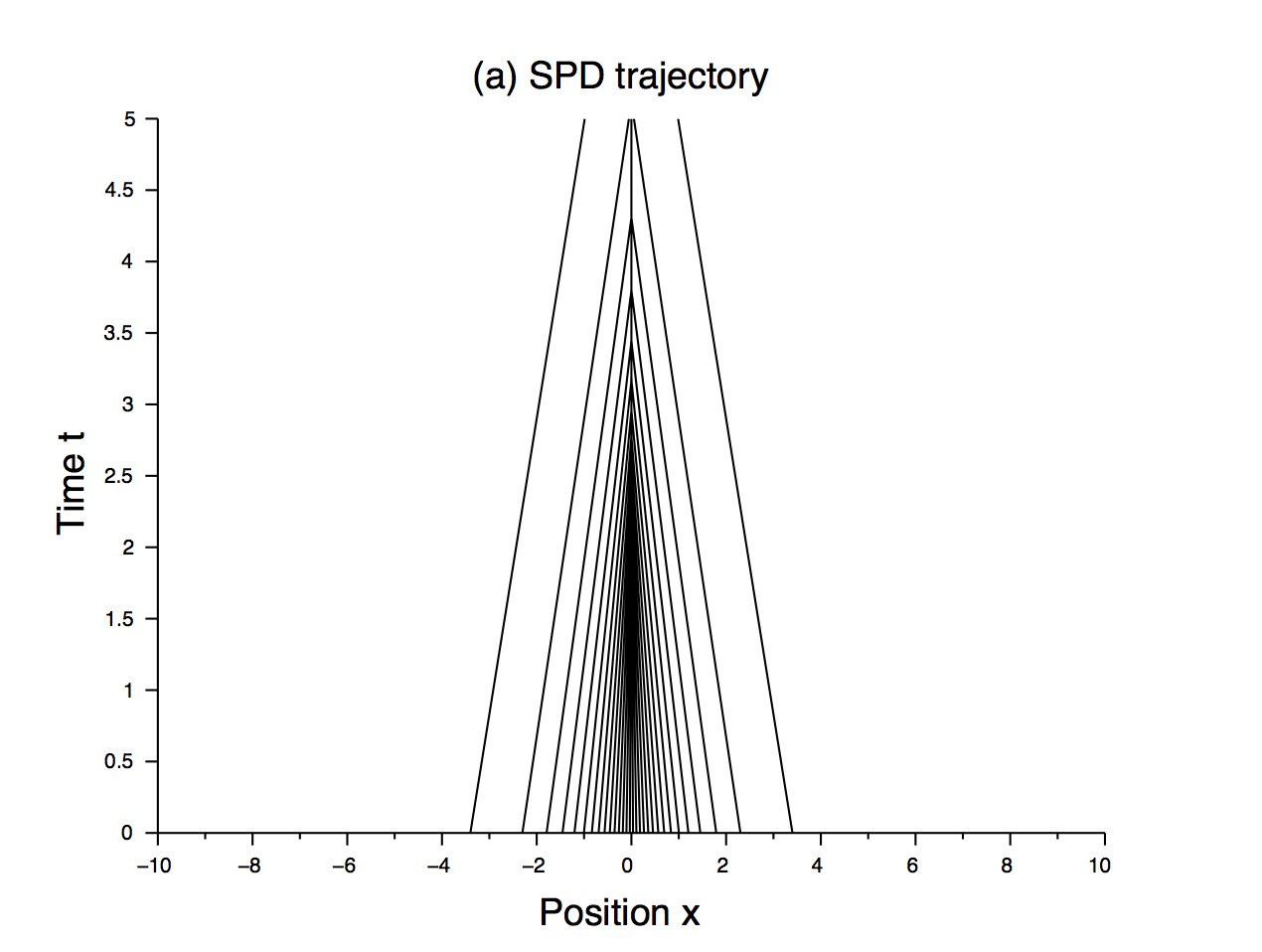}\includegraphics[clip=true,trim=.5cm 0cm 1cm 1cm,width=7.5cm]{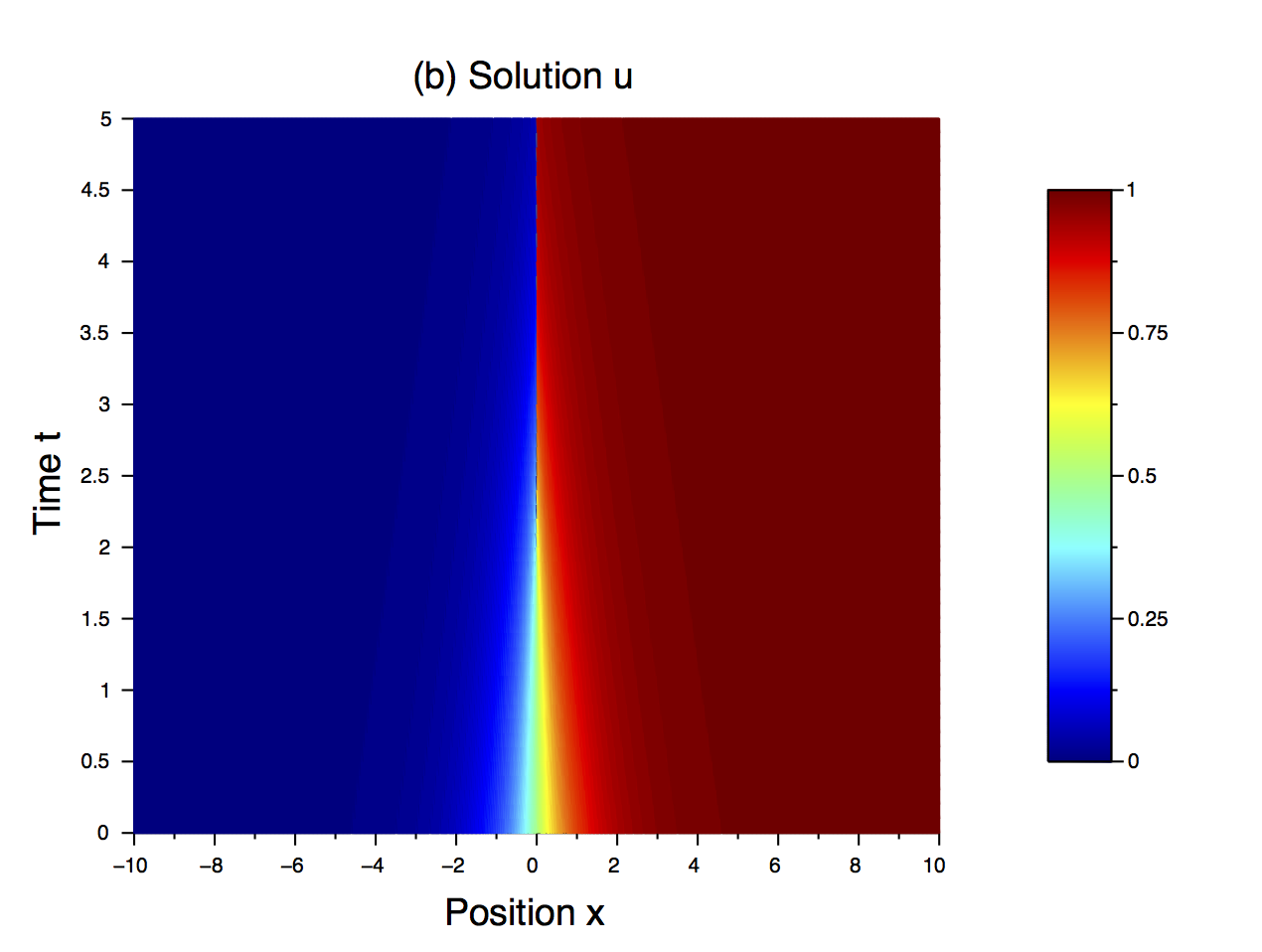}
  
  \includegraphics[clip=true,trim=.5cm 0cm 1cm 1cm,width=7.5cm]{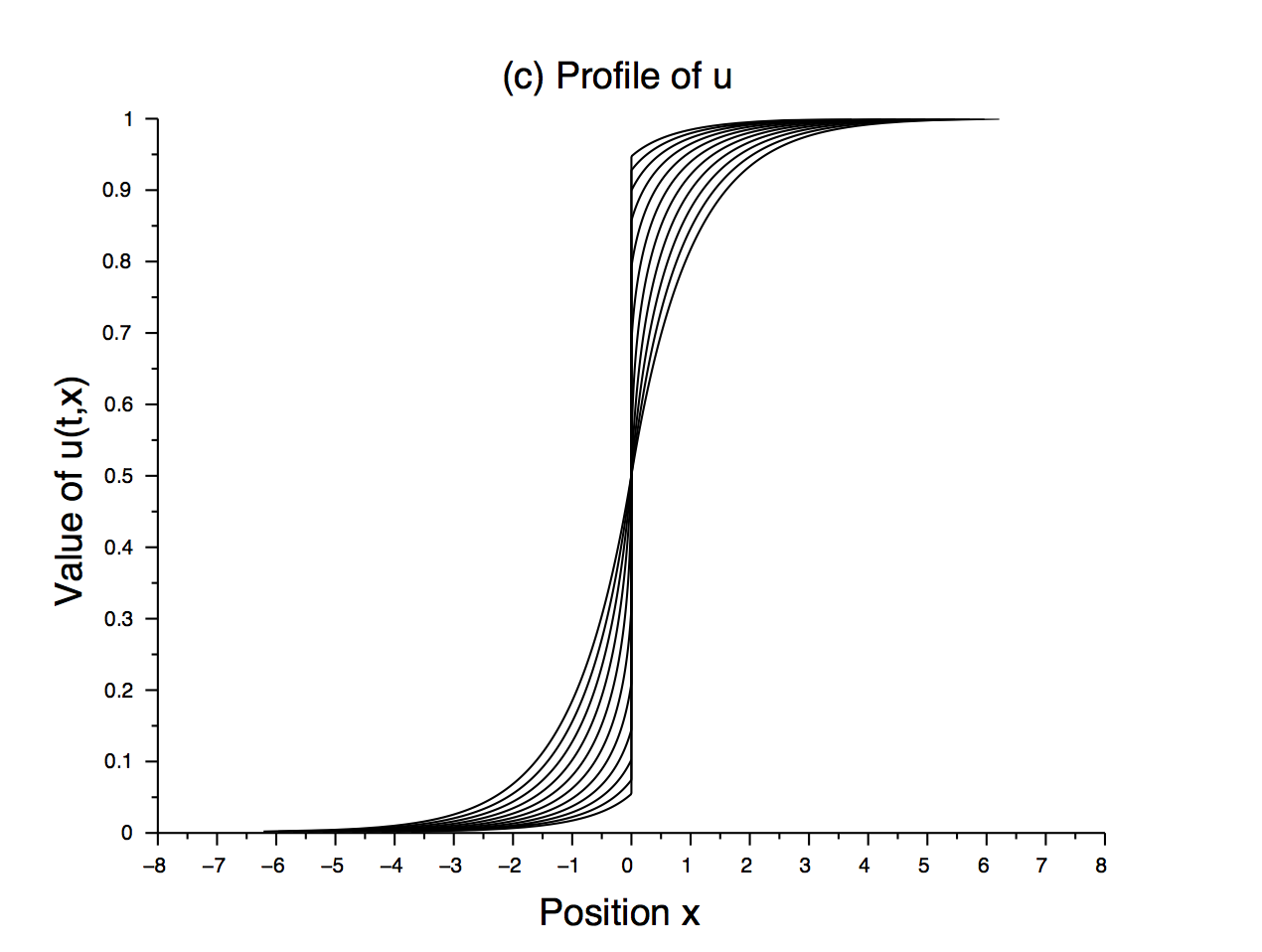}\includegraphics[clip=true,trim=.5cm 0cm 1cm 1cm,width=7.5cm]{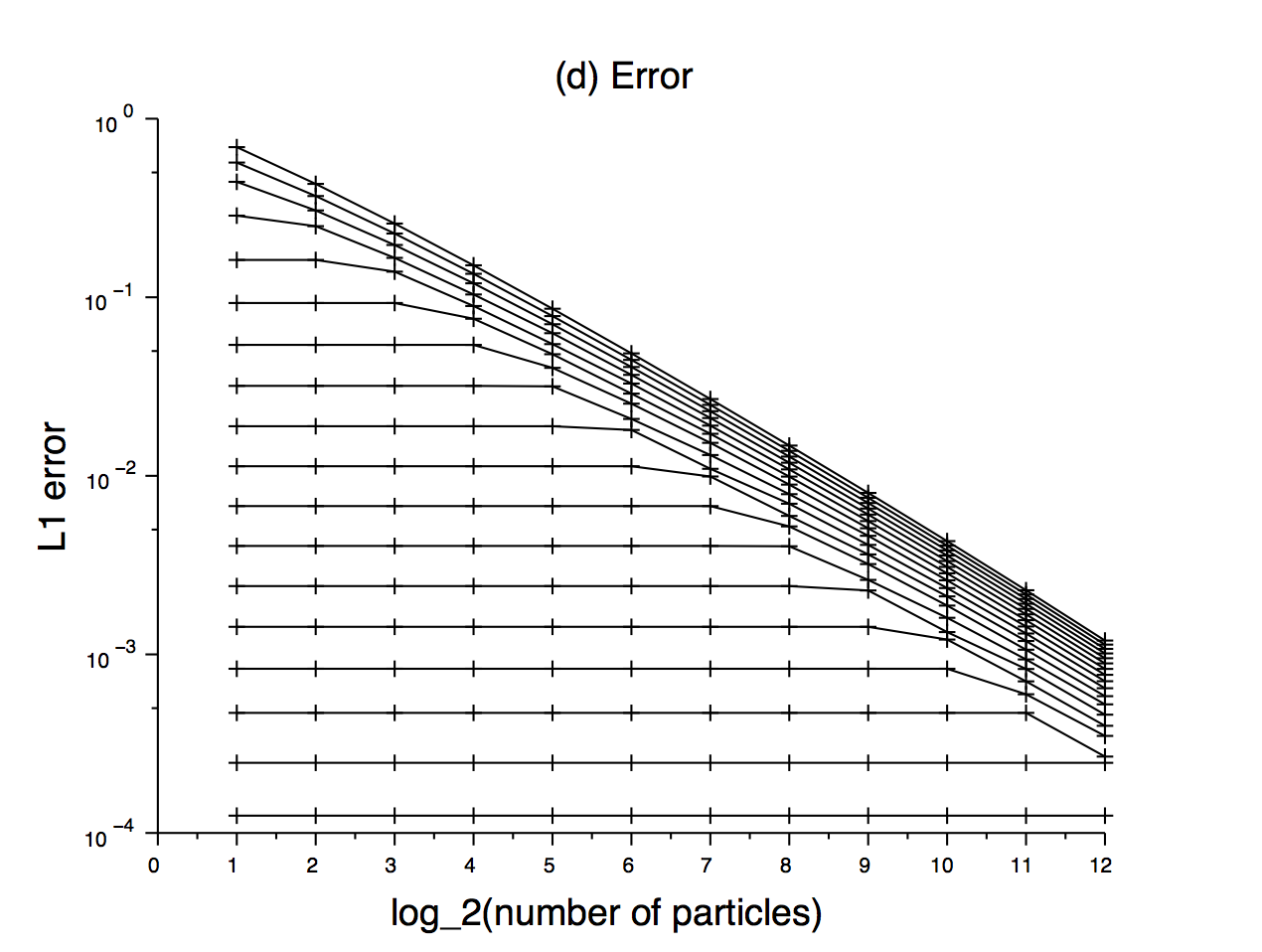}
  \caption{Numerical results for a concave flux function with two-sided exponential initial measure: (a) trajectories of 30 particles in the space-time plane, (b) value of the solution $u(t,x)$ in the space-time plane, (c) profile of the solution $u(t,x)$ at successive times $t=0, 0.5, 1, \ldots, 5$, (d) logarithmic plot of the $\Ls^1$ error between the approximation obtained with $2^p$ particles and the solution, as a function of $p = 1, \ldots, 12$. The different lines correspond to different values of $t$, namely $t=0, 1, \ldots, 17$, and the higher curves correspond to the smaller times.}
  \label{fig:concave}
\end{figure}


\subsection{Diagonal hyperbolic systems} We now turn to the numerical resolution of the diagonal hyperbolic system~\eqref{eq:syst} thanks to the MSPD. 

A first method to simulate the trajectory of the MSPD obviously consists in computing the exact space-time position of each collision (between particles of the same type, or between particles of different types). The number of such collisions is at most of order $n^2$: indeed, because of Assumption~\eqref{ass:USH}, there are at most $n^2d(d-1)/2$ collisions between particles of different types, and whenever particles of the same type collide, the space-time point of the next collision with a particle of another type is the same for all the particles in the current cluster. As a consequence, an algorithm computing the exact trajectory of each particle is expected to perform $\grandO(n^2)$ elementary operations. We however believe that such an algorithm with optimal complexity would require a rather technical implementation.

We therefore suggest to use a second method, which consists in approximating the MSPD by the iterated TSPD scheme described in Subsection~\ref{ss:approxMSPD}. Thanks to the Brenier-Grenier algorithm introduced in the scalar case, each iteration of the SPD is easily implemented and requires $\grandO(n)$ elementary operations. Then we shall show below that updating the velocities after each step also requires $\grandO(n)$ operations. As a consequence, computing the iterated TSPD on $L$ steps requires $\grandO(nL)$ elementary operations. On the other hand, the error between the solution to the system~\eqref{eq:syst} and the approximated solution provided by the iterated TSPD scheme was proved in Theorem~\ref{theo:rateMSPD} to be of order $\grandO(t/n+\Delta)$ at time $t$. For the terms $t/n$ and $\Delta$ to be of the same order, one therefore has to run the iterated TSPD scheme on $L \simeq t/\Delta \simeq n$ iterations, which leads to a total number of elementary operations in $\grandO(n^2)$. As a conclusion, this method has the same cost as the exact simulation of the MSPD, but it seems easier to implement.

\revision{It follows from this discussion that to reach an approximation error of order $\epsilon$ at time $t \geq 0$, the iterated TSPD scheme requires $\grandO(t^2/\epsilon^2)$ elementary operations. In comparison, standard upwind schemes for the hyperbolic system~\eqref{eq:syst}, with a time step $\Delta t$ and a mesh size $\Delta x$ satisfying the CFL condition $\ConstBoundS\Delta t \leq \Delta x$, are generally expected first-order accurate~\cite{Lev02}, so that the approximation error at time $t$ writes $C(t) \Delta x$, with a constant $C(t)$ that depends neither on $\Delta t$ nor on $\Delta x$, and grows at least linearly with $t$. Besides, at each iteration of such a scheme, $\grandO(1/\Delta x)$ elementary operations are necessary to compute the values of the fluxes and of the solution on the grid, so that after $L \simeq t/\Delta t$ iterations, $\grandO(t/(\Delta t \Delta x))$ elementary operations have been performed. As a consequence, the minimal number of elementary operations to reach a precision of order $\epsilon$ at time $t$ is obtained when the CFL condition is saturated, and it is in $\grandO(t C(t)^2/\epsilon^2)$, which has the same dependence on $\epsilon$ as the iterated TSPD scheme.}

\subsubsection{Description of the algorithm} In order to simulate the MSPD, we use the approximation of the latter by the TSPD on small time steps $\Delta$, as is described in Subsection~\ref{ss:approxMSPD}. Given $\x \in D_n^d$, we thus compute $\tPhi_{\Delta}^L(\x)$ instead of $\Phi(\x;L\Delta)$. To this aim, we use an elementary iterative algorithm which will therefore perform $L$ steps. At each iteration $\ell \in \{1, \ldots, L\}$, it is necessary to:
\begin{enumerate}[label=(\roman*), ref=\roman*]
  \item compute the vector of velocities for the TSPD started from $\tPhi^{\ell-1}_{\Delta}(\x)$,
  \item compute the evolution of each subsystem of particles according to the TSPD.
\end{enumerate}
Of course, the second step uses the algorithm described in Subsection~\ref{ss:num:spd} and therefore makes $\grandO(nd)$ elementary operations. The first step is realised by the following pseudo-code, the input of which is an array {\tt x} of size $d \times n$, such that {\tt x(gamma,k)} contains the initial position $x^{\gamma}_k$ of the particle $\gamma:k$. We recall that, for fixed $\gamma$, $x_k^{\gamma} \leq x_{k+1}^{\gamma}$ for all $k \in \{1, \ldots, n-1\}$.

\begin{verbatim}
  current_indices = vector [0, ..., 0] of size d 
  while min(current_indices) < n
    gamma = max( argmin( x(g,current_indices(g)) ; 
                         g such that current_indices(g)<n ) )
    k = current_indices(gamma)
    current_indices(gamma) = k+1
    velocity(gamma,k) = lambda(gamma,current_indices)
  end while
\end{verbatim}

In this pseudo-code, {\tt lambda(gamma,[k\_1, ..., k\_d])} returns the velocity 
\begin{equation*}
  n \int_{w=(k_{\gamma}-1)/n}^{k_{\gamma}/n} \lambda^{\gamma}\left(\frac{k_1}{n}, \ldots, \frac{k_{\gamma-1}}{n}, w, \frac{k_{\gamma+1}}{n}, \ldots, \frac{k_d}{n}\right)\dd w
\end{equation*}
of the particle $\gamma:k$, so that at the end of the algorithm, the vector {\tt velocity(gamma,:)} contains the initial velocities of the particles of type {\tt gamma} for the SPD.

There are $nd$ iterations of the {\tt while} loop, and to select {\tt gamma} it is necessary to scan the vector {\tt current\_indices}, which costs $d$ operations. As a consequence, the computation of the velocities requires $\grandO(nd^2)$ operations. Since $d$ is a physical parameter, we only consider the complexity with respect to the numerical parameter $n$ and therefore conclude that the computation of $\tPhi_{\Delta}^L(\x)$ is made in $\grandO(nL)$ operations.

\subsubsection{Case study: $p$-system} The $p$-system 
\begin{equation*}
  \left\{\begin{aligned}
    & \partial_t u = \partial_x v,\\
    & \partial_t v + \partial_x(p(u)) = 0,
  \end{aligned}\right.
\end{equation*}
is a simple model for isentropic gas dynamics in one space dimension, where $u$ is the specific volume of the gas and $v$ is its velocity. The function $p(u)$ determines the pressure in terms of the specific volume, and must generically satisfy $p'(u)<0$ for all $u \geq 0$. \revision{In the sequel, we shall furthermore assume that there exists $\nu>0$ such that
\begin{equation}\label{eq:hypp}
  \int_{u=0}^{\nu} \sqrt{-p'(u)}\dd u = 1,
\end{equation}
which is the appropriate condition to develop our probabilistic approach, and in which case the specific volume will take its values in $[0,\nu]$.}

\revision{Let us define the Riemann invariants $w^-$ and $w^+$ for this $2 \times 2$ system~\cite{serre} by
\begin{equation*}
  w^{\pm} = v \pm g(u),
\end{equation*}
where, for all $u \in [0,\nu]$,
\begin{equation*}
  g(u) := \int_{r=0}^u c(r)\dd r - \frac{1}{2}, \qquad c(u) := \sqrt{-p'(u)}.
\end{equation*}
The assumption~\eqref{eq:hypp} ensures that, if $w^-, w^+ \in [0,1]$, then $(w^+-w^-)/2$ belongs to the image of $g$, and it is immediately checked that $u$ and $v$ are recovered from the formulas
\begin{equation}\label{eq:uv}
  u = g^{-1}\left(\frac{w^+-w^-}{2}\right), \qquad v = \frac{w^++w^-}{2}.
\end{equation}
Furthermore, the Riemann invariants satisfy $\partial_t w^{\pm} = \pm c(u) \partial_x w^{\pm}$, which rewrites under the form of the $2 \times 2$ diagonal system
\begin{equation}\label{eq:systw}
  \left\{\begin{aligned}
    & \partial_t w^- + \lambda^-(w^-,w^+) \partial_x w^- = 0,\\
    & \partial_t w^+ + \lambda^+(w^-,w^+) \partial_x w^+ = 0,
  \end{aligned}\right.  
\end{equation}
with 
\begin{equation*}
  \lambda^{\pm}(w^-,w^+) := \mp c\left(g^{-1}\left(\frac{w^+-w^-}{2}\right)\right) = \mp \frac{1}{(g^{-1})'\left((w^+-w^-)/2\right)}.
\end{equation*}
Under the assumption that $p'$ be continuous and negative on $[0,\nu]$, one can define $\ell \in (0,+\infty)$ by
\begin{equation*}
  \ell := \inf_{0 \leq u \leq \nu} c(u),
\end{equation*}
and get 
\begin{equation}\label{eq:ellUSH}
  \forall w^-, w^+ \in [0,1], \qquad \lambda^+(w^-,w^+) \leq -\ell < \ell \leq \lambda^-(w^-,w^+),
\end{equation}
so that, for initial conditions $w^-_0$ and $w^+_0$ given by the CDFs of probability measures, the system satisfies Assumption~\eqref{ass:USH} with constant $\ConstUSH = 2\ell$.}

\revision{We now present numerical approximations of $u$ and $v$ for the choice of pressure function
\begin{equation*}
  p(u) = \frac{\kappa}{4\nu\argsinh^2(\kappa/2)}\left[\arctan\left(\frac{\kappa}{2}\right)-\arctan\left(\kappa\left(\frac{u}{\nu}-\frac{1}{2}\right)\right)\right], \qquad u \in [0,\nu],
\end{equation*}
where $\nu > 0$ is a given reference specific volume and $\kappa > 0$ is a dimensionless shape parameter. This choice implies
\begin{equation*}
  g(u) = \frac{\argsinh\left(\kappa\left(\frac{u}{\nu}-\frac{1}{2}\right)\right)}{2\argsinh\left(\frac{\kappa}{2}\right)},
\end{equation*}
so that
\begin{equation*}
  \lambda^{\pm}(w^-,w^+) = \mp\frac{\kappa}{2\nu\argsinh(\kappa/2)}\cdot\frac{1}{\cosh\left[(w^+-w^-)\argsinh\left(\frac{\kappa}{2}\right)\right]}.
\end{equation*}
The relation~\eqref{eq:uv} yields
\begin{equation*}
  u = \nu\left(\frac{1}{2} + \frac{1}{\kappa} \sinh\left[(w^+-w^-)\argsinh\left(\frac{\kappa}{2}\right)\right]\right), \qquad v = \frac{w^++w^-}{2}.
\end{equation*}}

We first address the case where the initial conditions $w_0^-$ and $w_0^+$ are the respective CDFs of the shifted two-sided exponential distributions $m^-$ and $m^+$ defined by
\begin{equation*}
  m^{\pm}(\dd x) := \frac{1}{2}\exp(-|x \pm x_0|),
\end{equation*}
for some $x_0 \geq 0$. In this case, $w^+_0(x) \geq w^-_0(x)$ for all $x \in \R$, which implies that $w^+(t,\cdot)-w^-(t,\cdot)$ remains nonnegative at all times $t \geq 0$. This is indeed easily checked at the level of the MSPD, a trajectory of which is plotted on Figure~\ref{fig:mspd:1}: if $w^-_0$ and $w^+_0$ are the empirical CDFs of two vectors $\rx^{\pm} = (x^{\pm}_1, \ldots, x^{\pm}_n) \in D_n$, then $w^+_0 \geq w^-_0$ if and only if, for all $k \in \{1, \ldots, n\}$, $x^+_k \leq x^-_k$. By~\eqref{eq:ellUSH}, for all $t \geq 0$ we have
\begin{equation*}
  \Phi^+_k(\x;t) \leq x^+_k - \ell t, \qquad \Phi^-_k(\x;t) \geq x^-_k + \ell t,
\end{equation*}
with the obvious notation $\x=(\rx^-,\rx^+) \in D_n^2$, so that the corresponding empirical CDF satisfies $w^+(t,\cdot) \geq w^-(t,\cdot)$. That this inequality still holds true in the limit $n \to +\infty$ can be checked using the notion of trajectories introduced in~\cite[Section~5]{jr}, see in particular~\cite[Corollary~5.1.2]{jr}.

One can observe on Figure~\ref{fig:mspd:1} that particles of the same type never collide with each other. This is due to the fact that, for fixed $w^- \in [0,1]$, the mapping $w^+ \mapsto \lambda^+(w^-,w^+)$ is increasing on $[w^-,1]$. As a consequence, two consecutive particles of type $+$ with no particle of type $-$ between them can only have velocities taking them away from each other. The same phenomenon occurs for particles of type $-$. However, collisions between particles of different types modify the velocities of these particles. Thus, the Riemann invariants $w^-$ and $w^+$, respectively plotted on Figures~\ref{fig:w:1} (a) and (b), undergo two interacting rarefaction waves, drifting away from each other on account of Assumption~\eqref{ass:USH}, without forming any shock. The specific volume $u$ and the velocity $v$ are finally plotted on Figures~\ref{fig:w:1} (c) and (d). 

\begin{figure}[ht]
  \includegraphics[clip=true,trim=.5cm 0cm 1cm 1cm,width=12cm]{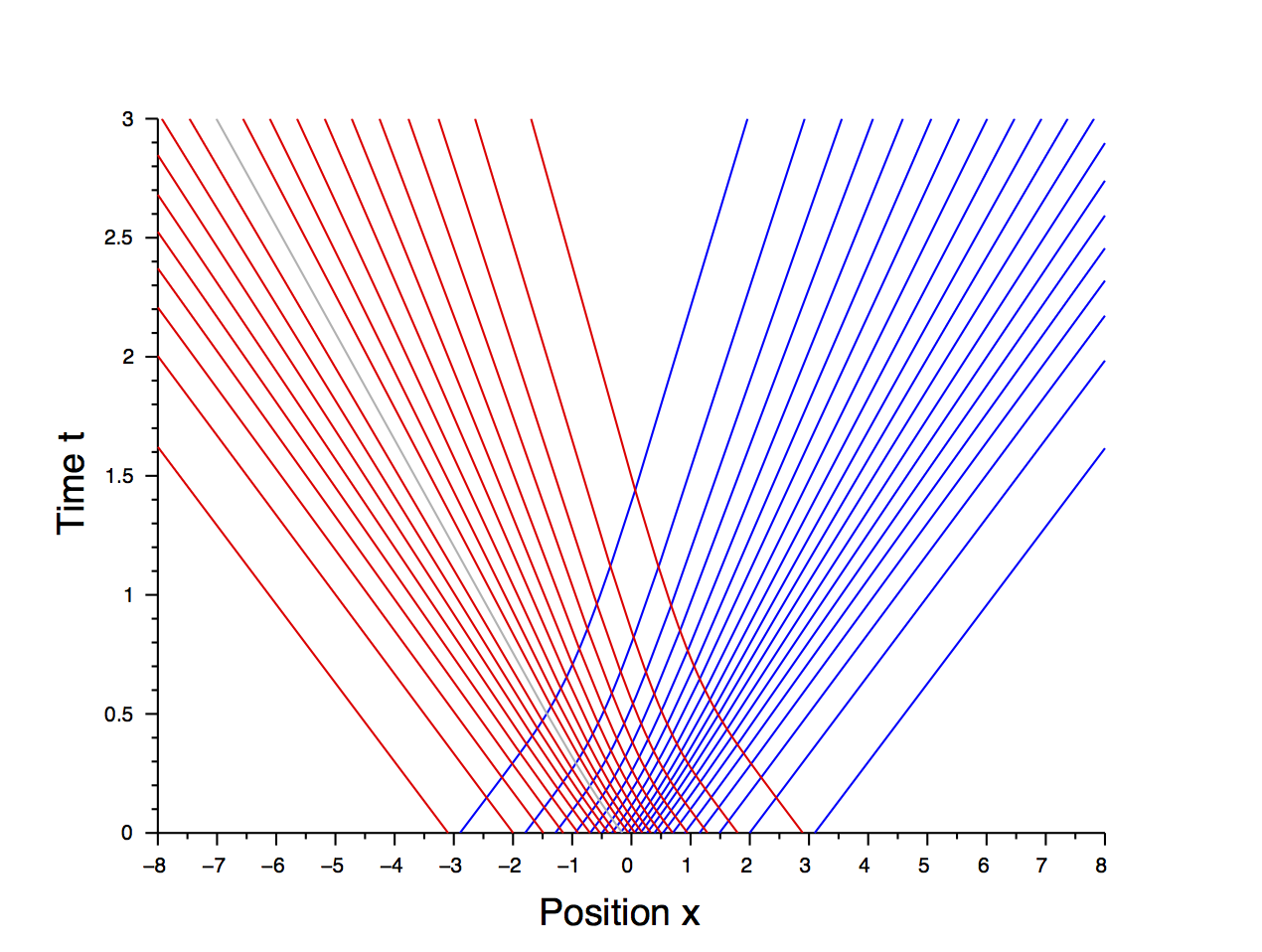}
  \caption{Trajectory of the MSPD (obtained with the iterated TSPD scheme with $\Delta=0.03$) with $20$ particles per type associated with the $p$-system for $w^+_0 \geq w^-_0$. Blue rays correspond to particles of type $-$, red rays correspond to particles of type $+$. Here $x_0=0.1$ and $\nu=0.5$, $\kappa=5$.}
  \label{fig:mspd:1}
\end{figure}

\begin{figure}[ht]
  \includegraphics[clip=true,trim=.5cm 0cm 1cm 1cm,width=7.5cm]{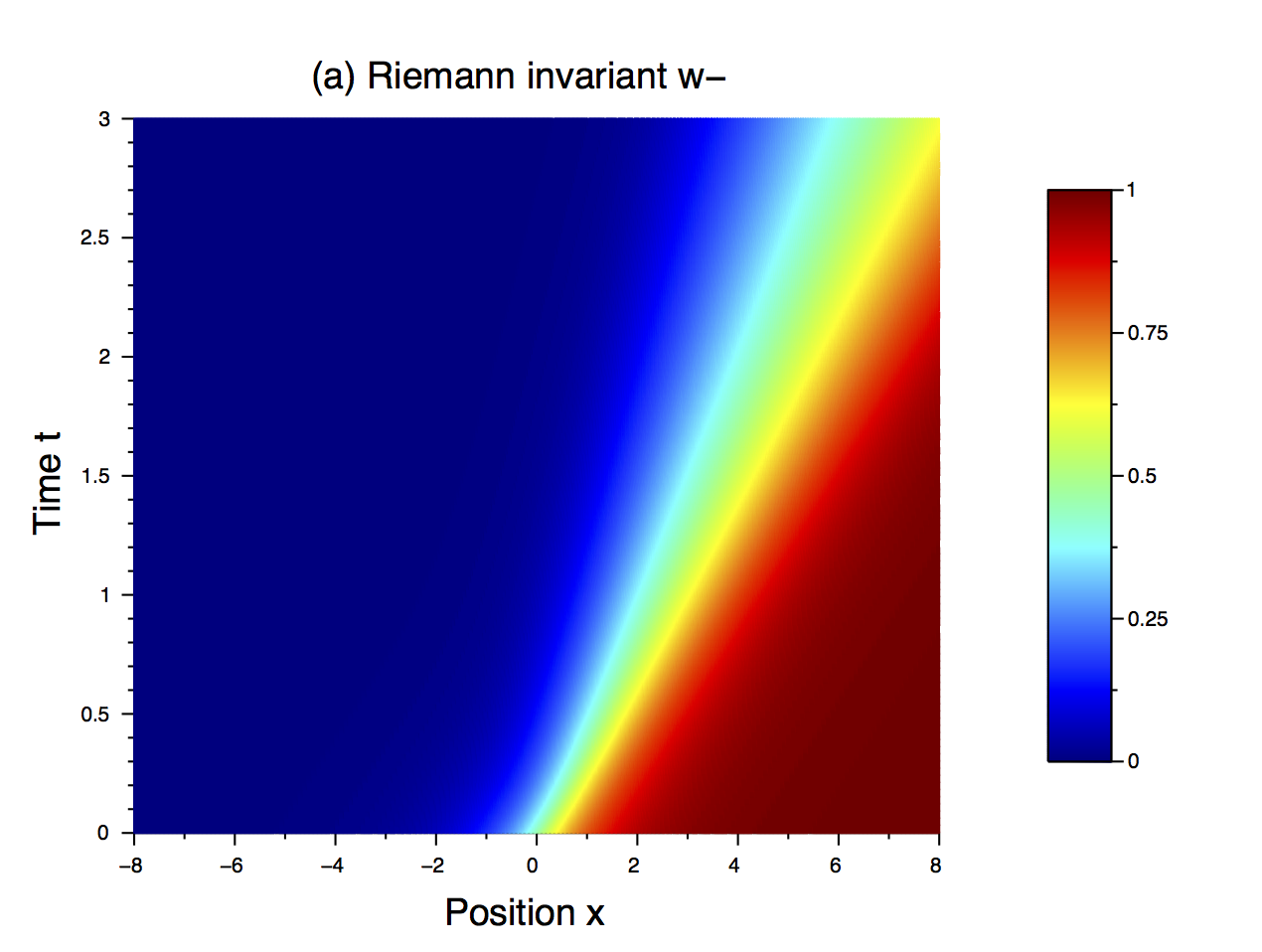}\includegraphics[clip=true,trim=.5cm 0cm 1cm 1cm,width=7.5cm]{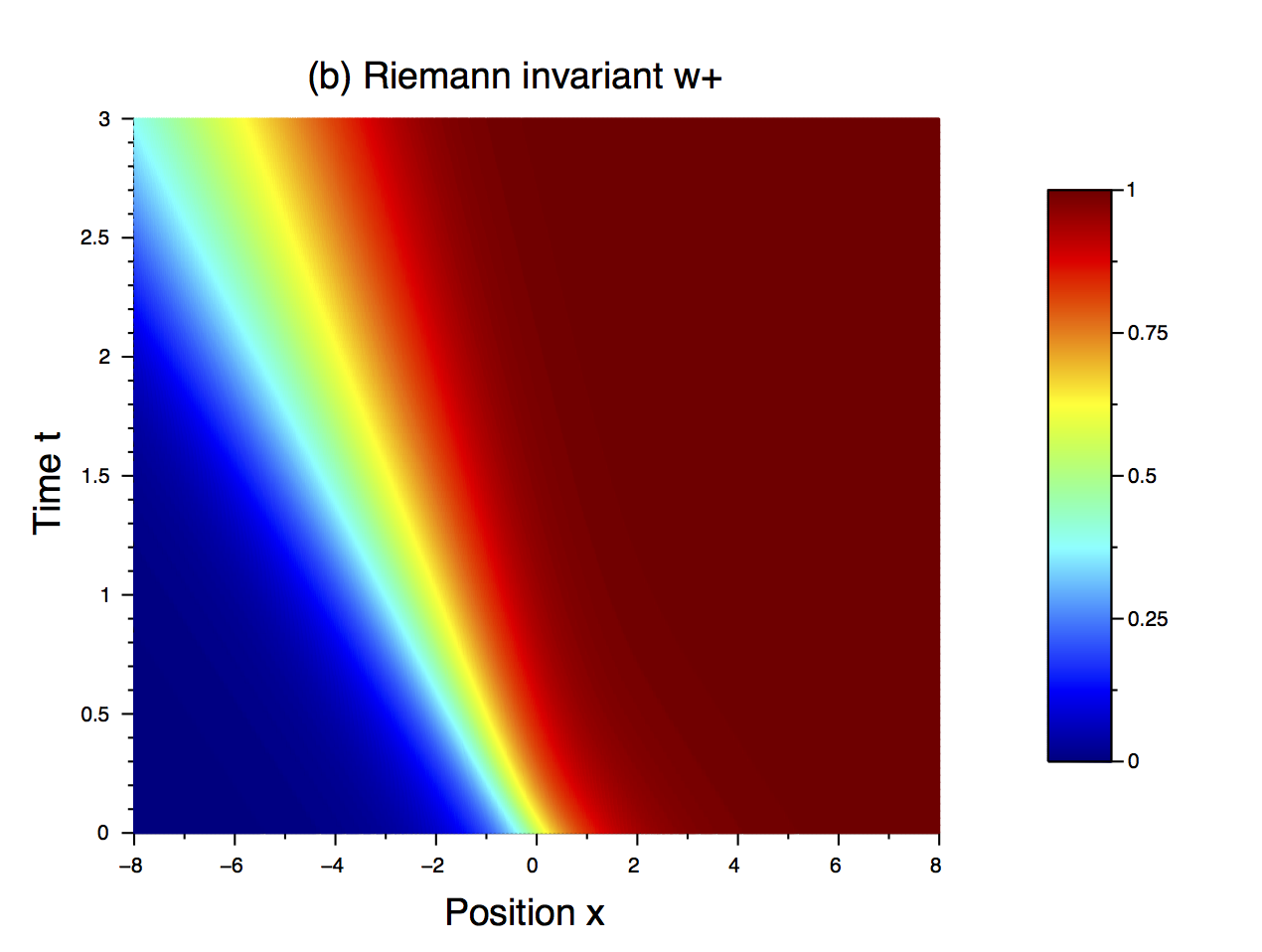}
  
  \includegraphics[clip=true,trim=.5cm 0cm 1cm 1cm,width=7.5cm]{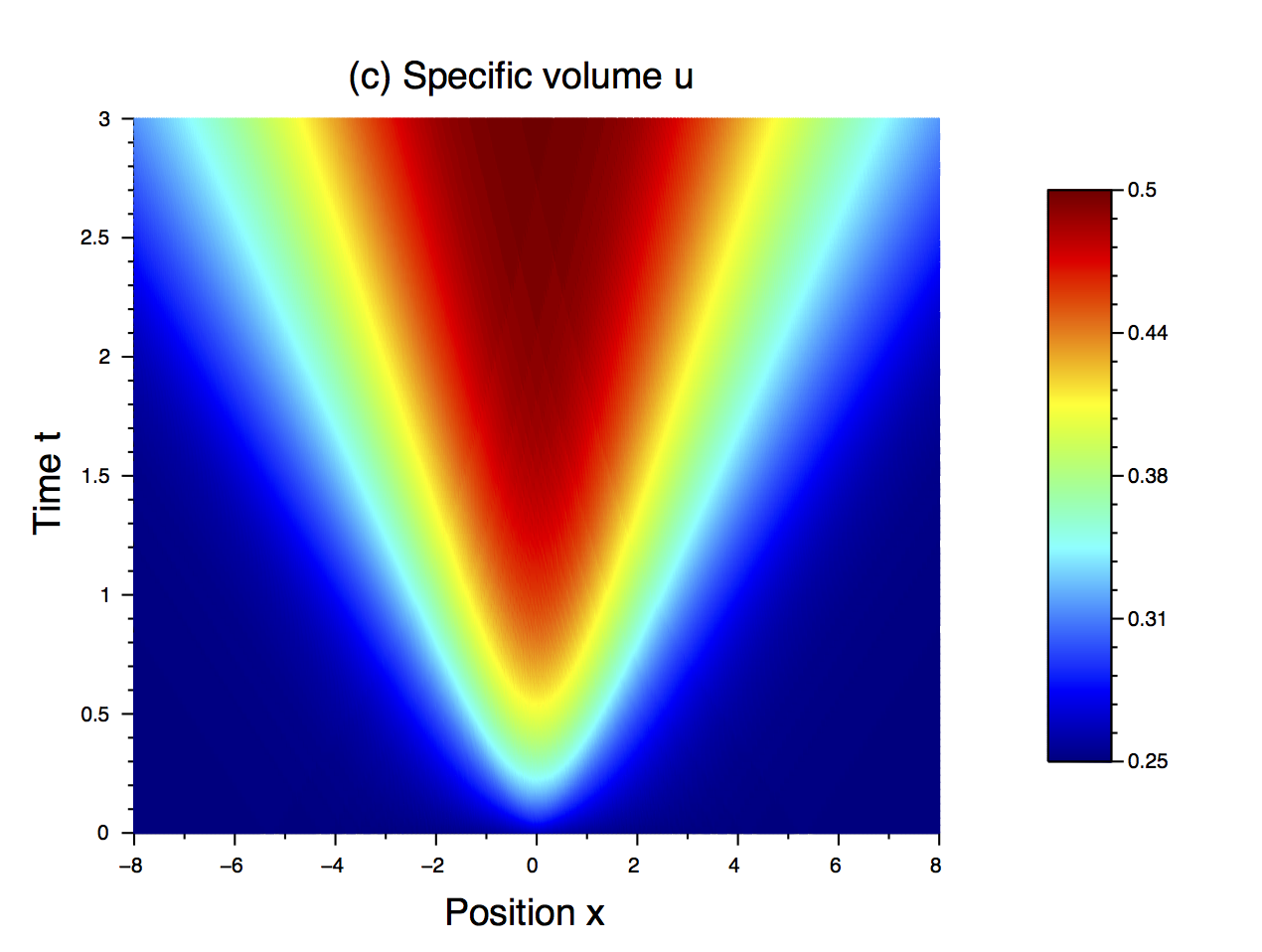}\includegraphics[clip=true,trim=.5cm 0cm 1cm 1cm,width=7.5cm]{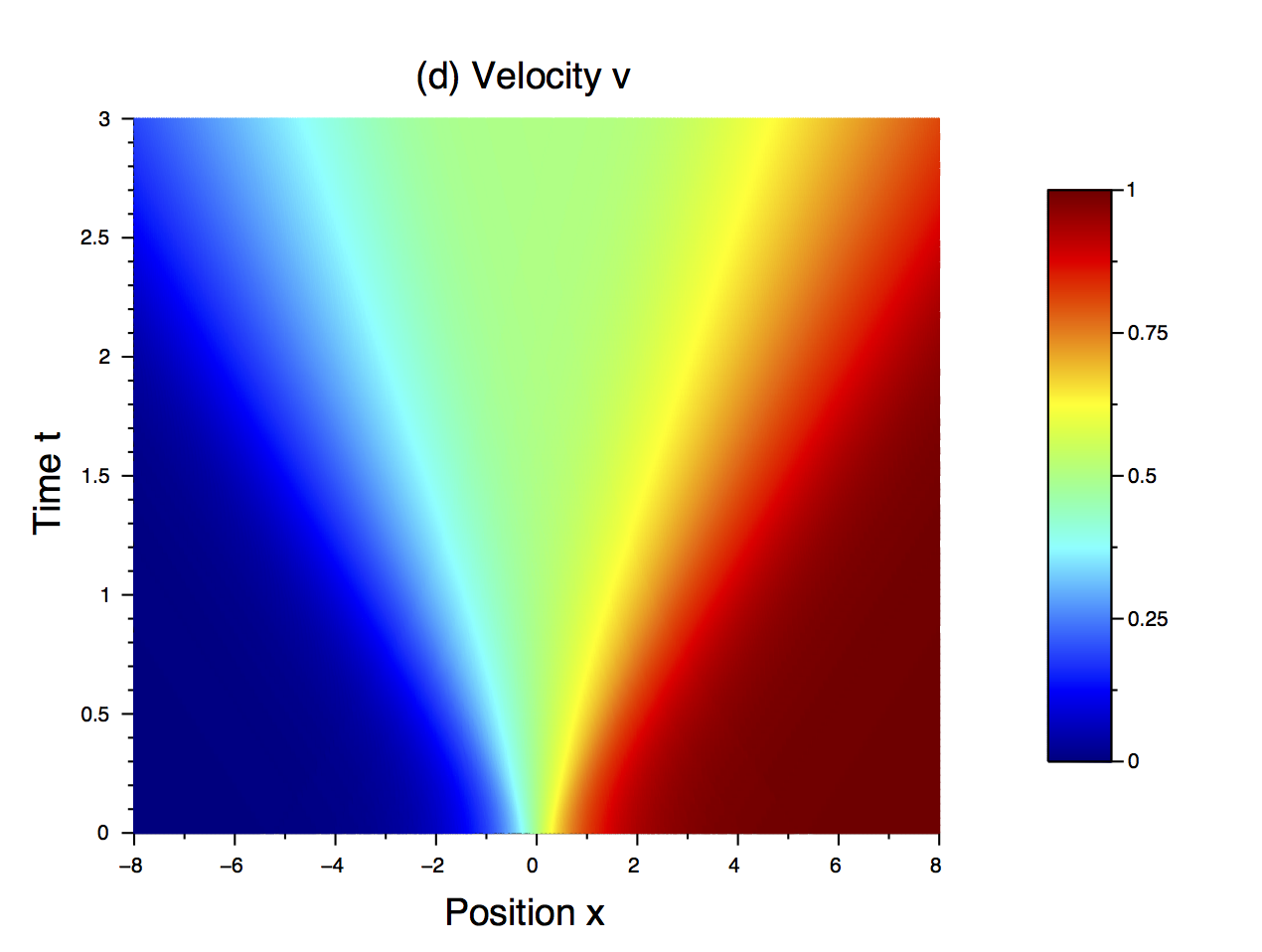}
  \caption{Representation in the space-time plane of the Riemann invariants $w^-$ (a) and $w^+$ (b), of the specific volume $u$ (c) and of the velocity $v$ (d) for the $p$-system with $w^+_0 \geq w^-_0$. The simulation uses the iterated TSPD algorithm with parameters $n=200$ and $\Delta=0.03$, and $\nu=0.5$, $\kappa=5$.}
  \label{fig:w:1}
\end{figure}

As a sticky particle dynamics where particles never stick together may seem a little disappointing, we now choose initial conditions that do not satisfy the condition that $w^+_0(x) \geq w^-_0(x)$ for all $x \in \R$. To this aim, we still assume $w^-_0$ to be given by the CDF of $m^-(\dd x) = \exp(-|x-x_0|)/2$ with $x_0 \leq 0$, and take $w^+_0(x) = \ind{x \geq 0}$. Particles of both type can now aggregate into clusters, as is depicted on Figure~\ref{fig:mspd:2}. But on account of Assumption~\eqref{ass:USH}, after a finite time (that generally depends on the number of particles), the property that $w^+ \geq w^-$ is recovered and the particles start drifting away from each other again. This is also observed on Figure~\ref{fig:mspd:2}, where two clusters blow up under the effect of a collision. As a result, the functions $w^-$, $w^+$, $u$ and $v$ exhibit shocks on short times, and are essentially given by interacting rarefaction waves on longer times, see Figure~\ref{fig:w:2}.

\begin{figure}[ht]
  \includegraphics[clip=true,trim=.5cm 0cm 1cm 1cm,width=12cm]{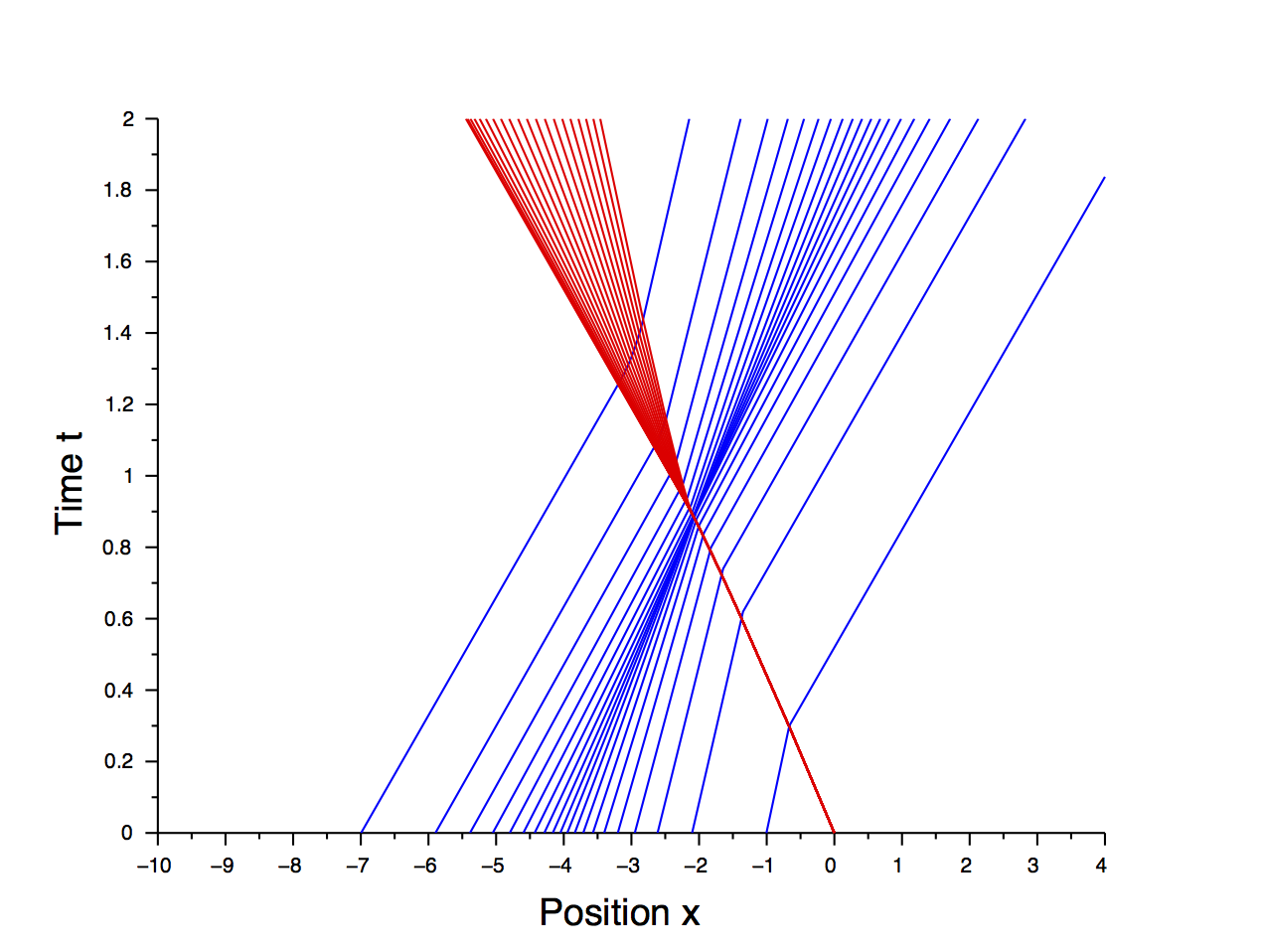}
  \caption{Trajectory of the MSPD (obtained with the iterated TSPD scheme with $\Delta=0.02$) with $20$ particles per type associated with the $p$-system for initial conditions with shocks. Blue rays correspond to particles of type $-$, red rays correspond to particles of type $+$. Red particles first remain aggregated into a single cluster up to the collision with the median blue particle, which makes it blow up then. Here $x_0=-4$ and $\nu=0.5$, $\kappa=5$.}
  \label{fig:mspd:2}
\end{figure}

\begin{figure}[ht]
  \includegraphics[clip=true,trim=.5cm 0cm 1cm 1cm,width=7.5cm]{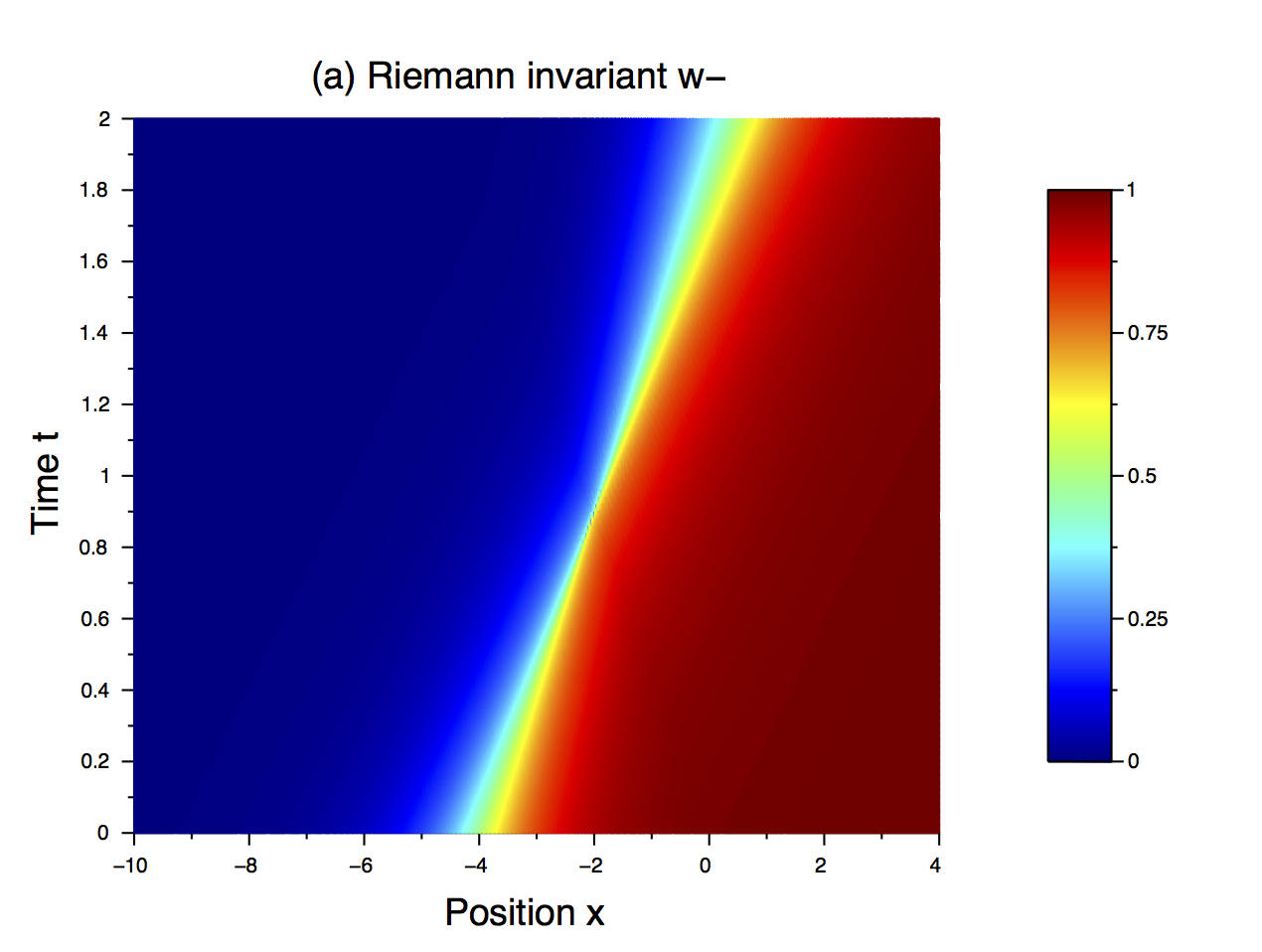}\includegraphics[clip=true,trim=.5cm 0cm 1cm 1cm,width=7.5cm]{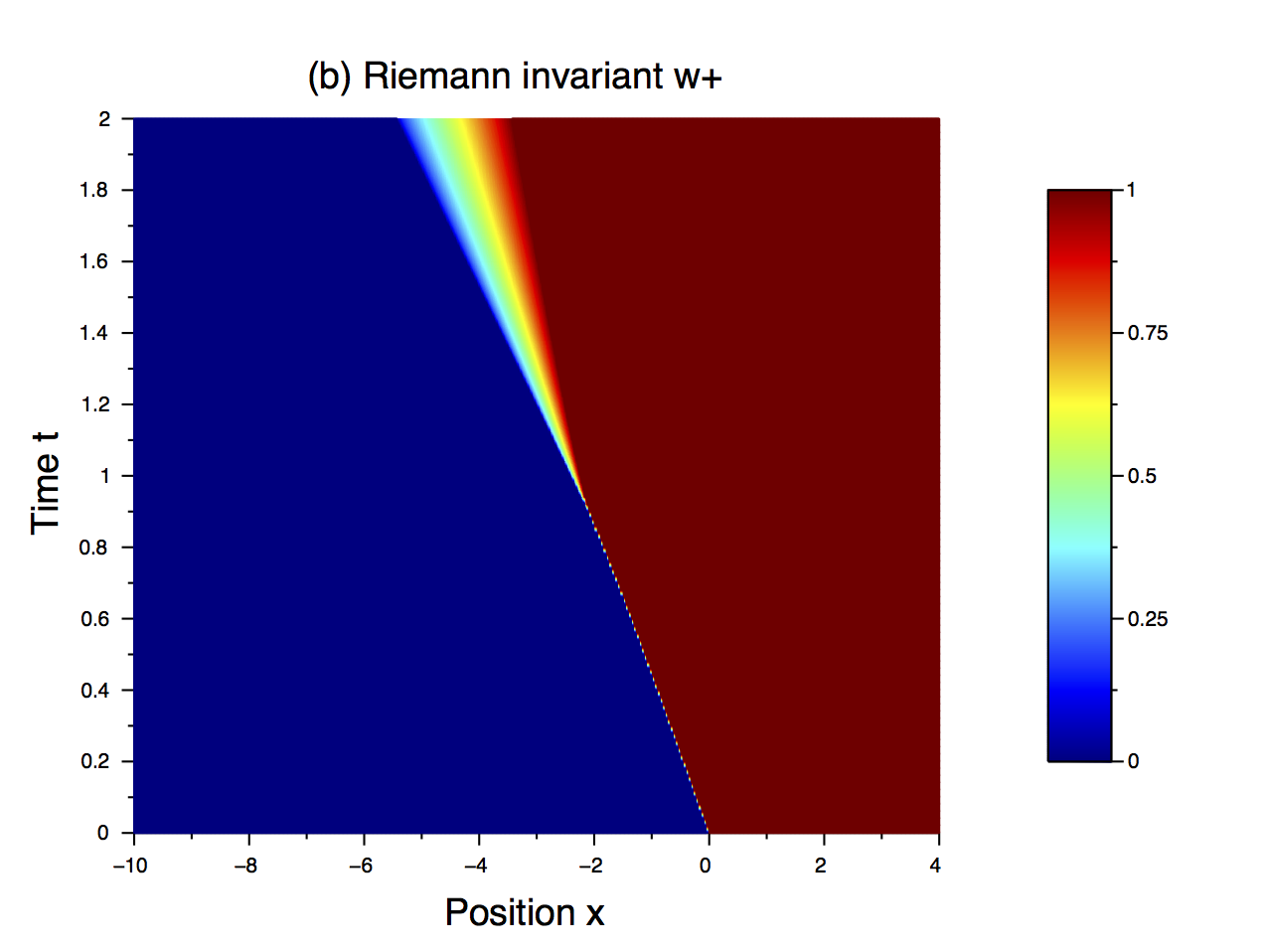}
  
  \includegraphics[clip=true,trim=.5cm 0cm 1cm 1cm,width=7.5cm]{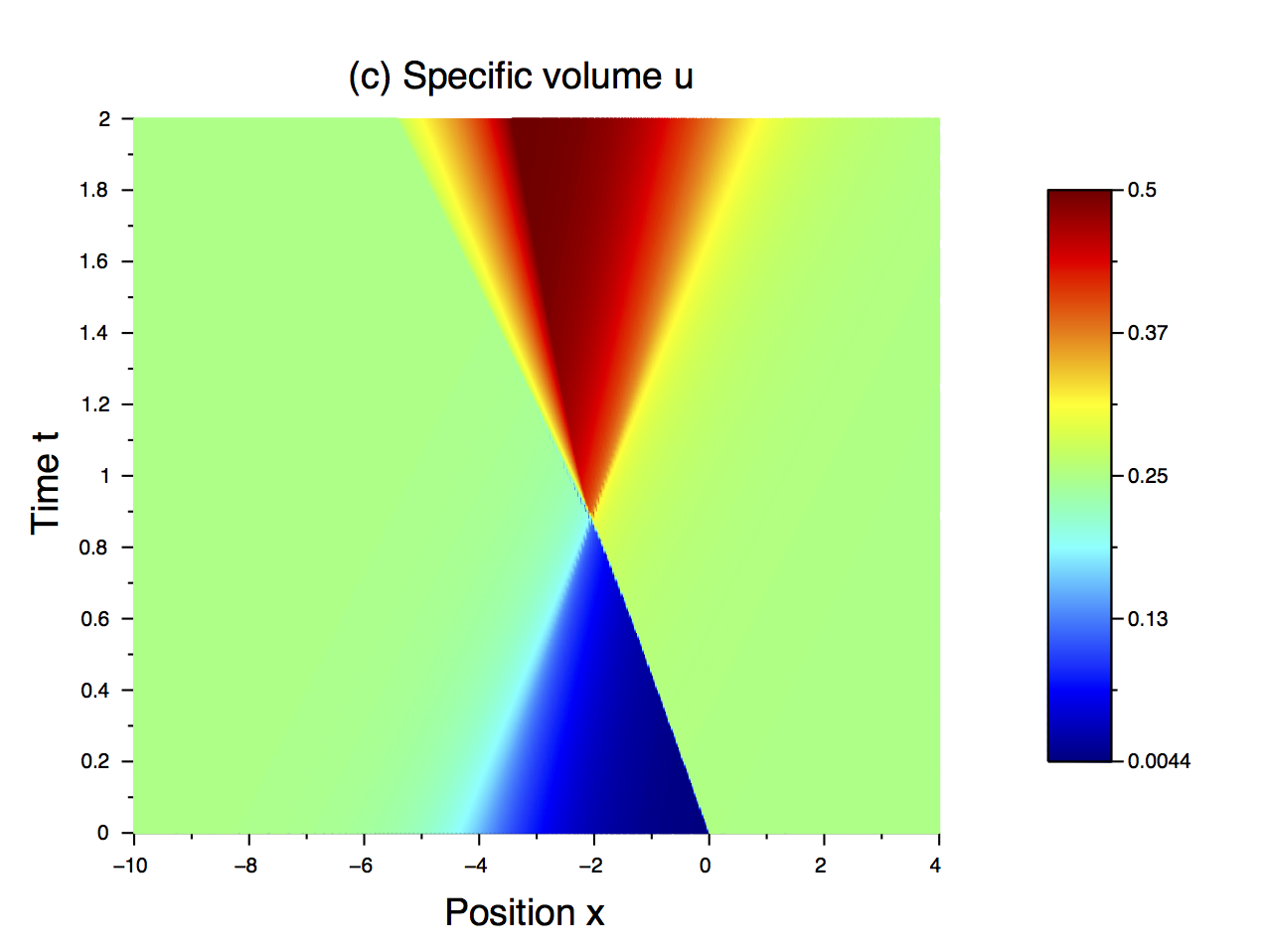}\includegraphics[clip=true,trim=.5cm 0cm 1cm 1cm,width=7.5cm]{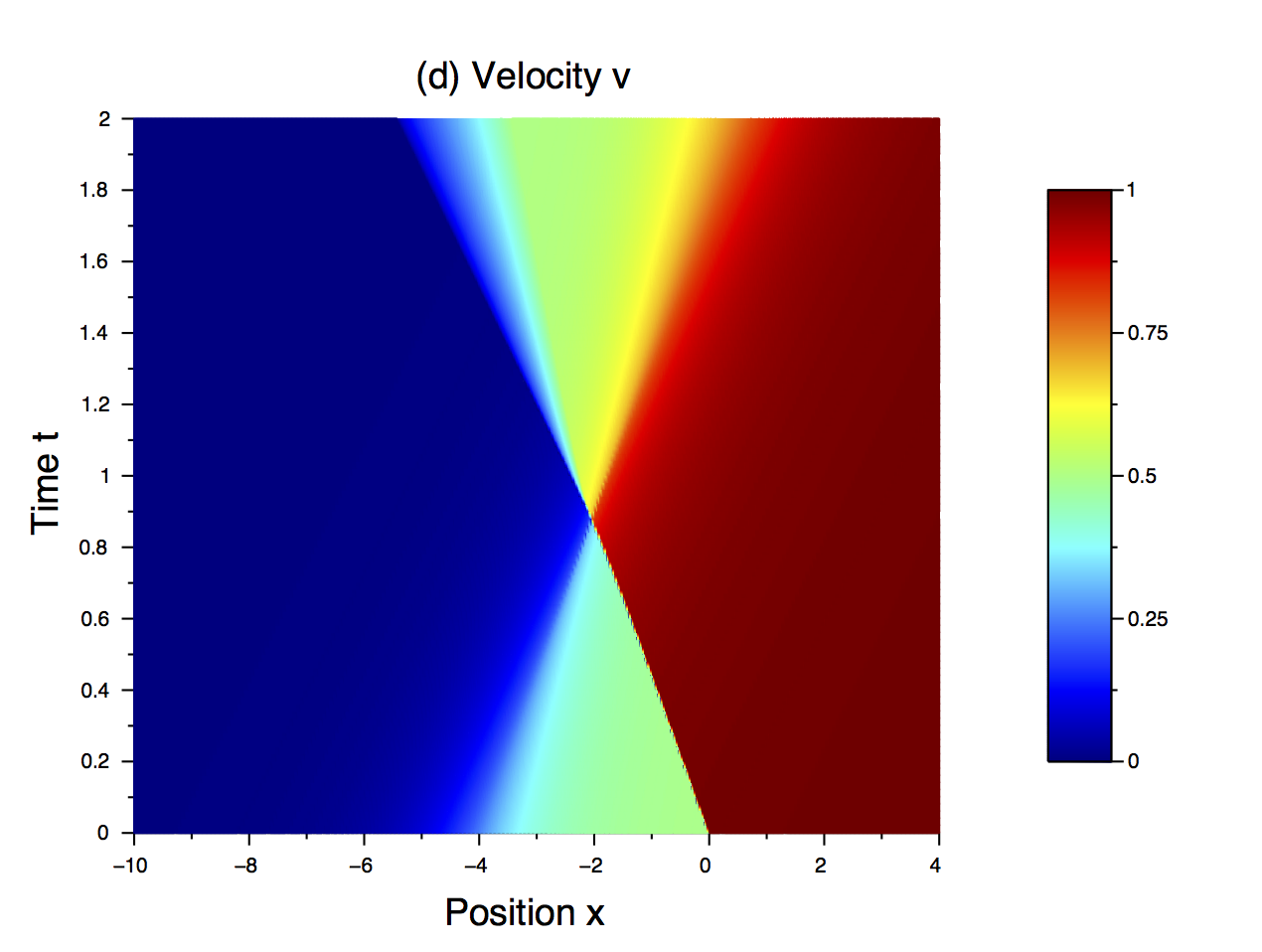}
  \caption{Representation in the space-time plane of the Riemann invariants $w^-$ (a) and $w^+$ (b), of the specific volume $u$ (c) and of the velocity $v$ (d) for initial conditions with shocks. The simulation uses the iterated TSPD algorithm with parameters $n=200$ and $\Delta=0.02$, and $\nu=0.5$, $\kappa=5$.}
  \label{fig:w:2}
\end{figure}


\appendix

\section{Proof of Proposition~\ref{prop:genstaconscal}}\label{app:pf}

Since $(t,x)\mapsto u(t+s,x)$ (resp. $(t,x)\mapsto v(t+s,x)$) is the entropy solution to \eqref{eq:scl} with initial condition $x\mapsto u(s,x)$ (resp. $x\mapsto v(s,x)$), it is enough to deal with the case $s=0$. 

We define $m$ and $m'$ in $\Ps(\R)$ by $u_0 = H*m$, $v_0 = H*m'$, and use the discretisation of $m$ and $m'$ corresponding to~\eqref{eq:chin}, namely
\begin{equation*}
  x_i(n)=(n+1)\int_{w=(2i-1)/(2(n+1))}^{(2i+1)/(2(n+1))} u_0^{-1}(w)\dd w, \qquad y_i(n)=(n+1)\int_{w=(2i-1)/(2(n+1))}^{(2i+1)/(2(n+1))} v_0^{-1}(w)\dd w,
\end{equation*}
for all $i \in \{1, \ldots, n\}$. Let $\rx(n)=(x_1(n),\ldots,x_n(n))$ and $\ry(n)=(y_1(n),\ldots,y_n(n))$. According to~\cite[Lemma~8.1.5]{jr}, $\mu_0[\rx(n)]$ (resp. $\mu_0[\ry(n)]$) converges weakly to $m$ (resp. $m'$) as $n\to+\infty$. Moreover, by~\cite[Lemma~8.1.6]{jr}, 
\begin{equation*}
  \lim_{n\to+\infty}\Ws_1(\mu_0[\rx(n)],\mu_0[\ry(n)])=\Ws_1(m,m').
\end{equation*}
By~\eqref{eq:stabSPD}, for 
\begin{equation*}
  \rblambda = n\left(\int_{w=0}^{1/n}\lambda(w)\dd w, \ldots, \int_{w=1-1/n}^{1}\lambda(w)\dd w\right)
\end{equation*}
and
\begin{equation*}
  \rbmu = n\left(\int_{w=0}^{1/n}\mu(w)\dd w, \ldots, \int_{w=1-1/n}^{1}\mu(w)\dd w\right),
\end{equation*}
one has
\begin{equation*}
  \begin{aligned}
    \|\phi[\rblambda](\rx(n);t) - \phi[\rbmu](\ry(n);t)\|_1 & \leq \Ws_1(\mu_0[\rx(n)],\mu_0[\ry(n)]) + t \sum_{i=1}^n \left|\int_{w=(i-1)/n}^{i/n}(\lambda(w)-\mu(w))\dd w\right|\\
    & \leq \Ws_1(\mu_0[\rx(n)],\mu_0[\ry(n)]) + t \int_{w=0}^1|\lambda(w)-\mu(w)|\dd w.
  \end{aligned}
\end{equation*}
One concludes by taking the limit $n\to+\infty$ in this inequality since Theorem~\ref{theo:cvSPD} and the lower semi-continuity of $\Ws_1$ with respect to the weak convergence topology~\cite[Remark~6.12]{villani} ensure that 
\begin{equation*}
  \|u(t,\cdot) - v(t,\cdot)\|_{\Ls^1(\R)} \leq \liminf_{n\to+\infty} \|\phi[\rblambda](\rx(n);t) - \phi[\rbmu](\ry(n);t)\|_1.
\end{equation*}


\subsection*{Acknowledgements} We thank our colleague R\'egis Monneau (CERMICS) for numerous fruitful discussions which motivated this work.


\end{document}